\newcommand{\R}{\mathbb{R}}
\renewcommand{\d}{\mathrm{d}}
\newcommand{\ddt}{\frac{\d}{\d t}}
\newtheorem{theorem}{Theorem}[section]
\newtheorem{proposition}{Proposition}[section]
\newtheorem{definition}{Definition}[section]
\newtheorem{lemma}{Lemma}[section]
\newtheorem{assumption}{Assumption}[section]
\newtheorem{remark}{Remark}[section]
\title{On the Fisher infinitesimal model without variability}
\author{Amic Frouvelle\thanks{CEREMADE, CNRS, Université Paris Dauphine--PSL, 75016 Paris, France.}
\thanks{The work of A.F. has been (partially) supported by the Project EFI ANR-17-CE40-0030 of the French National Research Agency.}
\footnotemark[5]
\and Cécile Taing\thanks{Laboratoire de Math\'ematiques et Applications, Universit\'e
de Poitiers, CNRS, F-86073 Poitiers, France.}
\thanks{C.T. has received funding from the PEPS JCJC 2021 and 2022 programs of the Insmi (CNRS Mathématiques).}
\thanks{Emails: frouvelle@ceremade.dauphine.fr, cecile.taing@univ-poitiers.fr}
}
\begin{document}
\maketitle

\begin{abstract}
  We study the long-time behavior of solutions to a kinetic equation inspired by a model 
  of sexual populations structured in phenotypes. The model 
  features a nonlinear integral reproduction operator derived from the Fisher infinitesimal operator
  and a trait-dependent selection term. The reproduction operator 
  describes here the inheritance of the mean parental traits to the offspring without variability.
  We show that, under assumptions on the growth of the selection rate,
  Dirac masses are stable around phenotypes for which the difference between the 
  selection rate and its minimum value is less than~$\frac{1}{2}$.
  Moreover, we prove the convergence in some Fourier-based distance of the centered and rescaled 
  solution to a stationary profile under some conditions on the initial moments of the solution.
\end{abstract}

\section{Introduction}

\subsection{The population model}

We study the large time behavior of solutions to a kinetic equation that is 
inspired by the modeling of population dynamics in the context of evolutionary biology. 
More precisely, this equation models the evolution of a sexual population structured by a phenotypic trait, represented 
by a continuous variable~$x\in \mathbb{R}$, that is inherited according to 
the mix of parental traits described by a collision operator.
Denoting by~$f(t,\cdot)$ the population density at time~$t \geqslant 0$ in the trait space, 
the model we are interested in is the following :
\begin{equation}
  \begin{cases}
    \partial_t f(t,x) = B_0 [f(t, \cdot)] (x)-m(x)f(t,x), \quad t \geqslant 0, \\
    f(0,x) = f^0(x),
  \end{cases}
  \label{model}
\end{equation}
where~$B_0$ is the trait inheritance operator, defined as, for~$x \in \mathbb{R}$,
\begin{equation}\label{B_operator}
  B_0 [f] (x) := \iint_{\mathbb{R}^2} \delta_0 \left( x - \frac{z_1 + z_2}{2} \right) f(z_1)
  \frac{f(z_2)}{\int_\mathbb{R} f(z') \,\d z'} \,\d z_1 \,\d z_2,
\end{equation}
with~$\delta_0$ the Dirac measure.
We interpret this last quantity as a the number of newborns with trait~$x$ per unit of time,
and thus we refer to it as the reproduction term.
With the operator~$B_0$, it is assumed that newborns inherit exactly the mean of the parental traits~$\frac12(z_1 + z_2)$.
This mixing operator also features a normalization by the total mass of the population density~$\int_{\mathbb{R}} f(z') \d z'$, to illustrate 
that the choice of a partner is made at constant rate in time, uniformly among the whole population.
We assume the selection rate~$m$ to be bounded below. 

In the present work, we investigate the long time behavior of~$f$. More precisely, we prove the 
local stability of some particular Dirac masses, and the convergence of a rescaled formulation 
of~$f$ to a stationary state with a measure-adapted Fourier distance.

\bigskip

The model~\eqref{model} is motivated by works on the Fisher infinitesimal model~\cite{Bart-Ethr-Vebe-2017,fisher1919}, 
with the following operator~$B_\varepsilon$ :
\begin{equation*}
  B_\varepsilon [f] (x) := \frac{1}{\varepsilon \sqrt{\pi}}\iint_{\mathbb{R}^2} \exp \left[ -\frac{1}{\varepsilon^2} \left(x - \frac{z_1 + z_2}{2} \right)^2 \right] f(z_1)
  \frac{f(z_2)}{\int_\mathbb{R} f(z') \,\d z'} \,\d z_1 \,\d z_2.
\end{equation*}
which describes the offspring traits as normally distributed around the mean of the parental traits 
with small variance of order~$\varepsilon^2$. This model has been used in theoretical 
evolutionary biology~\cite{bulmer}.
From a mathematical point of view, the Fisher infinitesimal operator appears in the works~\cite{MirRao,Rao2017}, 
where a different scaling than the small variance is used, and a spatial structure is added
to study invasions. The authors 
show a derivation of the Kirkpatrick-Barton system~\cite{KirBar} at a limit of a large reproduction rate.
Also the infinitesimal operator is included in a selection-competition model in~\cite{dekens-2021} with a population evolving between
two habitats and a regime of small phenotypic variance compared to the environment heterogeneity.
In~\cite{Raoul-2021}, the Fisher infinitesimal operator is combined with a multiplicative selection one.
The author showed local uniqueness of a steady state and exponential convergence of the solution
for weak selection effects, thanks to a contraction property of the reproduction operator for the~2-Wasserstein distance,
under the assumption of a compactly supported selection rate.
A time-discrete version of the Fisher infinitesimal model, with a quadratic selection rate and 
non-overlapping generations, is analyzed in~\cite{Cal-Lep-Poy-2021,Cal-Poy-San-2023},
within a framework other than the Wasserstein one.

The study of the asymptotic behavior of the Fisher infinitesimal model 
with an appropriate time scaling has been tackled in~\cite{calvez2018asymptotic, patout2020cauchy}. The authors
study the concentration of the population distribution around some particular traits. More precisely, 
in the first work the authors studied the special stationary states at a regime of small variance, while in the 
second one the author considered the associated Cauchy problem in the same regime, showing that solutions 
can be approximated by Gaussian profiles with small variance. The asymptotic framework was built on the spirit 
of~\cite{Die-Jab-Mis-Per-2005} with small variance limit for asexual population models.

Also, similar operators as~$B_\varepsilon$ have been studied in different contexts : alignment~\cite{Deg-Fro-Rao-2014, Ayo-Bru-Thi-2023}, protein exchanges~\cite{MagRao2015}, among other works.
Mathematical models of sexual populations have recently received some attention, with variations 
of the reproduction term, to account for mating preferences~\cite{Cor-et-al-2018,Lem-2018},
asymmetrical inheritance~\cite{Per-Stru-Tai-2022} and allelic structure~\cite{Col-Mel-Met-2013, Dek-Mir-2021}.

\bigskip

In the present study, we examine a version of the Fisher infinitesimal model with no variance. 
This model could be obtained through asymptotics different from those providing the Fisher infinitesimal one, 
using the law of large numbers instead of 
the central limit theorem in the number of loci.
The main ingredients of the analysis, which come from kinetic theory, are the 
estimation on the evolution of moments and the Fourier distance for probability measures. 
Indeed, the reproduction operator~$B_0$ has already been considered 
in ~\cite{Bis-Car-Tos-2005, Dur-Mat-Tos-2009, Mat-Tos-2008, Par-Tos-2006,Pul-Tos-2004} for kinetic models with dissipative collisions, though conserving the total mass. These models describe, with more general collision weights, the time behavior 
of granular gases or of wealth distribution among agents in an econometric context. In the latter case, the selection rate 
could describe a wealth-dependent involvement in trades.
The common feature of these models is the non-conservation of energy entailed 
by the collisional interactions. It has been proved that solutions to these models, after being 
rescaled according to the non-conserved energy, converge to limit profiles with overpopulated tails 
(except for very specific cases where we can observe thin tails) in the sense of Fourier distances.

More recently, the Fourier 
distance has been used in \cite{Ayo-Bru-Thi-2023} to prove Dirac concentration in a related alignment model.
Other distances have been proved to be non-expanding for measure solutions 
to structured populations PDE models, as Monge-Kantorovich type ones (see \cite{Fou-Per-2020,Fou-Per-2021}).

The idea of using the Fourier transform for Boltzmann-type equations goes back 
to the works of A. V. Bobylev in~\cite{Bob-1975,Bob-1988} and the resulting equation has also been studied in~\cite{Pulvirenti-thesis}.
Then, the Fourier distance has been employed in~\cite{Gab-Tos-Wen-1995} 
to investigate the trend to equilibrium of the
solutions to the Boltzmann equation for Maxwell molecules.
Indeed, by the use of the Fourier transform of the collision operators, the equation on~$\widehat{f}$ becomes simpler than 
the one for~$f$. This is also the case here for the operator~$B_0$ defined in~\eqref{B_operator} since we have, when~$f$ is a probability measure, 
\begin{equation*}
  \widehat{B_0(f)}(\xi)=\widehat{f}(\xi/2)^2,
\end{equation*}
which makes the use of Fourier metrics possible. 
In the mentioned works on kinetic models for granular gases or wealth distribution, 
the proof of the convergence of the solutions strongly relies, on the one hand, on a contraction 
property of the collision operator in the Fourier distance, and on the other hand,
on the estimation of some moments of the solutions.
One of the main useful properties of these models is that, even though the energy (or center of mass) is not conserved by the evolution, its behaviour is explicitly given by an exponential function of time, the rate of which is given by the parameters of the model, independently of the initial condition.

Though our trait mixing operator~$B_0$ is similar to the gain part of the collision operator in the works mentioned 
above, our model mainly differs in the selection component $m(x)$. This component is described by a trait-dependent mortality rate and is 
considered as being a constraint on the phenotypic variability of the population. When this selection component~$m(x)$ is constant, the model exactly corresponds to one of the models of~\cite{Par-Tos-2006}, therefore if the population is concentrated around a location where the selection does not vary significantly, we may expect to obtain similar results. However, the heterogeneity of this selection component strongly affects the properties 
of the solutions : there is no more conservation of mass, neither of the center of mass, and the evolution equations for the moments are not closed (in contrast to~\cite{Par-Tos-2006} for which they are explicitly computable). For instance, we cannot easily compare two different rescaled solutions, since their time rescaling (depending on the variance) are not explicitly the same.  
In the present work, we take the same approach as in the previous works with, in addition, the 
estimation of quantities derived from the selection rate, which contain the main difficulties of the consequent analysis.

The main challenge we are faced to is to obtain a criterion of stability which allows 
to prove the convergence to a concentrated profile (in contrast to~\cite{Par-Tos-2006} for which the convergence is global). Once the convergence is established, 
we may then expect the model to behave as if the selection rate was constant, and the 
method using Fourier metrics as in~\cite{Par-Tos-2006} can indeed be adapted to prove that the rescaled profile 
converges to the same heavy-tailed distribution. We are able to prove the local stability 
in Wasserstein metric of Dirac masses located at positions for which the difference 
between the selection rate and its minimum is less than~$\frac12$. This has to 
be compared with the results of~\cite{patout2020cauchy} and of the recent preprint \cite{Gue-Hil-Mir-2023} recovering the same results using a moment-based approach, similar to ours (for the Fisher infinitesimal model with selection, with small~$\varepsilon>0$). In these works, one of the criterion of stability is that the initial profile is close to a gaussian profile of variance~$\varepsilon^2$, centered at a point for which the difference between the selection rate and its minimum is less than~$1$. We find suprising that we do not obtain the same difference of selection rate, as this threshold of~$\frac12$ seems natural in the case~$\varepsilon=0$, in view of the estimates we can obtain in the differential inequalities on the moments of the solution, indicating that the basin of attraction may become smaller as the difference with the minimum of the selection rate approaches~$\frac12$. The investigation of the possibility of instability above this threshold of $\frac12$, and the comparison between the cases~$\varepsilon>0$ and~$\varepsilon=0$ are left to future work.

\bigskip

The work is organized as follows. In Section~\ref{section-general-results}, we establish the existence and uniqueness of 
the solution of~\eqref{model} in the sense of measures, we also define the moments of the centered and normalized solution and 
state some properties that they satisfy. Finally, we also derive the equation on the Fourier transform of the centered and rescaled
formulation of the solution. In Section~\ref{section-moments}, we provide a local stability result 
of Dirac masses and derive estimates 
on the exponential decay in time of the moments. In Section~\ref{section-4}, we prove the 
long time convergence of the centered and rescaled solution to a stationary self-similar profile.

\subsection{Assumptions and main results}

We outline in this section the assumptions we use and the main results that we prove throughout the present work.

\bigskip

First, we state the existence and uniqueness of the solution to~\eqref{model} in the sense of measures. 
Denoting~$\mathcal{M}_+$ the set of nonnegative Borel measures with finite mass, we use the following 
definition of a measure solution.
\begin{definition}\label{def-weak-solution}
  If~$f^0\in\mathcal{M}_+$, we say that~$f\in C([0,T],\mathcal{M}_+)$ is a weak solution to the model~\eqref{model} if, for any Borel set~$A\subset\mathbb{R}$, we have for all~$t\in[0,T]$:
  \begin{equation}
    \label{eq-weak-solution}
    \int_Af(t,x)\d x=\int_Ae^{-m(x)t}f^0(x)\d x + \int_0^t\int_Ae^{-m(x)(t-s)}B_0[f(s,\cdot)](x)\d x\, \d s.
  \end{equation}
\end{definition}

The existence and uniqueness result is given by the following theorem that we prove in Section~\ref{section-exist-uniq}.
\begin{theorem}\label{thm-existence-uniqueness}
  If~$m$ is a measurable function and is bounded below, then for any~$f^0$ in~$\mathcal{M}_+$, and for any~$T>0$, there exists a unique weak solution~$f\in C([0,T],\mathcal{M}_+)$ to the model~\eqref{model}, in the sense of Definition~\ref{def-weak-solution}.
\end{theorem}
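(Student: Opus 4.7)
The plan is to construct the solution by Picard iteration on the Duhamel integral equation~\eqref{eq-weak-solution}, viewed as a fixed-point problem $f=\Phi[f]$ on a suitable subset of $C([0,T],\mathcal{M}_+)$ endowed with the total variation norm. The trivial case $f^0=0$ yields $f\equiv 0$, so assume $|f^0|>0$. Writing $-M$ for a lower bound of $m$, two elementary facts underlie the argument: the multiplicative operator $\mu\mapsto e^{-m(\cdot)t}\mu$ has operator norm at most $e^{Mt}$ in total variation, and $B_0[\mu]$ is a well-defined nonnegative measure of total mass $|\mu|$, namely the image of $\mu\otimes\mu/|\mu|$ under the map $(z_1,z_2)\mapsto(z_1+z_2)/2$.

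First I would establish a priori bounds that control the mass $n(t):=|f(t)|$ along any Picard iterate (and along any weak solution). On the one hand, $|\Phi[f](t)|\leq|f^0|e^{Mt}+\int_0^te^{M(t-s)}|f(s)|\,\d s$ together with Gronwall's lemma gives $n(t)\leq|f^0|e^{(M+1)t}$. On the other hand, discarding the nonnegative $B_0$-term in the Duhamel formula provides the strictly positive lower bound $n(t)\geq\int_\R e^{-m(x)t}\,\d f^0(x)=:\alpha(t)$, with $\alpha$ continuous and positive on $[0,T]$, hence bounded below by some $\alpha_T>0$. These two bounds pin down an invariant set $X_T:=\{f\in C([0,T],\mathcal{M}_+):\alpha_T\leq|f(t)|\leq R\}$ for $R=2|f^0|e^{(M+1)T}$ and $T$ small enough, on which the normalization $1/|f|$ appearing in $B_0$ is harmless.

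The core analytic step is the Lipschitz estimate
\begin{equation*}
\|B_0[f]-B_0[g]\|_{TV}\leq\frac{CR}{\alpha_T}\|f-g\|_{TV},\qquad f,g\in X_T,
\end{equation*}
which follows by writing $B_0[f]=\tilde B_0[f,f]/|f|$, where $\tilde B_0$ is the symmetric bilinear pushforward satisfying $\|\tilde B_0[\mu,\nu]\|_{TV}\leq\|\mu\|_{TV}\|\nu\|_{TV}$, and expanding the difference. Equipping $X_T$ with the weighted norm $\|f\|_\lambda:=\sup_{t\in[0,T]}e^{-\lambda t}\|f(t)\|_{TV}$, the Duhamel formula combined with this estimate yields $\|\Phi[f]-\Phi[g]\|_\lambda\leq\frac{CR}{\alpha_T(\lambda-M)}\|f-g\|_\lambda$ for $\lambda>M$, so $\lambda$ chosen large enough turns $\Phi$ into a strict contraction. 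The Banach fixed-point theorem then gives a unique weak solution on $[0,T]$, and iteration on adjacent subintervals (with the constants $R$ and $\alpha_T$ refreshed using the value of $f$ at the starting point) extends the solution to an arbitrary time horizon. Uniqueness on any interval follows from the same Lipschitz bound: any two weak solutions with identical initial datum satisfy the same a priori mass bounds, so Gronwall applied to the TV norm of their difference forces equality.

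The main obstacle is to secure a uniform strictly positive lower bound on $n(t)$ so that the division by $|f|$ in the definition of $B_0$ never degenerates. This is precisely why the Duhamel formulation is so well suited: the pointwise-in-$x$ inequality $f(t,\cdot)\geq e^{-m(\cdot)t}f^0$ is built into~\eqref{eq-weak-solution} and delivers this lower bound for free, independently of the detailed behaviour of $m$ at infinity.
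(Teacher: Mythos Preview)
Your strategy is correct, and the overall architecture (Duhamel formulation, weighted total-variation norm, Banach fixed point) matches the paper's. The genuine difference lies in the Lipschitz estimate for $B_0$. You control $\|B_0[f]-B_0[g]\|_{TV}$ by $\frac{CR}{\alpha_T}\|f-g\|_{TV}$ on an invariant set with mass confined to $[\alpha_T,R]$, which forces you to secure a lower bound on the mass, work first on a short time interval, and then concatenate. The paper instead observes that the normalization in $B_0$ makes the operator \emph{globally} $3$-Lipschitz on all of $\mathcal{M}_+$: decomposing $B_0[\nu]-B_0[\widetilde{\nu}]$ via the intermediate term $\rho^{-1}\tilde B_0[\nu,\widetilde{\nu}]$ and then $\widetilde{\rho}^{-1}\tilde B_0[\nu,\widetilde{\nu}]$, every factor $\rho$ or $\widetilde{\rho}$ in a denominator cancels against one in the numerator, yielding $\|B_0[\nu]-B_0[\widetilde{\nu}]\|_{TV}\leqslant 3\|\nu-\widetilde{\nu}\|_{TV}$ with no dependence on the masses. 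With this in hand the paper gets a contraction on $C([0,T],\mathcal{M}_+)$ for any $T$ in one shot, simply by taking $L>3e^{KT}$ in the weighted norm.

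So what you identify as ``the main obstacle''---the strictly positive lower bound on the mass---is in fact not an obstacle at all for this operator; your a~priori lower bound $|f(t)|\geqslant\alpha(t)$, while correct, is unnecessary for existence and uniqueness. Your route is the standard one for quadratic nonlinearities with a denominator and would be required if the normalization were less cooperative; the paper's route exploits the specific structure of $B_0$ and is shorter. Both are valid.
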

The proof of Theorem~\ref{thm-existence-uniqueness} relies on standard arguments of fixed point construction using 
a distance based on the total variation of a measure.

\bigskip 

Next, we focus on the study of the moments of~$f$. 
As a first step, we derive a control on the time propagation of the initial moments of~$f$,
which enables to define the moments of~$f$ at any time.
Namely, defining for any finite nonnegative measure~$\nu$, when~$\int_\mathbb{R}|x|^k\nu(x)\d x$ is finite, its moment of order~$k$ by
\begin{equation*}
  \mu_k(\nu):=\int_\mathbb{R}x^k\nu(x)\d x,
\end{equation*}
the moments of~$f(t,\cdot)$, denoted by~$\mu_k(f(t,\cdot))$ are well defined for all time as soon as they are finite initially.
Moreover, we prove that these moments are continuous and, for~$k$ even, 
that they satisfy some kind of differential inequalities,
which will be used to establish estimates on the centered moments of the normalized density, denoted by
\begin{equation*}\label{def-g}
  g(t,x) = \frac{f(t,x)}{\int_\R f(t,z)\,\d z}.
\end{equation*}

We aim to get some regularity on the centered moments of~$g$ in order to determine 
the long time behavior of its centered and standardized formulation, which we will also call self-similar profile throughout this work.
For this purpose, we add the following assumption of the selection rate~$m$.
\begin{assumption}\label{assumption-growth}
  We suppose that~$m$ is measurable and that there exist constants~$K\geqslant0$ and~$C\geqslant0$ such that for all~$x\in\mathbb{R}$, we have
  \[-K\leqslant m(x)\leqslant C(1+x^2).\]
  \end{assumption}
A first consequence of this assumption is that
the time derivative of the moment of order~$k$ of~$f$ can be expressed with the lower order moments  at any time, 
provided that the initial data has a finite moment of order~$k+2$. This implies that, in this case, the moments of~$f$
are continuously differentiable. 

Note that we do not impose the selection rate to be nonnegative.
This may be interpreted as a supplementary asexual reproduction rate favored by the environment.
Indeed, we could decompose $m(x)$ under the fom $m(x) = d(x) - b(x)$ with $b(x)$ and $d(x)$ defined 
as, respectively, birth and death rates, and assumed to be nonnegative. Thus, under the assumption 
of neglected mutations, the birth rate $b(x)$ entails a multiplicative operator, as the death rate, 
which leads to a selection rate $m(x)$ that may change sign. Assumption~\ref{assumption-growth} therefore means that we assume the birth rate to be bounded and the death rate to grow at most quadratically.

Then, we define the centered moments of~$g$ and the centered variations of~$m$ by the quantities
\begin{align}
  M_k(t)&=\int_\mathbb{R}\big(x-\overline x(t)\big)^kg(t,x)\d x,\label{def-Mk}\\
  S_k(t)&=\int_\mathbb{R}\big(x-\overline x(t)\big)^k\big(m(x)-m(\overline x(t))\big)g(t,x)\d x,\label{def-Sk}
\end{align}
with~$\overline{x} (t)$ the center of mass at time~$t$,
\begin{equation}\label{def-xbar}
  \overline{x}(t) := \int_\mathbb{R} x g(t,x)\,\d x = \frac{\mu_1(f(t,\cdot))}{\rho(t)},
\end{equation}
and we denote their initial values by~$M_k^0$,~$S_k^0$,~$\overline{x}^0$.
We obtain differential equations on the centered moments of~$g$ for any~$k\geqslant 2$, assuming that~$f^0$ has a finite moment of order~$k+2$.
In the case where~$k\geqslant4$ is even and~$f^0$ has only a moment of order~$k$, we get a differential inequality on~$M_k$.

A quantity that appears to be important in our study of these differential inequalities is the shifted deviation of~$m$ from its minimum, defined as
\begin{equation}
  \eta(x)=\inf_{\mathbb{R}}m + \frac12-m(x),
  \label{def-eta}
\end{equation}
which is a quantity not exceeding~$\frac12$ (and which is positive if the excess of mortality is less than~$\frac12$).

For instance, when~$f$ has initially a fourth moment, we obtain
\begin{align}
  \label{dtxbar}\frac{\d}{\d t}\overline{x}&=-S_1 ,\\
  \label{dtM2}\frac{\d}{\d t}M_2&=-\frac12M_2-S_2+S_0M_2,\\
  \label{dtM4}\frac{\d^+}{\d t}M_4&\leqslant-\Big(\frac38+\eta(\overline{x})-S_0\Big)M_4+\frac38M_2^2+4S_1M_3,
\end{align}
where we have written~$\frac{\d^+}{\d t}M_4(t):=\limsup_{\tau\to0,\tau>0}\frac{M_4(t+\tau)-M_4(t)}{\tau}$.

One can see that we always have~$S_2\geqslant(\eta(\overline{x})-\frac12)M_2$, and therefore the differential equation~\eqref{dtM2} for~$M_2$ gives~$\frac{\d}{\d t}M_2\leqslant-(\eta(\overline{x})-S_0)M_2$. We expect the variation~$S_0$ to be small, and we will see that a criterion for exponential decay of~$M_2$ is indeed that~$\eta(\overline{x})$ is initially positive.

  With these differential equations and inequations, we aim to derive time estimates on the centered moments~$M_k$.
To do so, we need to have some bounds concerning the regularity and growth of the selection rate, which we state in the 
following assumption, which is for instance satisfied when~$m$ is~$C^2$ with bounded second derivative, or when~$m$ is globally Lipschitz. Notice that this allows to cover the cases of smooth selection rates with quadratic growth.

  \begin{assumption} \label{assumption-local-lipschitz} We suppose that~$m$ is locally Lipschitz and satisfies the inequalities of Assumption~\ref{assumption-growth}.
  \end{assumption}

Under this assumption, we prove the stability of Dirac masses around positions~$\overline{x}^0$ for which the excess of mortality compared to its minimum value is less than~$\frac12$, that is to say~$\eta(\overline{x}^0)>0$. We show that if the center of mass at initial time is at such a position and if~$M_4^0$ is small enough (which means that the initial normalized profile~$g^0$ is close to a Dirac mass), then the center of mass converges to some limit, and the moments~$M_2$ and~$M_4$ decrease exponentially to~$0$ at long time, as stated in the following theorem.

\begin{theorem}\label{thm-stability-dirac}
  Under Assumption~\ref{assumption-local-lipschitz}, we suppose that initially we have~$\eta(\overline{x}_0)>0$. Then, for all~$\delta$ such that~$0<\delta<\eta(\overline{x}_0)$, if~$M_4^0$ is sufficiently small, we have for all~$t>0$:
  \begin{gather}
    \eta(\overline{x}(t))\geqslant \delta,\label{eta-xbar-delta0}\\
    M_2(t)\leqslant M_2^0e^{-\delta t},\label{M2-decay-delta2}\\
    M_4(t)\leqslant M_4^0e^{-\delta t}.\label{M4-decay-delta2}
  \end{gather}
  Furthermore, in that case,~$\overline{x}(t)$ converges exponentially fast towards some~$\overline{x}_\infty\in\mathbb{R}$.
  Consequently it means that if~$g$ is initially sufficiently close to a Dirac mass located at~$\overline{x}_0$ (in~$4$-Wasserstein distance), then it converges exponentially fast to a Dirac mass at some point~$\overline{x}_\infty$, and the closer~$g$ is from~$\delta_{\overline{x}_0}$, the smaller the distance between~$\overline{x}_0$ and~$\overline{x}_\infty$.
\end{theorem}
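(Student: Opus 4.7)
The plan is to run a bootstrap argument anchored on the differential (in)equalities~\eqref{dtxbar}--\eqref{dtM4}. Fix $\delta\in(0,\eta(\overline{x}_0))$ and set $\gamma:=\eta(\overline{x}_0)-\delta>0$. Let $L$ be a Lipschitz constant of $m$ on a neighborhood of $\overline{x}_0$ large enough to contain the trajectory $\overline{x}(t)$; such a neighborhood exists by Assumption~\ref{assumption-local-lipschitz}. Cauchy--Schwarz in the definitions~\eqref{def-Sk} then yields $|S_0|\leqslant L\sqrt{M_2}$ and $|S_1|\leqslant LM_2$, while the positivity of the Hankel matrix of the centered moments of $g$ (equivalently, Cauchy--Schwarz applied to $X$ and $X^2-M_2$ for a centered random variable $X$ of law $g$) gives the sharp inequality $M_3^2\leqslant M_2(M_4-M_2^2)$, which will prove essential for handling the source term in~\eqref{dtM4}.

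Let $T^*$ be the supremum of $T\geqslant 0$ on which the three bounds~\eqref{eta-xbar-delta0}--\eqref{M4-decay-delta2} hold; $T^*>0$ by continuity. The aim is to show each bound is strict on $(0,T^*)$, which by a standard continuity argument forces $T^*=\infty$. First, \eqref{dtxbar} with $|S_1|\leqslant LM_2\leqslant LM_2^0 e^{-\delta t}$ gives $|\overline{x}(t)-\overline{x}_0|\leqslant LM_2^0/\delta$. Since $M_2^0\leqslant\sqrt{M_4^0}$ by Jensen, this displacement can be made arbitrarily small by taking $M_4^0$ small, so the $L$-Lipschitz continuity of $\eta$ (inherited from $m$) yields $\eta(\overline{x}(t))\geqslant\eta(\overline{x}_0)-L^2M_2^0/\delta\geqslant\delta+\gamma/2$. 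Plugging this into~\eqref{dtM2}, together with the pointwise inequality $S_2\geqslant(\eta(\overline{x})-\tfrac12)M_2$ (which follows from $m(y)\geqslant\inf m=m(\overline{x})-\eta(\overline{x})+\tfrac12$), we get
\begin{equation*}
\frac{\d}{\d t}M_2\leqslant-\bigl(\eta(\overline{x}(t))-S_0\bigr)M_2\leqslant-(\delta+\gamma/4)M_2,
\end{equation*}
strictly improving~\eqref{M2-decay-delta2}.

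The subtle step is the $M_4$ bound, because in~\eqref{dtM4} the source $\tfrac38 M_2^2$ exactly matches the $\tfrac38 M_4$ portion of the decay when the Cauchy--Schwarz inequality $M_2^2\leqslant M_4$ is saturated (as for symmetric two-point measures around $\overline{x}$). Set $V:=M_4-M_2^2\geqslant 0$ and $b:=\eta(\overline{x})-S_0-\delta\geqslant\gamma/4>0$ under the bootstrap. Thanks to $|M_3|\leqslant\sqrt{M_2V}$ and $|S_1|\leqslant LM_2$, the desired inequality $\frac{\d^+}{\d t}(M_4 e^{\delta t})\leqslant 0$ reduces after expansion to the polynomial inequality
\begin{equation*}
\bigl(\tfrac38+b\bigr)V-4LM_2^{3/2}\sqrt V+bM_2^2\geqslant 0,
\end{equation*}
a quadratic in $\sqrt V$ with positive leading coefficient. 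Its discriminant $16L^2M_2^3-4(\tfrac38+b)bM_2^2$ is nonpositive as soon as $4L^2M_2\leqslant(\tfrac38+b)b$, which, since $b\geqslant\gamma/4>0$ and $M_2\leqslant\sqrt{M_4^0}$, holds for $M_4^0$ sufficiently small. Thus $M_4(t)e^{\delta t}$ is nonincreasing on $[0,T^*)$, the bootstrap closes, and $T^*=\infty$.

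Once the three estimates are proved on $[0,\infty)$, the pointwise bound $\bigl|\frac{\d}{\d t}\overline{x}(t)\bigr|\leqslant LM_2^0 e^{-\delta t}$ integrates to show that $\overline{x}(t)$ is Cauchy and converges exponentially fast to some $\overline{x}_\infty\in\mathbb{R}$ with $|\overline{x}_\infty-\overline{x}_0|\leqslant LM_2^0/\delta$, arbitrarily small when $M_4^0$ is. An elementary bound of the type $W_4(g(t,\cdot),\delta_{\overline{x}_\infty})^4\lesssim M_4(t)+|\overline{x}(t)-\overline{x}_\infty|^4$ then gives $4$-Wasserstein convergence, while initial proximity of $g^0$ to $\delta_{\overline{x}_0}$ in $W_4$ is equivalent to smallness of $M_4^0$. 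The main obstacle throughout is the sharpness of~\eqref{dtM4}: the crude bounds $M_2^2\leqslant M_4$ and $|M_3|\leqslant\sqrt{M_2M_4}$ alone would fail to yield decay at rate exactly $\delta$, and the Hankel moment inequality must be combined with the positive margin $b\sim\gamma$ furnished by the assumption $\eta(\overline{x}_0)>\delta$ in order to close the bootstrap.
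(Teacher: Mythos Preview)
Your overall bootstrap structure is correct, but there is a genuine gap and a misconception worth pointing out.

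\textbf{The gap: your bounds on $S_0$ and $S_1$ do not follow from Assumption~\ref{assumption-local-lipschitz}.} You write $|S_0|\leqslant L\sqrt{M_2}$ and $|S_1|\leqslant L M_2$ where $L$ is a Lipschitz constant of $m$ on a neighborhood of $\overline{x}_0$. But $S_k=\int_{\mathbb{R}}(x-\overline{x})^k\bigl(m(x)-m(\overline{x})\bigr)g(x)\,\d x$ integrates over all $x$ in the support of $g$, which may be the whole real line, so controlling $|m(x)-m(\overline{x})|$ only for $x$ near $\overline{x}_0$ is not enough. Under the assumption (local Lipschitz plus at most quadratic growth) one has instead, as the paper establishes in Lemma~\ref{lemma-assumption-m}, $|m(x)-m(\overline{x})|\leqslant\alpha(\overline{x})|x-\overline{x}|+\beta|x-\overline{x}|^2$, which yields $|S_0|\leqslant\alpha(\overline{x})\sqrt{M_2}+\beta M_2$ and $|S_1|\leqslant\alpha(\overline{x})M_2+\beta\sqrt{M_2 M_4}$. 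Your argument works verbatim if $m$ is globally Lipschitz ($\beta=0$), but as written it does not cover the quadratic-growth case the theorem claims. (There is also a circularity in choosing the neighborhood ``large enough to contain the trajectory'' before knowing the trajectory is bounded; the paper avoids this by fixing a radius $r$ from $\eta$ alone and then proving the trajectory stays inside.)

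\textbf{The misconception: the crude bounds suffice.} You assert that $M_2^2\leqslant M_4$ and $|M_3|\leqslant\sqrt{M_2M_4}$ alone would fail to close the $M_4$ estimate, and introduce the sharper Hankel inequality $M_3^2\leqslant M_2(M_4-M_2^2)$ to compensate. In fact the crude bounds are enough, and this is exactly what the paper does: from \eqref{dtM4}, the term $\tfrac38 M_2^2\leqslant\tfrac38 M_4$ cancels the $\tfrac38$ in the bracket, and using $|S_1 M_3|\leqslant(\alpha M_2+\beta\sqrt{M_2M_4})\sqrt{M_2M_4}$ together with $M_2\leqslant\sqrt{M_4}$ gives $|4S_1M_3|\leqslant(4\alpha\sqrt{M_2}+4\beta M_2)M_4$. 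Hence
\[
\frac{\d^+}{\d t}M_4\leqslant\bigl(-\eta(\overline{x})+5\alpha\sqrt{M_2}+5\beta M_2\bigr)M_4,
\]
which is $\leqslant-\delta M_4$ as soon as $\eta(\overline{x})\geqslant\delta_0>\delta$ and $M_2$ is small. Your discriminant analysis is correct but unnecessary; the ``positive margin $b$'' you exploit is already doing all the work, and the saturation case you worry about (symmetric two-point measures) is precisely the case $M_3=0$, where the $4S_1M_3$ term vanishes anyway.
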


As the solution concentrates around a point in space, we expect the behaviour of the solution to be close to the case in which the selection rate is constant. But in that case it is known that the second moment decays as~$e^{-\frac{t}{2}}$, which is much stronger that the decay~$e^{-\delta t}$ provided by Theorem~\ref{thm-stability-dirac}, when~$\delta$ is small. However, with more work, we are able to improve the rates of convergence of the moments. First, we start by refining the estimates on~$M_2$ and~$M_4$ alone thanks to~\eqref{dtM2}-\eqref{dtM4}, then we obtain better rates of convergence for higher order moments, which finally are propagated back in the differential equations for lower moments to obtain precise estimates for all moments. In the end we obtain the following improved estimates.

  \begin{proposition}\label{prop-improved-estimates-intro}
    We suppose that the assumptions of Theorem~\ref{thm-stability-dirac} are satisfied and that, for some~$k_0\geqslant2$, the initial moment~$M_{2k_0}^0$ is finite.
    
    Then for all~$k\leqslant k_0$ and~$\lambda<\min(1-\frac1{2^{2k-1}},\frac12-\frac1{2^{2k_0-1}}+\delta)$, there exists~$C_{M^0_{2k_0}}$ (converging to~$0$ when~$M^0_{2k_0}\to0$) such that for all~$t>0$, we have
    \[M_{2k}\leqslant C_{M^0_{2k_0}}e^{-\lambda t}.\]
\end{proposition}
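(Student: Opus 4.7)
\emph{Overall strategy.} The plan is to bootstrap, starting from the decay rate $\delta$ delivered by Theorem~\ref{thm-stability-dirac} for $M_2$ and $M_4$ and extended to all $M_{2k}$ with $k\leqslant k_0$ under the finite initial moment assumption, and iteratively refining toward the target rates $\min(1-1/2^{2k-1},\,1/2-1/2^{2k_0-1}+\delta)$. The two main tools are (i) the differential inequalities satisfied by each centered moment $M_{2k}$ (derived as in \eqref{dtM2}--\eqref{dtM4}) and (ii) the local Lipschitz regularity of $m$ granted by Assumption~\ref{assumption-local-lipschitz}, which I would use to bound the selection remainders $S_j$ sharply by higher centered moments.

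\emph{Differential inequality for $M_{2k}$.} Generalising the derivation of \eqref{dtM2}--\eqref{dtM4}, for every even order $2k\leqslant 2k_0$ I would first establish a bound of the form
\[
\tfrac{\d^+}{\d t} M_{2k} \leqslant -\bigl(1-\tfrac1{2^{2k-1}}\bigr) M_{2k} + \tfrac{1}{2^{2k}}\sum_{j=2}^{2k-2}\binom{2k}{j} M_j M_{2k-j} - S_{2k} + S_0 M_{2k} + 2k\, S_1 M_{2k-1},
\]
the reproduction contribution being read off the identity $\widehat{B_0 f}(\xi)=\widehat f(\xi/2)^2$, and the remaining terms coming from the selection and from the centering correction $\tfrac{\d}{\d t}\overline{x}=-S_1$. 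The pessimistic inequality $S_{2k}\geqslant(\eta(\overline{x})-\tfrac12)M_{2k}$ then embeds into an effective self-decay coefficient at least $\tfrac12-\tfrac1{2^{2k-1}}+\delta-\varepsilon$ for $t$ large enough.

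\emph{Bootstrapping upward, then propagating back.} Combining Cauchy--Schwarz ($M_{2k-1}\leqslant\sqrt{M_{2k-2}M_{2k}}$) with the Lipschitz bounds $|S_1|\leqslant L M_2$ and $|S_{2k}|\leqslant L M_{2k+1}\leqslant L\sqrt{M_{2k}M_{2k+2}}$, every source term in the above inequality decays strictly faster than $M_{2k}$ itself at the current iteration's rate, so a Gronwall argument upgrades the rate. Iterating upward to $k=k_0$ and using at the top level the pessimistic bound on $S_{2k_0}$ (since $M_{2k_0+2}$ is not under control) would yield $M_{2k_0}\leqslant Ce^{-\lambda t}$ for every $\lambda<1/2-1/2^{2k_0-1}+\delta$. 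Going back down from $k=k_0-1$ to $k=1$, for each $k<k_0$ I would instead use the sharp bound $|S_{2k}|\leqslant L\sqrt{M_{2k}M_{2k+2}}$ together with the already-established rate $\lambda_{2k+2}$ of $M_{2k+2}$, and conclude by Gronwall that $M_{2k}\leqslant Ce^{-\lambda t}$ for every $\lambda<\min(1-1/2^{2k-1},\lambda_{2k+2})$, yielding inductively the target bound.

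\emph{Main obstacle.} The most delicate point will be the handling of $S_j$ when $m$ is only locally Lipschitz: the bound $|m(x)-m(\overline{x})|\leqslant L|x-\overline{x}|$ holds only on a neighbourhood of $\overline{x}$, so outside a bounded set the quadratic growth from Assumption~\ref{assumption-growth} must be used to absorb the tails against higher centered moments (themselves controlled inductively from the previous iteration, which remain close to $\overline{x}_\infty$ thanks to Theorem~\ref{thm-stability-dirac}). A secondary concern is verifying that the constants arising from the successive Gronwall steps remain proportional to $M_{2k_0}^0$ throughout the whole bootstrap, so that $C_{M_{2k_0}^0}\to 0$ as $M_{2k_0}^0\to 0$, as claimed.
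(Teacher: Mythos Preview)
Your proposal is correct and follows essentially the same route as the paper. The paper organises the argument into three steps: first a coupled iteration between the inequalities for $M_2$ and $M_4$ alone (doubling the rate from $\delta$ until reaching $[\min(\tfrac12,\tfrac38+\delta)]^-$ and $[\tfrac38+\delta]^-$), then an upward induction to $k_0$ using the pessimistic bound on $S_{2k}$ exactly as you describe, and finally the downward pass using the sharp bound on $S_{2k}$ in terms of $M_{2k+2}$ (the paper uses Young's inequality $|S_{2k}|\leqslant\varepsilon M_{2k}+C_\varepsilon M_{2k+2}$ rather than your Cauchy--Schwarz form, but the effect is the same). Your ``main obstacle'' is resolved in the paper by a clean global estimate $|m(x)-m(y)|\leqslant\alpha(y)|x-y|+\beta|x-y|^2$ with $\alpha$ locally bounded and $\beta$ constant, which absorbs the quadratic tail once and for all and yields bounds such as $|S_1|\leqslant\alpha(\overline x)M_2+\beta\sqrt{M_2M_4}$; this is precisely the mechanism you anticipate. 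The tracking of constants is handled via a dedicated ``decay controlled by parameter $\sigma$'' framework and a Gr\"onwall lemma that preserves the vanishing of constants as $\sigma\to0$.
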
 
Notice that here, even for a~$\delta$ very small, if a sufficiently high initial moment is finite, then we recover the decay as~$e^{-\frac{t}2}$ for the second moment~$M_2$.

\bigskip

Now that we know that this second moment decays as in the case of constant selection rate, we can go further by studying
the Fourier transform of the centered and rescaled formulation of~$g$ and then characterize 
its convergence to a limit profile at long time, as announced in the beginning of this work.

Indeed, in order to investigate the long time behavior of the solution to~\eqref{model}, we make use of the following Fourier distance
\begin{equation}\label{def-fourier-distance}
d_s(\gamma_1,\gamma_2) := \sup_{\xi\neq0}\frac{|\widehat{\gamma_1}(\xi)-\widehat{\gamma_2}(\xi)|}{|\xi|^s},
\end{equation}
which is defined when~$\gamma_1$ and~$\gamma_2$ are some measures that have same moments up to order~$p \in \mathbb{N}$, with respective Fourier transforms~$\widehat{\gamma_1}$ and~$\widehat{\gamma_2}$,
and~$s\in(p,p+1]$ to be determined. 

For all~$\xi \in \R$, we can define the Fourier transform of~$f$ solution to~\eqref{model} on the trait variable
\begin{equation*}
  \widehat{f}(t, \xi):=\widehat{f(t, \cdot)} (\xi) = \int_\R e^{-i\xi x} f(t,x)\d x.
\end{equation*}
Thus, to apply the Fourier distance~\eqref{def-fourier-distance} on measures that have same moments up to order 2,
we define~$\gamma(t, \cdot)$ as the rescaled and centered formulation of~$g(t, \cdot)$, or associated self-similar profile,
\begin{equation}\label{def-gamma}
  \gamma(t,x) := \sqrt{M_2(t)} \, g\left(t, \sqrt{M_2(t)}x + \overline x (t)\right).
\end{equation}
Then, we obtain that the Fourier transform satisfies the equation 
\begin{equation*}
  \partial_t\widehat{\gamma}(t,\xi)=\widehat{\gamma}(t,\tfrac{\xi}2)^2-\widehat{\gamma}(t,\xi)+\frac14\xi\partial_\xi\widehat{\gamma}(t,\xi)+R(t,\xi),
\end{equation*}
in which the term~$R(t,\xi)$ comprises all the quantities generated by the selection rate~$m$. Thus,~$R$
depends on some of the moments~$M_k$ and~$S_k$, because of the formulation of~$\gamma$ and of the equations~\eqref{dtxbar} and~\eqref{dtM2}.

So, this last equation on~$\widehat{\gamma}$ is divided into an operator applied only on~$\widehat{\gamma}$, which comes from 
the Fourier formulation of the reproduction operator~$B_0$ for probability distributions, and the remaining term~$R$ that 
we expect to become small at long time, because of the stability result on the center of mass of~$g$ and  
the estimates on the moments stated in Theorem~\ref{thm-stability-dirac}.

In fact, it appears that the estimates on~$M_k$ obtained at this point, even with the improvements provided by Proposition~\ref{prop-improved-estimates-intro}, are still not sufficient to control the remaining term~$R$.  
More precisely, we need to specifically quantify the exponential decay of the moment~$M_2$ by providing a lower bound, and
also to derive a control on the ratios~$\frac{M_{4}}{M_2}$ and~$\frac{M_6}{M_2}$. To this aim, we make a stronger assumption on the initial condition, by starting with an initial profile~$g^0$ such that~$\frac{M^0_{2k_0}}{M^0_2}$ is small (for some~$k_0$ large enough). This is not necessarily the case for any profile close to a Dirac mass, but if we take any profile centered around~$\overline{x}^0$ (with finite moment of order~$2k_0$) and we shrink it by some large factor, we satisfy this assumption. In this framework we are able to obtain precise rates of convergence, as follows.

\begin{theorem}\label{thm-moments-decay}
  Under the assumptions of Theorem~\ref{thm-stability-dirac}, we take~$k_0\geqslant2$ such that~$\frac1{2^{2k_0-1}}<\delta$, and we suppose that~$\frac{M_{2k_0}^0}{M_2^0}$ is sufficiently small. Then,
  \begin{itemize}
  \item there exists~$C_{k_0}\big(\frac{M_{2k_0}^0}{M_2^0}\big)>0$, converging to~$1$ as~$\frac{M_{2k_0}^0}{M_2^0}\to0$ such that
    \[M_2(t)\geqslant C_{k_0}\big(\tfrac{M_{2k_0}^0}{M_2^0}\big)M_2^0\,e^{-\frac{t}2},\]
  \item for all~$k\geqslant2$ (with~$k\leqslant k_0$) and~$\lambda<\min(\frac12-\frac1{2^{2k-1}},\delta-\frac1{2^{2k_0-1}})$, there exists~$\widetilde{C}_{\lambda,k}\big(\frac{M_{2k_0}^0}{M_2^0}\big)>0$, converging to~$0$ as~$\frac{M_{2k_0}^0}{M_2^0}\to0$ such that
    \[\frac{M_{2k}(t)}{M_2(t)}\leqslant\widetilde{C}_{\lambda,k}\big(\tfrac{M_{2k_0}^0}{M_2^0}\big)\,e^{-\lambda t}.\]
  \end{itemize}
\end{theorem}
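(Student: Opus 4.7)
The plan is to combine the upper bounds on moments from Proposition~\ref{prop-improved-estimates-intro} with a new lower bound on $M_2$ obtained by a Duhamel analysis of~\eqref{dtM2}, the latter crucially exploiting the smallness of the initial ratio $M_{2k_0}^0/M_2^0$. Once both pieces are in hand, the ratio bound $M_{2k}/M_2\leqslant\widetilde{C}e^{-\lambda t}$ follows immediately by a direct division.

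I would begin by recasting~\eqref{dtM2} into Duhamel form,
\[e^{t/2}M_2(t) = M_2^0 + \int_0^t e^{s/2}\bigl(S_0(s)M_2(s)-S_2(s)\bigr)\,\d s,\]
and aim to show that the integral on the right-hand side is bounded in modulus by $\varepsilon M_2^0$ with $\varepsilon\to0$ as $M_{2k_0}^0/M_2^0\to0$. Under Assumption~\ref{assumption-local-lipschitz}, with $\overline{x}(t)$ kept in a bounded set thanks to $\eta(\overline{x}(t))\geqslant\delta$ (Theorem~\ref{thm-stability-dirac}), combining the local Lipschitz property of $m$ on bounded intervals with its quadratic growth at infinity yields $|S_0|\leqslant C\sqrt{M_2}$ and $|S_2|\leqslant C\bigl(\sqrt{M_2 M_4}+M_4\bigr)$, via Cauchy--Schwarz on the odd absolute moment $\int|x-\overline{x}|^3\,g\,\d x$. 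Feeding these into the Duhamel formula and using Proposition~\ref{prop-improved-estimates-intro} with $\lambda_2$ close to $\tfrac12$ and $\lambda_4$ close to $\tfrac12+\delta-\tfrac1{2^{2k_0-1}}$, the $S_0 M_2$ contribution is of order $(M_2^0)^{3/2}$ (since the integrand behaves as $e^{(1/2-3\lambda_2/2)s}$), while the $S_2$ contribution is of order $\sqrt{M_2^0\,C_{M_{2k_0}^0}}+C_{M_{2k_0}^0}$, the convergence of the integrals requiring $\lambda_2+\lambda_4>1$, i.e.\ precisely $\delta>\tfrac1{2^{2k_0-1}}$. Since the Jensen inequality $M_{2k_0}^0\geqslant (M_2^0)^{k_0}$ forces $M_2^0$ to be small as soon as the ratio is, and since $C_{M_{2k_0}^0}$ scales at least linearly with $M_{2k_0}^0$ through the forcings in the differential inequalities, the Duhamel correction is indeed a vanishing multiple of $M_2^0$, yielding the announced lower bound with constant $C_{k_0}$ tending to $1$.

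Once this lower bound is secured, for every $k\leqslant k_0$ Proposition~\ref{prop-improved-estimates-intro} provides $M_{2k}(t)\leqslant C_{M_{2k_0}^0}\,e^{-\lambda_{2k}t}$ for any $\lambda_{2k}<\min\bigl(1-\tfrac1{2^{2k-1}},\,\tfrac12+\delta-\tfrac1{2^{2k_0-1}}\bigr)$, and dividing by the lower bound gives
\[\frac{M_{2k}(t)}{M_2(t)}\leqslant\frac{C_{M_{2k_0}^0}}{c M_2^0}\,e^{-(\lambda_{2k}-1/2)t},\]
in which the exponent $\lambda_{2k}-\tfrac12<\min\bigl(\tfrac12-\tfrac1{2^{2k-1}},\,\delta-\tfrac1{2^{2k_0-1}}\bigr)$ is exactly the rate claimed, and the prefactor $C_{M_{2k_0}^0}/M_2^0$ vanishes as $M_{2k_0}^0/M_2^0\to0$.

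The main obstacle is the apparent circularity between the lower bound on $M_2$ and the control of $|S_2|/M_2$, since any pointwise control of the latter would itself require a lower bound on $M_2$. The key insight is that Proposition~\ref{prop-improved-estimates-intro} provides \emph{unconditional} upper bounds on every $M_{2k}$, so the time-weighted integrability argument can be closed without any lower bound on $M_2$. The critical quantitative threshold in this approach is $\delta>\tfrac1{2^{2k_0-1}}$: it is precisely what makes the weighted integral $\int e^{s/2}\sqrt{M_2 M_4}\,\d s$ converge, and it matches exactly the hypothesis placed on $k_0$, suggesting that this route is sharp and that a smaller $k_0$ would demand fundamentally different arguments, probably through the Fourier side of the self-similar profile $\gamma$.
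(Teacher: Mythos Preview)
Your route differs from the paper's, and the gap is real. The paper does not go through a Duhamel argument on $M_2$; it derives a differential inequality directly for the ratio $Q=M_{2k_0}/M_2$. Combining the lower bound $\frac{\d}{\d t}M_2\geqslant(-\tfrac12-\cdots)M_2$ with the upper bound on $\frac{\d^+}{\d t}M_{2k_0}$ yields
\[
\frac{\d^+}{\d t}Q\leqslant\bigl(-\delta+\tfrac{1}{2^{2k_0-1}}+\widetilde{w}(t)\bigr)Q+M_{2k_0-2},
\]
and the key trick is the H\"older inequality $M_4/M_2\leqslant Q^{1/(k_0-1)}$, which makes the coefficient $\widetilde{w}$ (built out of $\sqrt{M_2}$, $M_4/M_2$, etc.) a power of $Q$ itself. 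The inequality then closes via a nonlinear Gr\"onwall lemma, giving decay of $Q$ controlled solely by its initial value $M_{2k_0}^0/M_2^0$. Once $Q$ decays, the lower bound on $M_2$ follows from $\frac{\d}{\d t}M_2\geqslant(-\tfrac12-cQ^\theta)M_2$ by straight integration, and only then does one divide the upper bounds of Proposition~\ref{prop-improved-estimates-intro} by this lower bound.

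Your Duhamel approach needs the integral $\int_0^\infty e^{s/2}|S_2|\,\d s$ to be $o(M_2^0)$. The rate~$\delta$ from Theorem~\ref{thm-stability-dirac} is not enough here (since $\delta<\tfrac12$ the integral diverges), so you must use the improved rates of Proposition~\ref{prop-improved-estimates-intro}, and then the prefactor becomes $\sqrt{C_2C_4}$ with $C_2,C_4$ the constants from that proposition. But Proposition~\ref{prop-improved-estimates-intro} only asserts $C_{M_{2k_0}^0}\to0$ as $M_{2k_0}^0\to0$; it says nothing about $C_{M_{2k_0}^0}/M_2^0$. Your claim that the constants ``scale at least linearly with $M_{2k_0}^0$'' is unsupported and in fact wrong for $M_2$ (whose constant is of order $M_2^0$, not $M_{2k_0}^0$). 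The same issue recurs in your final step: you write $C_{M_{2k_0}^0}/M_2^0\to0$, but this is precisely the content of the theorem, not a consequence of Proposition~\ref{prop-improved-estimates-intro}. Upgrading control by $M_{2k_0}^0$ to control by the \emph{ratio} $M_{2k_0}^0/M_2^0$ is exactly what the paper's nonlinear ODE for $Q$ accomplishes, and it cannot be shortcut by division.
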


\bigskip

Finally, the last theorem states the convergence in the Fourier distance of the profile~$\gamma$ to a stationary 
profile~$\gamma_\infty$, which is identified in~\cite{Bal-Mar-Pug-2002, Car-Tos-2007},  when~$t$ goes to~$+\infty$, as
\begin{equation}\label{def-gamma-inf}
  \gamma_\infty(x)=\frac{2}{\pi(1+x^2)^2},
\end{equation}
or given by
\begin{equation*}
  \widehat{\gamma_\infty}(\xi)=(1+|\xi|)e^{-|\xi|}.
\end{equation*}

\begin{theorem}\label{th-cv-gamma}
  Under the assumptions of Theorem~\ref{thm-moments-decay}, if there exists some~$k_0\geqslant3$ such that~$\frac{1}{2^{2k_0 -1}}< \delta$ and~$\frac{M^0_{2k_0}}{M^0_2}$ is small enough, 
  then we have that, for some values of~$s \in (2,3)$, there exists a constant~$L>0$ such that 
  \begin{equation*}
   d_s(\gamma,\gamma_\infty)(t) \leqslant \big(d_s(\gamma_0,\gamma_\infty)+L\big)e^{-\lambda_s t},
  \end{equation*}
  with~$\lambda_s := 1-\frac{s}4-2^{1-s}$, which is positive for~$s \in (2,3)$. 
\end{theorem}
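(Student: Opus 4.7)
The plan is to establish a scalar differential inequality of the form $\tfrac{\d^+}{\d t}d_s(\gamma(t),\gamma_\infty)\leqslant -\lambda_s\,d_s(\gamma(t),\gamma_\infty)+\rho(t)$, in which $\rho$ collects the perturbations due to the non-constant selection rate and decays exponentially at a rate strictly larger than $\lambda_s$, so that Gronwall's lemma yields the announced bound. The structural input is that $\gamma_\infty$, with $\widehat{\gamma_\infty}(\xi)=(1+|\xi|)e^{-|\xi|}$, is a stationary solution of the reduced equation $\partial_t\widehat{\gamma}=\widehat\gamma(\xi/2)^2-\widehat\gamma+\tfrac{\xi}4\partial_\xi\widehat\gamma$ that would hold for constant selection; I would first verify this directly, and note that $\gamma(t,\cdot)$ and $\gamma_\infty$ are both centered probability measures with unit variance, so their moments agree up to order~$2$ and $d_s$ is well-defined on this pair for $s\in(2,3]$, with $d_s(\gamma_0,\gamma_\infty)<+\infty$ thanks to the assumption $M_{2k_0}^0<+\infty$.

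Setting $u=\widehat\gamma-\widehat{\gamma_\infty}$, subtracting the two equations and using the factorization $\widehat\gamma(\xi/2)^2-\widehat{\gamma_\infty}(\xi/2)^2=(\widehat\gamma+\widehat{\gamma_\infty})(\xi/2)\,u(\xi/2)$ gives $\partial_t u=(\widehat\gamma+\widehat{\gamma_\infty})(\xi/2)\,u(\xi/2)-u+\tfrac{\xi}4\partial_\xi u+R$. Using $|\widehat\gamma|,|\widehat{\gamma_\infty}|\leqslant 1$ together with the homogeneity $|u(\xi/2)|/|\xi|^s=2^{-s}|u(\xi/2)|/|\xi/2|^s\leqslant 2^{-s}d_s$, the nonlocal term contributes at most $2^{1-s}d_s$ to the supremum of $|u|/|\xi|^s$. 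Along the characteristics $\xi(t)=\xi_0 e^{-t/4}$ of the transport $\tfrac{\xi}4\partial_\xi$, dividing $u$ by $|\xi|^s$ replaces the drift and the $-u$ term by a linear contribution of coefficient $s/4-1$, so the homogeneous part produces the contraction rate $\lambda_s=1-s/4-2^{1-s}$, positive on $(2,3)$. Evaluating at a near-maximizer of $|u(t,\cdot)|/|\cdot|^s$ (or equivalently reasoning via a weighted $L^\infty$ norm along characteristics) gives the homogeneous piece of the inequality.

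The main obstacle is controlling $\rho(t)=\sup_\xi|R(t,\xi)|/|\xi|^s$. The residual $R$ collects every effect of the non-constant selection: the center-of-mass drift $-S_1$ from~\eqref{dtxbar}, the correction $-S_2+S_0M_2$ in $\tfrac{\d}{\d t}M_2$ from~\eqref{dtM2} relative to the constant-$m$ case, and the difference between $m$ and $m(\overline x)+\tfrac12$ in the selection term, recentered and rescaled into the $\gamma$ frame. Using Assumption~\ref{assumption-local-lipschitz} to Taylor-expand $m$ about $\overline x(t)$ gives bounds like $|S_k|\leqslant C_m(|M_{k+1}|+M_{k+2})$, and each summand of $R/|\xi|^s$ can be written as a product of such $S_k$'s (and higher-order remainders coming from $m$) with Fourier-side factors controlled by the standard Taylor bound $\bigl|\widehat\gamma(\xi)-\sum_{j\leqslant p}\tfrac{(-i\xi)^j}{j!}M_j(\gamma)\bigr|\leqslant C_p|\xi|^s$. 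Converting $M_{2k}(\gamma)=M_{2k}(g)/M_2(g)^k$ and plugging in the improved estimates of Theorem~\ref{thm-moments-decay} — in particular the lower bound $M_2(g)\geqslant C_{k_0}M_2^0\,e^{-t/2}$ and the exponential upper bounds on $M_{2k}/M_2$ — shows that every contribution to $\rho$ decays like $e^{-\mu t}$ for some $\mu>\lambda_s$, provided $k_0\geqslant 3$ is large enough and $M_{2k_0}^0/M_2^0$ is small enough; this is exactly the role of the hypothesis $\tfrac1{2^{2k_0-1}}<\delta$. Gronwall applied to $\tfrac{\d^+}{\d t}d_s\leqslant -\lambda_s d_s+L_0 e^{-\mu t}$ then yields the claimed estimate with $L=L_0/(\mu-\lambda_s)$. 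I expect most of the technical work to lie in the term-by-term verification that every summand in $R/|\xi|^s$ satisfies $\mu>\lambda_s$: without the sharp lower bound on $M_2$ and the improved decay of $M_{2k}/M_2$, ratios like $S_k/M_2^{k/2}$ would decay too slowly to beat the rather small rate $\lambda_s$.
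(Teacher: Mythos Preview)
Your Grönwall-via-characteristics derivation of the homogeneous rate~$\lambda_s=1-\tfrac s4-2^{1-s}$ is correct and equivalent to the paper's Duhamel formulation (Proposition~\ref{prop-convergence-gamma}); both reduce the problem to controlling the forcing~$\rho(t)=\sup_\xi|R(t,\xi)|/|\xi|^s$.

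The genuine gap is in your treatment of~$R$. You propose to bound ``each summand of~$R/|\xi|^s$'' separately using Taylor expansions of~$\widehat\gamma$, but this cannot work: the four terms in~\eqref{eq-defR} do not individually vanish to order~$s>2$ at~$\xi=0$ (for instance~$S_0\widehat\gamma(0)=S_0\neq0$ in general, so~$S_0\widehat\gamma(\xi)/|\xi|^s$ blows up). The finiteness of~$\rho(t)$ rests on a nontrivial cancellation among the terms: rewriting~$S_k/M_2^{k/2}=i^k\widehat\varphi^{(k)}(0)$, the paper (Lemma~\ref{lemma-estimates-Rgammahat}) reorganizes~$R$ and checks by direct computation that~$R(t,0)=R'(t,0)=R''(t,0)=0$, which yields a global bound~$|R|\leqslant C_3(t)|\xi|^3$; together with the easier~$|R|\leqslant C_2(t)|\xi|^2$, interpolation gives~$|R|\leqslant C_2^{3-s}C_3^{s-2}|\xi|^s$. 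A second subtlety you miss is that~$C_3(t)$ involves ratios like~$M_4/M_2^{3/2}$ and~$\sqrt{M_4M_6}/M_2^{3/2}$, which may \emph{grow} like~$e^{(\frac14-\omega)t}$ when the available moment-decay rate~$\omega<\tfrac14$; only after interpolation does the~$|\xi|^s$ coefficient decay, and only for~$s$ sufficiently close to~$2$. The paper then compares this decay rate~$c_s$ with~$\lambda_s$ to identify the admissible~$s$. Your assertion that ``every contribution to~$\rho$ decays like~$e^{-\mu t}$ for some~$\mu>\lambda_s$'' skips both the cancellation structure and this interpolation-and-comparison step, which together form the technical core of the proof.
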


\begin{remark}\label{rk-cv-gamma} When~$\delta-\frac1{2^{2k_0-1}}>\frac14$, this estimation is valid for all~$s\in(2,3)$. Otherwise, there exist~$2<s_0<\bar{s}<3$ such that this estimation is valid on~$(2,s_0)$. For~$s=s_0$ we obtain~$d_{s_0}(\gamma,\gamma_\infty)(t) \leqslant \big(d_{s_0}(\gamma_0,\gamma_\infty)+Lt\big)e^{-\lambda_{s_0} t}$, and for~$s\in(s_0,\bar{s})$, we obtain that there exists a constant~$c_s\in(0,\lambda_s)$ such that
  \[d_{s}(\gamma,\gamma_\infty)(t) \leqslant d_{s}(\gamma_0,\gamma_\infty)e^{-\lambda_{s} t}+Le^{-c_{s} t}.\]
  In all these situations, the constant~$L$ depends on~$\delta,s$ and the initial moments~$M_{2k}^0$ for~$1\leqslant k\leqslant k_0$.
\end{remark}
To prove this last theorem, we prove a contraction property on the reproduction operator~$B_0$ in the distance~$d_s$,
which is due to the quadratic formulation of its Fourier transform,
and combine it with the results of Theorems~\ref{thm-stability-dirac} and~\ref{thm-moments-decay} that enable to control 
the remainder~$R$. Although the model under study is uncommon from a biologial viewpoint,
we hope that the method developed here, based on the contractive property in the Fourier distance
and the control on the selection effects, can be extended to other biological problems.

  \begin{remark}One interpretation of the result of Theorem~\ref{th-cv-gamma} regarding the original density~$g$ is that the self-similar profile~$\frac1{\sqrt{M_2(t)}}\gamma_\infty(\sqrt{M_2(t)}(x-\overline{x}(t)))$ is a better approximation of the solution than the Dirac mass located at~$\overline{x}(t)$ for large times~$t$. Indeed we may for instance use the distance~$d_2$ defined as in~\eqref{def-fourier-distance} for probability densities having same first moments. With the scaling properties of~$d_2$ and the fact that~$(d_s)^{\frac{2}s}$ controls~$d_2$ (see~\cite[Proposition 2.9]{Car-Tos-2007}), we indeed obtain
    \begin{align*} d_2\big(g(t,\cdot),\tfrac1{\sqrt{M_2(t)}}\gamma_\infty(\sqrt{M_2(t)}(x-\overline{x}(t)))\big)&=M_2(t)d_2(\gamma(t,\cdot),\gamma_\infty)\\
      &\lesssim M_2(t)\big(d_s(\gamma(t,\cdot),\gamma_\infty)\big)^{\frac2s}\lesssim  M_2(t)e^{-\frac{2\lambda_s}s t},
    \end{align*}
which means, since~$M_2(t)$ behaves as~$e^{-\frac{t}2}$, that the decay rate is at least~$\frac12+\frac{2\lambda_s}s$, while \begin{equation*}d_2(g(t,\cdot),\delta_{\overline{x}(t)})=M_2(t)d_2(\gamma(t,\cdot),\delta_{0})\geqslant M_2(t)\big(d_2(\gamma_\infty,\delta_0)-d_2(\gamma(t,\cdot),\gamma_\infty)\big),
\end{equation*}
therefore the decay rate is here at most~$\frac12$ (since~$d_2(\gamma(t,\cdot),\gamma_\infty)$ converges to~$0$).
  \end{remark}

\section{General results}\label{section-general-results}

\subsection{Global existence and uniqueness of measure solutions}\label{section-exist-uniq}

We want to give a meaning to our model~\eqref{model} for an arbitrary nonnegative measure of finite mass on~$\mathbb{R}$. If~$f$ is a smooth solution of~\eqref{model}, thinking of~$B_0[f(t,\cdot)](x)$ as a prescribed source term, we can interpret~\eqref{model} pointwise for every~$x\in\mathbb{R}$ as an ordinary differential equation in time, therefore for all~$t\geqslant0$ and for all~$x\in\mathbb{R}$ we have
\[f(t,x)=f^0(x)e^{-m(x)t}+\int_0^te^{-m(x)(t-s)}B_0[f(s,\cdot)](x)\d s,\]
and this will be the starting point of our definition of a measure solution. In the following, we will often use the abusive notation~$f(x)\d x$ even if~$f$ is only a measure (or~$f(t,x)\d x$ if the measure depends on time). Similarly, if~$h$ a measurable function in~$\mathbb{R}$, we may write~$\varphi(x)=h(x)f(x)$ to define a signed measure~$\varphi$, even if~$f$ is only a measure.

We denote by~$\mathcal{M}$ the space of finite signed (Borel) measures on~$\mathbb{R}$. Using the total variation norm defined by
\begin{equation*}
  \|\nu\|_{TV}=\sup \sum_{i=1}^n|\nu(A_i)|,
\end{equation*}
where the supremum is taken over all finite partitions of~$\mathbb{R}$ by Borel sets~$(A_i)_{1\leqslant i\leqslant n}$, this turns~$\mathcal{M}$ into a Banach space~\cite{cohn2013measure}.

Furthermore, we have that for any measurable function~$h$ taking values in~$[-H,H]$, we have
\begin{equation*}
  \Big|\int_\mathbb{R}h(x)\d \nu(x)\Big|\leqslant H\|\nu\|_{TV},
\end{equation*}
and therefore we obtain that~$\|\nu\|_{TV}=\sup|\int_\mathbb{R}h(x)\d \nu(x)|$ where the supremum is taken over all measurable function~$h$ taking values in~$[-1,1]$.

The set~$\mathcal{M}_+\subset\mathcal{M}$ of nonnegative measures with finite mass is a closed subspace of~$\mathcal{M}$.
Indeed, if~$\nu_n$ converges to~$\nu$ in total variation norm, 
then we have~$|\nu_n(A)-\nu(A)|\leqslant\|\nu_n-\nu\|_{TV}\to0$ for all Borel set~$A\subset\mathbb{R}$, 
thus~$\nu_n(A)\to\nu(A)$. So, if~$\nu_n$ is a nonnegative measure for all~$n$, then~$\nu$ is also nonnegative.

Notice that if~$\nu\in\mathcal{M}_+$, then~$B_0[\nu]$ also belongs to~$\mathcal{M}_+$, has same mass, and for all bounded (or nonnegative) and measurable~$h$ we have
\begin{equation}\label{weak-form-B0}
  \int_\mathbb{R}h(x)B_0[\nu](x)\d x=\frac{\int_{\mathbb{R}\times\mathbb{R}}h\big(\frac{y+z}2\big)\nu(y)\d y\,\nu(z) \d z}{\int_\mathbb{R}\nu(x)\d x},
\end{equation}
when~$\nu$ has positive mass (thus the conservation of mass is obtained with~$h=1$), and~$B_0[\nu]=0$ if~$\nu$ is the zero measure. 
\bigskip

We are therefore ready to study the properties of measure solutions of our model~\eqref{model}, given by Definition~\ref{def-weak-solution}. Notice that all terms in~\eqref{eq-weak-solution} are well defined as nonnegative integrals, and if~$m$ is measurable and bounded below, they are finite, since~$x\mapsto e^{-m(x)t}$ is measurable and bounded for~$t\geqslant0$. Consequently, if~$f$ is a weak solution and~$h$ is a measurable bounded (resp. nonnegative) function, being a uniform (resp. monotone) limit of a sequence of measurable simple functions, we have :
  \begin{equation}\label{weak-form-h}
    \int_\mathbb{R}h(x)f(t,x)\d x=\int_\mathbb{R}h(x)e^{-m(x)t}f^0(x)\d x + \int_0^t\int_\mathbb{R}h(x)e^{-m(x)(t-s)}B_0[f(s,\cdot)](x)\d x\, \d s.
  \end{equation}

  Thanks to this property, we can have a useful semigroup property for the solutions of our model.
  \begin{proposition}\label{prop-semigroup}
    Let~$t_0>0$. If~$f$ is a weak solution to the model~\eqref{model}, then~$t\mapsto f(t_0+t,\cdot)$ is also a solution for the initial condition~$f(t_0,\cdot)$.
  \end{proposition}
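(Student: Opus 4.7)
The plan is to start from the weak formulation~\eqref{eq-weak-solution} written at time $t_0+t$, and to reorganize the right-hand side so as to display it as the weak form associated with the shifted initial datum $f(t_0,\cdot)$. Concretely, writing
\[
  \int_A f(t_0+t,x)\,\d x = \int_A e^{-m(x)(t_0+t)}f^0(x)\,\d x + \int_0^{t_0+t}\!\int_A e^{-m(x)(t_0+t-s)}B_0[f(s,\cdot)](x)\,\d x\,\d s,
\]
I would split the $s$-integral at $t_0$, and in the piece over $[t_0,t_0+t]$ perform the change of variables $s'=s-t_0$. This immediately produces the ``new source'' term
\[
  \int_0^t\!\int_A e^{-m(x)(t-s')}B_0[f(t_0+s',\cdot)](x)\,\d x\,\d s'
\]
with the correct integrand for the shifted equation. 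It remains to recognise the two remaining terms (the pure exponential one plus the integral over $[0,t_0]$) as $\int_A e^{-m(x)t}f(t_0,x)\,\d x$.

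For this last identification, the key observation is that both leftover terms factor an $e^{-m(x)t}$ in front of their $x$-integrand, so their sum reads
\[
  \int_\mathbb{R} e^{-m(x)t}\mathbf{1}_A(x)\Big[e^{-m(x)t_0}f^0(x)+\int_0^{t_0}e^{-m(x)(t_0-s)}B_0[f(s,\cdot)](x)\,\d s\Big]\d x.
\]
The function $h(x):=e^{-m(x)t}\mathbf{1}_A(x)$ is bounded measurable since $m$ is bounded below, so I may apply the weak identity~\eqref{weak-form-h} at time $t_0$ with this $h$, which shows that the bracket above, once integrated against $e^{-m(x)t}\mathbf{1}_A(x)$, equals $\int_A e^{-m(x)t}f(t_0,x)\,\d x$. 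This yields exactly~\eqref{eq-weak-solution} for the candidate $\tilde f(t,\cdot):=f(t_0+t,\cdot)$ with initial condition $f(t_0,\cdot)$.

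Finally I would check that $\tilde f\in C([0,T-t_0],\mathcal{M}_+)$, which is inherited from the continuity of $f$ by translation in time, and that $f(t_0,\cdot)$ indeed lies in $\mathcal{M}_+$, which is immediate from $f\in C([0,T],\mathcal{M}_+)$. The only mildly technical point is the legitimacy of using~\eqref{weak-form-h} with a time-dependent bounded test function $h$ and then applying Fubini to exchange the $\d s$ integration with the $x$-integration; this is justified by the fact that $e^{-m(x)(t_0+t-s)}$ is bounded uniformly in $s\in[0,t_0]$ and $x\in\mathbb{R}$ (again by the lower bound on $m$), so that all integrals at stake are absolutely convergent and Fubini's theorem applies without difficulty. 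I expect this Fubini/measurability bookkeeping to be the only real step requiring care; the rest is a direct manipulation of the Duhamel formula.
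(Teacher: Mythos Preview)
Your proposal is correct and follows essentially the same route as the paper: split the Duhamel integral at $t_0$, change variables in the piece over $[t_0,t_0+t]$, and recognise the remaining two terms as $\int_A e^{-m(x)t}f(t_0,x)\,\d x$ by applying~\eqref{weak-form-h} with the test function $h(x)=\mathbf{1}_A(x)e^{-m(x)t}$. One minor remark: the Fubini step you flag is not actually needed, since~\eqref{weak-form-h} already gives the identity with the $\d s$-integral on the outside, so no interchange of integrals is required.
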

  \begin{proof}
    We fix~$t>0$. If~$A$ is a Borel set, we have
    \begin{align*}
      \int_Af(t_0+t,x)\d x&=\int_Ae^{-m(x)(t_0+t)}f^0(x)\d x + \int_0^{t_0+t}\int_Ae^{-m(x)(t_0+t-s)}B_0[f(s,\cdot)](x)\d x\, \d s.\\
                       &= \int_Ae^{-m(x)(t_0+t)}f^0(x)\d x +\int_0^{t_0}\int_Ae^{-m(x)(t_0+t-s)}B_0[f(s,\cdot)](x)\d x\, \d s\\
                         &\hspace{2cm} + \int_0^{t}\int_Ae^{-m(x)(t-s)}B_0[f(t_0+s,\cdot)](x)\d x\, \d s.
    \end{align*}
    Since the function~$\mathbf{1}_{A}(x)e^{-m(x)t}$ is measurable (and nonnegative), we have thanks to~\eqref{weak-form-h} that the first two terms of the right-hand side of this last equality combine and we get
    \begin{equation*}
      \int_Af(t_0+t,x)\d x = \int_Ae^{-m(x)t}f(t_0,x)\d x+ \int_0^{t}\int_Ae^{-m(x)(t-s)}B_0[f(t_0+s,\cdot)](x)\d x\, \d s,
    \end{equation*}
    and this ends the proof.
  \end{proof}

We proceed to the proof of Theorem~\ref{thm-existence-uniqueness}, which provides global existence and uniqueness of solutions to~\eqref{model} in $\mathcal{M}_+$.

\begin{proof} First of all, we need to study some contraction properties 
  of~$B_0$. If~$h$ is a measurable function from~$\mathbb{R}$ to~$[-H,H]$, and~$\nu,\widetilde{\nu}\in\mathcal{M}_+$, we write~$\rho=\int_\mathbb{R}\nu(x)\d x$ and~$\widetilde{\rho}=\int_\mathbb{R}\widetilde{\nu}(x)\d x$ 
  and we have, when~$\rho,\widetilde{\rho}>0$,
{\allowdisplaybreaks  \begin{align*}
    \Big|\int_\mathbb{R}h(x)B_0[\nu](x)&\d x -\int_\mathbb{R}h(x)B_0[\widetilde{\nu}](x)\d x\Big|\\
    &=\Big|\frac{\int_{\mathbb{R}\times\mathbb{R}}h\big(\frac{y+z}2\big)\nu(y)\d y \,\nu(z) \d z}{\rho}-\frac{\int_{\mathbb{R}\times\mathbb{R}}h\big(\frac{y+z}2\big)\widetilde{\nu}(y)\d y \,\widetilde{\nu}(z)\d z}{\widetilde{\rho}}\Big|\\
\begin{split}         &\leqslant \Big|\frac{\int_{\mathbb{R}\times\mathbb{R}}h\big(\frac{y+z}2\big)\nu(y)\d y \,\nu(z)\d z}{\rho}-\frac{\int_{\mathbb{R}\times\mathbb{R}}h\big(\frac{y+z}2\big)\nu(y)\d y \,\widetilde{\nu}(z) \d z}{\rho}\Big|\\
         &\hspace{1cm}+\Big|\frac{\int_{\mathbb{R}\times\mathbb{R}}h\big(\frac{y+z}2\big)\nu(y)\d y\,\widetilde{\nu}(z) \d z}{\rho}-\frac{\int_{\mathbb{R}\times\mathbb{R}}h\big(\frac{y+z}2\big)\nu(y)\d y\, \widetilde{\nu}(z) \d z}{\widetilde{\rho}}\Big|\\
                         &\hspace{1cm}+\Big|\frac{\int_{\mathbb{R}\times\mathbb{R}}h\big(\frac{y+z}2\big)\nu(y)\d y\,\widetilde{\nu}(z)\d z}{\widetilde{\rho}}-\frac{\int_{\mathbb{R}\times\mathbb{R}}h\big(\frac{y+z}2\big)\widetilde{\nu}(y)\d y\,\widetilde{\nu}(z)\d z}{\widetilde{\rho}}\Big|
                       \end{split}\\
                        \begin{split}
                         &\leqslant \frac{\int_{\mathbb{R}}\big|\int_\mathbb{R}h\big(\frac{y+z}2\big)\nu(z)\d z-\int_\mathbb{R}h\big(\frac{y+z}2\big)\widetilde{\nu}(z)\d z\big|\nu(y)\d y }{\rho}\\
         &\hspace{1cm}+\Big|\frac{1}{\rho}-\frac{1}{\widetilde{\rho}}\Big|\int_{\mathbb{R}\times\mathbb{R}}\big|h\big(\frac{y+z}2\big)\big|\nu(y)\d y\,\widetilde{\nu}(z) \d z\\
                         &\hspace{1cm}+\frac{\int_{\mathbb{R}}\big|\int_\mathbb{R}h\big(\frac{y+z}2\big)\nu(y)\d y-\int_\mathbb{R}h\big(\frac{y+z}2\big)\widetilde{\nu}(y)\d y\big|\,\widetilde{\nu}(z)\d z}{\widetilde{\rho}}
                       \end{split}
                        \\
    &\leqslant2H \|\nu-\widetilde{\nu}\|_{TV}+H|\widetilde{\rho}-\rho|.
  \end{align*}}
  We actually have~$\rho=\|\nu\|_{TV}$ (since~$\nu$ is nonnegative) and~$\widetilde{\rho}=\|\widetilde{\nu}\|_{TV}$, therefore we obtain
  \begin{equation}\label{hB0-TV}
    \Big|\int_\mathbb{R}h(x)B_0[\nu](x)\d x -\int_\mathbb{R}h(x)B_0[\widetilde{\nu}](x)\d x\Big|\leqslant3H\|\nu-\widetilde{\nu}\|_{TV},
  \end{equation}
  and one easily checks that this inequality is still valid if~$\rho=0$ or~$\widetilde{\rho}=0$.
  
\bigskip

  We now fix~$f^0\in\mathcal{M}_+$ and~$T>0$. For any fixed~$L>0$ (to be determined later on), the space~$\mathcal{X}=C([0,T],\mathcal{M})$ with the norm
  \begin{equation*}
    \|f\|_{\mathcal{X}}=\sup_{t\in[0,T]}e^{-Lt}\|f(t,\cdot)\|_{TV},
  \end{equation*}
  is a Banach space. The subset~$\mathcal{X}_+=C([0,T],\mathcal{M}_+)$ of nonnegative finite measures varying continuously in time is a closed subset of~$\mathcal{X}$, and we will construct a fixed point by Picard iteration on this complete metric space.
  
  For~$f\in C([0,T],\mathcal{M}_+)$, we denote~$\Phi(f)$ the time-dependent measure defined for all Borel set~$A\subset\mathbb{R}$ by
  \begin{equation}\label{def-Phi}
    \int_A\Phi(f)(t,x)\d x=\int_Ae^{-m(x)t}f^0(x)\d x + \int_0^t\int_Ae^{-m(x)(t-s)}B_0[f(s,\cdot)](x)\d x\, \d s,
  \end{equation}
  and we are therefore looking for a fixed point in~$\mathcal{X}_+$ of the operator~$\Phi$. Let us first check that~$\Phi$ is well-defined from~$\mathcal{X}_+$ to itself.

 If~$h:\mathbb{R}\to[-1,1]$ is a measurable function, being a uniform limit of a sequence of measurable simple functions, we have
  \begin{equation*} 
    \int_\mathbb{R}h(x)\Phi(f)(t,x)\d x=\int_\mathbb{R}h(x)e^{-m(x)t}f^0(x)\d x + \int_0^t\int_\mathbb{R}h(x)e^{-m(x)(t-s)}B_0[f(s,\cdot)](x)\d x\, \d s.
  \end{equation*}
  Then, taking $K \geqslant 0$ such that $m$ is bounded below by $-K$, we obtain, for $t,t' \geqslant 0$,
  \begin{align*}
    \Big|\int_\mathbb{R}&h(x)\Phi(f)(t',x)\d x-\int_{\mathbb{R}}h(x)\Phi(f)(t,x)\d x\Big|\\
                    &\leqslant\int_\mathbb{R}|e^{-m(x)t'}-e^{-m(x)t}|f^0(x)\d x +\int_{t}^{t'}\int_{\mathbb{R}}|e^{-m(x)(t'-s)}|B_0[f(s,\cdot)](x)\d x\, \d s \\
                    &\hspace{1cm}+ \int_0^t\int_\mathbb{R}|e^{-m(x)(t'-s)}-e^{-m(x)(t-s)}|B_0[f(s,\cdot)](x)\d x\, \d s \\
            &\leqslant Ke^{KT}|t'-t|\|f^0\|_{TV}+\int_0^tKe^{KT}|t'-t|\|f(s,\cdot)\|_{TV}\d s+\int_{t}^{t'}e^{KT}\|f(s,\cdot)\|_{TV} \d s,
  \end{align*}
  where we used the fact that~$e^{-m(x)t}$ is~$Ke^{KT}$-Lipschitz in time for all~$x\in\mathbb{R}$, and the fact that the total mass of~$B_0[f(s,\cdot)]$ (which is its total variation since~$f$ is nonnegative) is the same as the one of~$f(s,\cdot)$. Furthermore we have~$\|f(s,\cdot)\|_{TV}\leqslant e^{Ls}\|f\|_{\mathcal{X}}\leqslant e^{LT}\|f\|_{\mathcal{X}}$ for all~$s\in[0,T]$. Therefore we obtain
  \[\|\Phi(f)(t',\cdot)-\Phi(f)(t,\cdot)\|_{TV}\leqslant(Ke^{KT}\|f^0\|_{TV}+(KT+1)e^{(K+L)T}\|f\|_{\mathcal{X}})|t'-t|,\]
  showing that~$t\mapsto\Phi(f)(t,\cdot)$ is continuous in time (and actually Lipschitz). Notice that since all the measures in the definition of~$\Phi$ are nonnegative, we get that~$\Phi(f)(t,\cdot)$ is a nonnegative measure for all time, and the operator~$\Phi$ therefore sends~$\mathcal{X}_+$ into itself.

  For~$f,\widetilde{f}\in\mathcal{X}_+$, we fix a measurable function~$h:\mathbb{R}\to[-1,1]$, and we compute similarly, thanks to~\eqref{hB0-TV}
  \begin{align*}
    \Big|\int_\mathbb{R}&h(x)\Phi(f)(t,x)\d x-\int_{\mathbb{R}}h(x)\Phi(\widetilde{f})(t,x)\d x\Big|\\
                    &\leqslant\int_{0}^{t}\Big|\int_\mathbb{R}h(x)e^{-m(x)(t-s)}B_0[f(s,\cdot)](x)\d x-\int_\mathbb{R}h(x)e^{-m(x)(t-s)}B_0[\widetilde{f}(s,\cdot)](x)\d x\Big|\, \d s\\
            &\leqslant\int_0^t3e^{KT}\|f(s,\cdot)-\widetilde{f}(s,\cdot)\|_{TV}\, \d s\\
    &\leqslant\int_0^t3e^{KT}\|f-\widetilde{f}\|_{\mathcal{X}}\,e^{Ls} \,\d s=\frac{3e^{KT}}{L}(e^{Lt}-1)\|f-\widetilde{f}\|_{\mathcal{X}},
  \end{align*}
  which shows that
  \[\|\Phi(f)(t,\cdot)-\Phi(\widetilde{f})(t,\cdot)\|_{TV}\leqslant\frac{3e^{KT}}{L}e^{Lt}\|f-\widetilde{f}\|_{\mathcal{X}},\]
  and therefore
  \[\|\Phi(f)-\Phi(\widetilde{f})\|_{\mathcal{X}}\leqslant\frac{3e^{KT}}{L}\|f-\widetilde{f}\|_{\mathcal{X}}.\]
  So, for~$L>3e^{KT}$,~$\Phi$ is a contraction and therefore has a unique fixed point in $\mathcal{X}_+$.  
\end{proof}

\subsection{Evolution of moments}

In order to investigate the large time behavior of a solution~$f$, we will need to control 
the moments. The key observation is that any bound on finite moments is preserved by~$B_0$.

\begin{lemma}\label{moment-B0}
  If~$\nu\in\mathcal{M_+}$, then for all~$k\in\mathbb{N}$, we have 
  \begin{equation}\label{estimate:moment_B0}
  \int_\mathbb{R}|x|^kB_0[\nu](x)\d x\leqslant\int_\mathbb{R}|x|^k\nu(x)\d x.
  \end{equation}

  Furthermore, if~$\int_\mathbb{R}|x|^k\nu(x)\d x$ is finite and~$\nu$ has positive mass, all the moments up to order~$k$ are well defined and we have
  \begin{equation}\label{expansion-moment-B0}
    \mu_k(B_0[\nu]) =\sum_{j=0}^k\binom{k}{j}\frac{\mu_j(\nu)\mu_{k-j}(\nu)}{2^k\mu_0(\nu)}.
  \end{equation}
\end{lemma}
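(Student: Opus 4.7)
The plan is to derive both statements directly from the weak form \eqref{weak-form-B0} of the operator $B_0$, by choosing appropriate test functions $h$. Throughout, we may assume $\nu$ has positive mass, since otherwise $B_0[\nu]=0$ and both statements are trivial.

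For the inequality~\eqref{estimate:moment_B0}, I would apply \eqref{weak-form-B0} with the nonnegative measurable function $h(x)=|x|^k$. This gives
\[\int_\mathbb{R}|x|^k B_0[\nu](x)\,\d x=\frac{1}{\mu_0(\nu)}\iint_{\mathbb{R}^2}\Bigl|\tfrac{y+z}{2}\Bigr|^k\nu(y)\,\d y\,\nu(z)\,\d z.\]
By the triangle inequality and convexity of $t\mapsto t^k$ on $[0,\infty)$ (trivial for $k\in\{0,1\}$), one has
\[\Bigl|\tfrac{y+z}{2}\Bigr|^k\leqslant\Bigl(\tfrac{|y|+|z|}{2}\Bigr)^k\leqslant\tfrac{|y|^k+|z|^k}{2}.\]
Inserting this bound and using the symmetry of the double integral in $(y,z)$ collapses the right-hand side to $\mu_0(\nu)\int_\mathbb{R}|x|^k\nu(x)\,\d x$, which after dividing by $\mu_0(\nu)$ yields~\eqref{estimate:moment_B0}.

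For the formula~\eqref{expansion-moment-B0}, I would first note that if $\int|x|^k\nu\,\d x<\infty$, then for any $0\leqslant j\leqslant k$ the bound $|x|^j\leqslant 1+|x|^k$ ensures $\mu_j(\nu)$ is well defined and finite, and the inequality just proved shows $\int|x|^k B_0[\nu]\,\d x<\infty$ as well, so $\mu_k(B_0[\nu])$ is well defined. Since $h(x)=x^k$ is neither bounded nor nonnegative in general, I would extend~\eqref{weak-form-B0} to this $h$ by truncating with $h_R(x)=x^k\mathbf{1}_{|x|\leqslant R}$ (which is bounded) and passing to the limit $R\to\infty$ using dominated convergence, with $|x|^k B_0[\nu]$ and $\frac{|y|^k+|z|^k}{2}\,\nu\otimes\nu$ as respective integrable dominants. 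This gives
\[\mu_k(B_0[\nu])=\frac{1}{\mu_0(\nu)}\iint_{\mathbb{R}^2}\Bigl(\tfrac{y+z}{2}\Bigr)^k\nu(y)\,\d y\,\nu(z)\,\d z.\]

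To conclude, I would expand the binomial $(y+z)^k=\sum_{j=0}^k\binom{k}{j}y^j z^{k-j}$ inside the double integral. Since each term $|y|^j|z|^{k-j}$ is integrable against $\nu\otimes\nu$ (by the moment bounds above, or more directly by $|y|^j|z|^{k-j}\leqslant |y|^k+|z|^k$), Fubini's theorem applies and splits the integral into a sum of products $\mu_j(\nu)\mu_{k-j}(\nu)$, yielding~\eqref{expansion-moment-B0}. There is no real obstacle here beyond the routine integrability/measurability bookkeeping; the key ingredients are the convexity bound for the inequality and the binomial expansion for the identity, both natural because of the symmetric averaging structure of $B_0$.
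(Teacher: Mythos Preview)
Your proof is correct and essentially parallels the paper's argument for the formula~\eqref{expansion-moment-B0} (truncation, dominated convergence, binomial expansion, Fubini). For the inequality~\eqref{estimate:moment_B0}, however, you take a genuinely shorter route: the paper expands $|y+z|^k$ by the binomial theorem and then controls each cross term $\int|y|^j\nu\int|z|^{k-j}\nu$ via H\"older's inequality before resumming, whereas you bypass this entirely with the convexity bound $\bigl|\tfrac{y+z}{2}\bigr|^k\leqslant\tfrac{|y|^k+|z|^k}{2}$. Your argument is more elementary and gets the result in one line; the paper's approach, while longer, has the minor advantage of making the intermediate moments explicit (which is in the spirit of the second part of the lemma).
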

\begin{proof}
Let us remark that both sides of the first inequality are well defined as nonnegative integrals. We suppose that~$\int_\mathbb{R}|x|^k\nu(x)\d x$ 
is finite and that~$\nu$ has positive mass,  thanks to~\eqref{weak-form-B0} we get :
  \begin{align*}
    \int_\mathbb{R}|x|^kB_0[\nu](x)\d x &= \frac{\int_{\mathbb{R}\times\mathbb{R}}\frac{|y+z|^k}{2^k}\nu(y)\d y\, \nu(z)\d z}{\int_\mathbb{R}\nu(x)\d x}\\
                          &\leqslant\frac{\sum_{j=0}^k\binom{k}{j}\int_\mathbb{R}|y|^j\nu(y)\d y\int_\mathbb{R}|z|^{k-j}\nu(z)\d z}{2^k\int_\mathbb{R}\nu(x)\d x}.
  \end{align*}
  Then, by Hölder inequality, for~$0\leqslant j\leqslant k$ we obtain
  \[\int_\mathbb{R}|y|^j\nu(y)\d y\leqslant\Big(\int_\mathbb{R}\nu(x)\d x\Big)^{1-\frac{j}k}\Big(\int_\mathbb{R}|x|^k\nu(x)\d x\Big)^{\frac{j}k},\]
and we therefore get
  \[\int_\mathbb{R}|x|^kB_0[\nu](x)\d x\leqslant\frac{\sum_{j=0}^k\binom{k}{j}\big(\int_\mathbb{R}\nu(x)\d x\big)\big(\int_\mathbb{R}|x|^k\nu(x)\d x\big)}{2^k\int_\mathbb{R}\nu(x)\d x}=\int_\mathbb{R}|x|^k\nu(x)\d x.\]
  
  Now, by using the dominated convergence theorem, we can approximate~$x^k$ by a bounded measurable function~$h_N(x)$, for instance by setting
  \begin{equation}\label{def-hN}
    h_N(x)=\begin{cases}(-N)^k&\text{if }x\leqslant-N\\
      x^k&\text{if }-N\leqslant x\leqslant N\\
      N^k&\text{if }x\geqslant N,
    \end{cases}
  \end{equation}
  and obtain, by applying~\eqref{weak-form-B0} to~$h_N$, that in the limit~$N\to\infty$, we have
  \begin{align*}
    \int_\mathbb{R}x^kB_0[\nu](x)\d x & = \frac{\int_{\mathbb{R}\times\mathbb{R}}\frac{(y+z)^k}{2^k}\nu(y)\d y \,\nu(z)\d z}{\int_\mathbb{R}\nu(x)\d x},\\
                           &= \frac{\sum_{j=0}^k\binom{k}{j}\int_\mathbb{R}y^j\nu(y)\d y\int_\mathbb{R}z^{k-j}\nu(z)\d z}{2^k\int_\mathbb{R}\nu(x)\d x}.
 \end{align*}
  which is then well-defined and provides the formula~\eqref{expansion-moment-B0}.
\end{proof}

Thanks to this property, we can now give the proof of the following proposition, which shows that finiteness of moments is propagated in time.

\begin{proposition}\label{moment-prop}
  Let~$f^0\in\mathcal{M}_+$,~$m$ be measurable and bounded below by~$-K\leqslant0$, and~$f$ be the global solution to the model~\eqref{model} given by Theorem~\ref{thm-existence-uniqueness}.
If for some~$k\in\mathbb{N}$,~$\int_\mathbb{R}|x|^kf^0(x)\d x<+\infty$, then for all~$t\geqslant0$, we have
  \[\int_{\mathbb{R}}|x|^kf(t,x)\d x \leqslant e^{(K+1)t}\int_\mathbb{R}|x|^kf^0(x)\d x.\]
\end{proposition}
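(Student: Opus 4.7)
The plan is to apply the weak formulation~\eqref{weak-form-h} with a bounded approximation of $|x|^k$, combine it with the moment-preservation property of~$B_0$ from Lemma~\ref{moment-B0}, and conclude by Grönwall's inequality. The central technical difficulty is that $|x|^k$ is unbounded, so it cannot be plugged directly into~\eqref{weak-form-h}; we first need to establish that $u(t):=\int_{\mathbb{R}}|x|^k f(t,x)\,\d x$ is finite for every $t\geqslant 0$, and only then derive the sharp estimate.

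First I would introduce the truncation $h_N(x):=\min(|x|^k,N^k)$, which is bounded, measurable, nonnegative, and increases pointwise to $|x|^k$ as $N\to\infty$. Plugging $h_N$ into~\eqref{weak-form-h} and using $m\geqslant-K$, which gives $e^{-m(x)s}\leqslant e^{Ks}$, I obtain, with $u_N(t):=\int_{\mathbb{R}}h_N(x)f(t,x)\,\d x$,
\[
u_N(t) \leqslant e^{Kt}\, u_N(0) + e^{Kt}\int_0^t\!\int_{\mathbb{R}} h_N(x)\, B_0[f(s,\cdot)](x)\,\d x\,\d s.
\]
A short case analysis yields the pointwise inequality $h_N\!\big(\tfrac{y+z}{2}\big)\leqslant h_N(y)+h_N(z)$: when $\big|\tfrac{y+z}{2}\big|\leqslant N$ one uses convexity of $t\mapsto|t|^k$, and when $\big|\tfrac{y+z}{2}\big|>N$ one observes that the triangle inequality forces at least one of $|y|,|z|$ to exceed $N$, hence $h_N(y)+h_N(z)\geqslant N^k$. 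Combined with the weak form~\eqref{weak-form-B0} of $B_0$, this gives $\int_{\mathbb{R}}h_N B_0[f(s,\cdot)]\,\d x\leqslant 2 u_N(s)$, and a first application of Grönwall's lemma produces a bound $u_N(t)\leqslant u_0\, e^{(K+2e^{KT})t}$ on any interval $[0,T]$, uniform in~$N$. Letting $N\to\infty$ by monotone convergence yields $u(t)<\infty$ on $[0,T]$, for every $T>0$.

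Once finiteness is established, I would pass to the limit $N\to\infty$ in the weak formulation by monotone convergence in both integrals (in $x$ and in~$s$), which produces
\[
u(t) = \int_{\mathbb{R}}|x|^k e^{-m(x)t}f^0(x)\,\d x + \int_0^t\!\int_{\mathbb{R}}|x|^k e^{-m(x)(t-s)}B_0[f(s,\cdot)](x)\,\d x\,\d s.
\]
Applying the sharp bound $\int_{\mathbb{R}}|x|^k B_0[f(s,\cdot)]\,\d x\leqslant u(s)$ from Lemma~\ref{moment-B0} (which comes from the untruncated convexity inequality $\big|\tfrac{y+z}{2}\big|^k\leqslant\tfrac{1}{2}(|y|^k+|z|^k)$) together with $e^{-m(x)\sigma}\leqslant e^{K\sigma}$ gives
\[
u(t)\leqslant e^{Kt}u_0+\int_0^t e^{K(t-s)}u(s)\,\d s.
\]
Setting $v(t):=e^{-Kt}u(t)$ reduces this to $v(t)\leqslant u_0+\int_0^t v(s)\,\d s$, so Grönwall's lemma yields $v(t)\leqslant u_0\, e^t$, i.e.\ the desired estimate $u(t)\leqslant u_0\, e^{(K+1)t}$. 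The main obstacle is thus the bootstrap from the crude factor-of-$2$ Grönwall (good only for finiteness) to the factor-of-$1$ bound provided by the untruncated convexity inequality, which is what delivers the sharp exponent $K+1$ in the statement.
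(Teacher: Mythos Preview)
Your proof is correct, but it takes a genuinely different route from the paper. The paper never works with the solution~$f$ directly in this argument: instead it goes back to the Picard iterates~$f_n$ from the existence proof and shows by induction on~$n$ that $\int|h_N(x)|f_n(t,x)\,\d x\leqslant e^{(K+1)t}\int|x|^kf^0(x)\,\d x$, using Lemma~\ref{moment-B0} at each step. Finiteness is then automatic at every stage of the induction, so the sharp constant~$1$ in front of~$u(s)$ is available from the outset; one passes first~$n\to\infty$ (using convergence in total variation against the bounded test function~$h_N$) and then~$N\to\infty$ by monotone convergence.

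Your approach has the advantage of being self-contained at the level of the weak formulation~\eqref{weak-form-h}: it does not rely on how the solution was constructed, only on its existence. The price you pay is the two-pass structure (crude factor-of-$2$ bound to get finiteness, then the sharp bound from Lemma~\ref{moment-B0}), whereas the paper's iteration gets the sharp exponent in one pass because the iterates are controlled inductively. A minor remark: your case analysis for $h_N\big(\tfrac{y+z}{2}\big)\leqslant h_N(y)+h_N(z)$ is slightly incomplete as stated---when $\big|\tfrac{y+z}{2}\big|\leqslant N$ but one of~$|y|,|z|$ exceeds~$N$, convexity alone does not finish it, though the trivial bound $h_N\leqslant N^k$ does. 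Also, for the final Gr\"onwall step you should note that~$u$ is measurable (as a monotone limit of the continuous functions~$u_N$) and locally bounded by your Step~1, so the integral form of Gr\"onwall applies. These are routine patches; the strategy is sound.
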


\begin{proof}
  Back to the Picard iteration sequence~$(f_n)_n$ with~$f_{n+1} = \Phi(f_n)$ 
  and~$f_0 = f^0$, where~$\Phi$ is defined in~\eqref{def-Phi} in the proof of Theorem~\ref{thm-existence-uniqueness}, 
  we proceed by induction and assume as the induction hypothesis for~$n\geqslant0$ that 
  \begin{equation*}
    \int_{\mathbb{R}}|x|^kf_n(t,x)\d x \leqslant e^{(K+1)t}\int_\mathbb{R}|x|^kf^0(x)\d x.
  \end{equation*}
Then we have, since~$h_N (x)$ defined in~\eqref{def-hN} is bounded and measurable,
  \begin{align*}
    \int_\R |h_N (x)| f_{n+1}(t,x)\d x &= \int_\R e^{-m(x)t} |h_N (x)| f^0(x)\d x \\
    &\hspace{2cm}+    \int_0^t \int_\R e^{-m(x)(t-s)} |h_N (x)| B_0[f_n(s, \cdot)](x) \d x \d s\\
    &\leqslant e^{Kt} \int_\R |x|^k f^0(x)\d x + \int_0^t e^{K(t-s)} \int_\R |h_N (x)| B_0[f_n(s, \cdot)](x) \d x \d s.
  \end{align*}
By~\eqref{estimate:moment_B0} in Lemma~\ref{moment-B0} and the induction hypothesis, we obtain
  \begin{align*}
    \int_\R |h_N| (x) f_{n+1}(t,x)\d x&\leqslant  \left( e^{Kt}+ \int_0^t e^{K(t-s)}e^{(K+1)s}\d s\right)\int_\R |x|^k f^0(x)\d x \\
    &= \left( 1+ \int_0^t e^{s}\d s\right)e^{Kt}\int_\R |x|^k f^0(x)\d x = e^{(K+1)t}\int_\R |x|^k f^0(x)\d x.
  \end{align*}
  By construction of~$f$, passing to the limit~$n \to \infty$ we obtain 
  \begin{equation*}
    \int_\R |h_N (x)| f(t,x)\d x \leqslant e^{(K+1)t}\int_\R |x|^k f^0(x)\d x.
  \end{equation*}
  By monotone convergence theorem, we can pass to the limit~$N \to \infty$ to get 
  \begin{equation*}
    \int_\R |x|^k f(t,x)\d x \leqslant e^{(K+1)t}\int_\R |x|^k f^0(x)\d x.\qedhere
  \end{equation*}
\end{proof}

Thanks to these bounds, we will obtain more precise properties on the evolution 
of the (even) moments of the solution. In some cases, we do not necessarily 
have that the moments are differentiable in time, but we obtain estimates similar 
to differential inequalities. A good framework to work with continuous functions 
of time is set in the following lemma.
\begin{lemma}\label{lemma-dplus}
  Given~$t\mapsto y(t)$ a function of time, we write
  \[\frac{\d^+}{\d t}y(t)=\limsup_{\tau\to0,\tau>0}\frac{y(t+\tau)-y(t)}{\tau}.\]
  Then, for any~$T>0$, if~$y$ is continuous and satisfies~$\frac{\d^+}{\d t}y(t)\leqslant0$ on~$[0,T]$, then it is nonincreasing on~$[0,T]$.

  Consequently, if, for some integrable functions~$a$ and~$b$, we have a continuous function~$y$ satisfying the following Grönwall-like estimate for all 
time~$t\in[0,T]$: 
\[\frac{\d^+}{\d t}y(t)\leqslant a(t)y(t)+b(t),\]
then we have for all~$t\in[0,T]$,
\[y(t)\leqslant y(0)e^{\int_0^ta(s)\d s}+\int_0^tb(u)e^{\int_u^ta(s)\d s}\d u.\]
\end{lemma}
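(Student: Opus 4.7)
The plan is to deduce the Grönwall-like estimate from the monotonicity assertion by an exponential change of variable, so the main work is the first assertion. To prove that $\frac{\d^+}{\d t}y(t)\leqslant 0$ on $[0,T]$ forces a continuous $y$ to be nonincreasing, I would fix $0\leqslant s<t\leqslant T$ and $\varepsilon>0$, and consider the set $E_\varepsilon:=\{u\in[s,t]:y(u)\leqslant y(s)+\varepsilon(u-s)\}$. It contains $s$ and is closed by continuity of $y$, hence admits a maximum $u^*\in[s,t]$. If $u^*<t$, the hypothesis $\frac{\d^+}{\d t}y(u^*)\leqslant 0<\varepsilon$ provides arbitrarily small $\tau>0$ with $y(u^*+\tau)-y(u^*)<\varepsilon\tau$, and combined with $u^*\in E_\varepsilon$ this gives $y(u^*+\tau)<y(s)+\varepsilon(u^*+\tau-s)$, contradicting the maximality of $u^*$. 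Therefore $u^*=t$, and $y(t)\leqslant y(s)+\varepsilon(t-s)$ holds for every $\varepsilon>0$; letting $\varepsilon\to 0$ concludes.

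For the Grönwall part, I would set $A(t):=\int_0^ta(s)\,\d s$ and introduce the continuous auxiliary function
\[w(t):=y(t)e^{-A(t)}-\int_0^tb(u)e^{-A(u)}\,\d u,\]
with the goal of showing that $w$ is nonincreasing via the first assertion. Expanding
\[w(t+\tau)-w(t)=[y(t+\tau)-y(t)]\,e^{-A(t+\tau)}+y(t)\bigl(e^{-A(t+\tau)}-e^{-A(t)}\bigr)-\int_t^{t+\tau}b(u)e^{-A(u)}\,\d u,\]
dividing by $\tau>0$ and taking $\limsup_{\tau\to 0^+}$, the Lebesgue differentiation theorem gives, at a.e.\ $t$, the limits $-a(t)y(t)e^{-A(t)}$ and $-b(t)e^{-A(t)}$ for the last two contributions, while the hypothesis on $\frac{\d^+}{\d t}y$ bounds the first by $[a(t)y(t)+b(t)]e^{-A(t)}$. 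Summing, the three contributions cancel and $\frac{\d^+}{\d t}w(t)\leqslant 0$ at a.e.\ $t$. Applying the first assertion to $w$ then yields $w(t)\leqslant w(0)=y(0)$, and multiplying by $e^{A(t)}$ produces the announced estimate.

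The only delicate point is that the Dini inequality for $w$ is obtained only at Lebesgue points of $a$ and $b$, while the monotonicity argument of the first part was set up assuming the Dini bound holds everywhere. I would bypass this by approximation: mollify $a$ and $b$ into continuous $a_n$, $b_n$ converging to $a$, $b$ in $L^1([0,T])$, prove the statement for the corresponding continuous-coefficient problem (for which the cancellation above is valid at every $t$), and pass to the limit using uniform convergence of $A_n\to A$ on $[0,T]$ together with dominated convergence in the expressions defining $w_n$ and the right-hand side of the Grönwall bound. Alternatively, the argument used for the first assertion extends via a Vitali-type covering argument to the case where the Dini inequality holds only almost everywhere, permitting a direct application to $w$.
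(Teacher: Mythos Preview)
Your argument for the first assertion is correct and is a standard variant of the paper's approach: the paper subtracts an affine function and looks at the \emph{minimum} of the resulting function to locate a point where the lower right Dini derivative is strictly positive (a mean-value-type contraposition), whereas you add an $\varepsilon$-slope and run a supremum-of-admissible-times argument. Both are classical and equally short. For the Gr\"onwall part you use exactly the same auxiliary function $w(t)=y(t)e^{-A(t)}-\int_0^t b(u)e^{-A(u)}\d u$ as the paper; the paper simply asserts $\frac{\d^+}{\d t}w\leqslant 0$ without further comment, so you are right to flag that for merely integrable $a,b$ the cancellation only occurs at Lebesgue points.

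However, both of your proposed repairs have gaps. The mollification route does not go through as written: once you replace $a,b$ by continuous $a_n,b_n$, the hypothesis $\frac{\d^+}{\d t}y\leqslant a y+b$ does \emph{not} transfer to $\frac{\d^+}{\d t}y\leqslant a_n y+b_n$, so the ``cancellation at every $t$'' you invoke for $w_n$ fails---computing $\frac{\d^+}{\d t}w_n$ leaves the residual $[(a-a_n)y+(b-b_n)]e^{-A_n}$, and bounding this by its integral is precisely the statement you are trying to prove. The Vitali-covering alternative is also unsafe: a continuous function with $\frac{\d^+}{\d t}\leqslant 0$ almost everywhere need not be nonincreasing (the Cantor staircase), and one cannot rescue this by finiteness of $\frac{\d^+}{\d t}w$ everywhere, since $e^{-A}$ is only absolutely continuous and its right Dini derivative can be infinite at non-Lebesgue points of $a$. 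In the paper's actual applications the coefficients $a,b$ are continuous (they are built from moments shown to be continuous in time), so the direct computation of $\frac{\d^+}{\d t}w\leqslant 0$ is valid at every $t$ and no repair is needed; for genuinely integrable coefficients one should instead appeal to a Carath\'eodory-type comparison theorem for differential inequalities.
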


\begin{proof}
  The proof (by contraposition) is based on a result similar to 
  the mean value theorem : if there exist some times~$t_0<t_1$ such 
  that~$y$ is continuous on~$[t_0,t_1]$ and~$y(t_0)<y(t_1)$, 
  then there exists~$t_*\in[t_0,t_1)$ such that~$\liminf\limits_{\tau\to0,\tau>0}\frac{y(t_*+\tau)-y(t_*)}{\tau}\geqslant\frac{y(t_1)-y(t_0)}{t_1-t_0}$.
  Thus, we can conclude that~$\frac{\d^+}{\d t}y(t_*)\geqslant\frac{y(t_1)-y(t_0)}{t_1-t_0}>0$. Surprisingly, we see that we only need the liminf of the slope to be nonpositive to show that a continuous function is nonincreasing (the proof also works for lower-semicontinuous functions). This result may be classical, but we did not find any reference, this is why we include the proof here.
  
  To show the claimed result, if there exist such times~$t_0$ and~$t_1$, we introduce
  \[z(s)=y((1-s)t_0+st_1)-(1-s)y(t_0)-sy(t_1).\]
  The function~$z$ is continuous on~$[0,1]$, and 
  satisfies~$z(0)=z(1)=0$. Therefore it reaches its minimum 
  on~$s_*\in[0,1]$, and without loss of generality we may 
  assume~$s_*\neq1$ (since in that case the minimum is also 
  reached at~$s=0$). We therefore get~$\liminf_{s\to s_*,s>s_*}\frac{z(s)-z(s_*)}{s-s_*}\geqslant0$, 
  which corresponds to the claim, with~$t_*=(1-s_*)t_0+s_*t_1$.

  Finally, the resolution of the Grönwall-like estimate is a direct consequence of the first part of the lemma, by setting~$z(t)=y(t)e^{-\int_0^ta(s)\d s}-\int_0^tb(u)e^{-\int_0^ua(s)\d s}\d u$. Then the continuous function~$z$ is such that~$\frac{\d^+}{\d t}z\leqslant0$, therefore nonincreasing.  
\end{proof}

We are now ready to provide this kind of estimates for the even 
moments of the solution, which will be useful when we derive estimates 
for the moments of the centered and normalized distribution.
We proceed to the proof of the following proposition.
\begin{proposition}\label{prop-continuity-moments-bounds}
  Let~$f^0\in\mathcal{M}_+$,~$m$ be measurable and bounded below by~$-K\leqslant0$, and~$f$ be the global solution to the model~\eqref{model} given by Theorem~\ref{thm-existence-uniqueness}.

  For~$k\geqslant0$, if~$\int_\mathbb{R}|x|^kf^0\d x<+\infty$, then~$t\mapsto\mu_k(f(t,\cdot))$ is continuous. Furthermore, if~$k$ is even, for all~$t\geqslant0$, we have 
  \begin{equation}
    \frac{\d^+}{\d t}\mu_k(f(t,\cdot))\leqslant\mu_k(B_0[f(t,\cdot)])+K\mu_k(f(t,\cdot)).\label{eq-subsolution-moments-f}
  \end{equation}
\end{proposition}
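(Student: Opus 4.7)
The first step is to upgrade the weak identity \eqref{weak-form-h} from bounded test functions to the unbounded map $x\mapsto x^k$. Under the moment assumption $\mu_k(|x|^k;f^0)<\infty$, Proposition~\ref{moment-prop} gives the finiteness of $\mu_k(f(t,\cdot))$ for all $t\geqslant0$, and by Hölder's inequality all lower order absolute moments are also finite. For $k$ even, I apply the monotone convergence theorem to the truncations $h_N(x)=\min(x^k,N^k)$, which are nonnegative bounded and increase pointwise to $x^k$; each of the three terms in \eqref{weak-form-h} (the left-hand side, the initial term involving $e^{-m(x)t}$, and the double integral involving $B_0$) passes to the limit, yielding the identity
\begin{equation*}
\mu_k(f(t,\cdot))=\int_\mathbb{R}x^ke^{-m(x)t}f^0(x)\,\d x+\int_0^t\int_\mathbb{R}x^ke^{-m(x)(t-s)}B_0[f(s,\cdot)](x)\,\d x\,\d s.
\end{equation*}
For $k$ odd I split $x^k=x_+^k-x_-^k$ and apply the same argument to each piece.

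From this explicit representation, continuity follows by dominated convergence in $t'\to t$: the first term has integrand dominated by $|x|^ke^{KT}f^0(x)$, which is integrable; in the second term, I write $\int_0^{t'}=\int_0^t+\int_t^{t'}$ and control the short piece by $|t'-t|$ times the uniform bound $e^{KT}\sup_{s\in[0,T]}\mu_k(|x|^k;B_0[f(s,\cdot)])$, which is finite thanks to Lemma~\ref{moment-B0} and Proposition~\ref{moment-prop}; the remaining piece $\int_0^t[F(t',s)-F(t,s)]\,\d s$ tends to $0$ by dominated convergence in the $s$ variable.

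For the differential inequality ($k$ even), I apply the semigroup property of Proposition~\ref{prop-semigroup} and the extended identity above at time $\tau$ starting from $f(t,\cdot)$, which gives
\begin{equation*}
\frac{\mu_k(f(t+\tau,\cdot))-\mu_k(f(t,\cdot))}{\tau}=\int_\mathbb{R}x^k\,\frac{e^{-m(x)\tau}-1}{\tau}f(t,x)\,\d x+\frac{1}{\tau}\int_0^\tau\int_\mathbb{R}x^ke^{-m(x)(\tau-s)}B_0[f(t+s,\cdot)](x)\,\d x\,\d s.
\end{equation*}
For the first term, since $k$ is even, $x^k\geqslant0$, and the bound $-m(x)\leqslant K$ yields $\tfrac{e^{-m(x)\tau}-1}{\tau}\leqslant Ke^{K\tau}$; hence the integrand is dominated from above by the integrable function $x\mapsto Ke^Kx^kf(t,x)$ for $\tau\in(0,1]$, and the reverse Fatou lemma produces $\limsup\leqslant\int x^k(-m(x))f(t,x)\,\d x\leqslant K\mu_k(f(t,\cdot))$. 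For the second term, the nonnegativity of $x^k$ and $B_0[f(t+s,\cdot)]$ together with $e^{-m(x)(\tau-s)}\leqslant e^{K\tau}$ give the upper bound $\tfrac{e^{K\tau}}{\tau}\int_0^\tau\mu_k(B_0[f(t+s,\cdot)])\,\d s$.

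The convergence $\tfrac{1}{\tau}\int_0^\tau\mu_k(B_0[f(t+s,\cdot)])\,\d s\to\mu_k(B_0[f(t,\cdot)])$ as $\tau\to0^+$ is the last point requiring care, and is the main subtle step. When the mass $\mu_0(f(t,\cdot))>0$, formula~\eqref{expansion-moment-B0} expresses $\mu_k(B_0[f(s,\cdot)])$ as a polynomial combination of the continuous moments $\mu_j(f(s,\cdot))$ for $0\leqslant j\leqslant k$ divided by the continuous and (locally) nonvanishing mass $\mu_0(f(s,\cdot))$, so $s\mapsto\mu_k(B_0[f(s,\cdot)])$ is continuous at $t$; the degenerate case $\mu_0(f(t,\cdot))=0$ is handled by the uniqueness in Theorem~\ref{thm-existence-uniqueness} combined with Proposition~\ref{prop-semigroup}, which forces $f(t+s,\cdot)\equiv0$ for $s\geqslant0$, so that $\mu_k(B_0[f(t+s,\cdot)])\equiv0$ near $0$. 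In all cases, the average of this continuous function over $[0,\tau]$ converges to its value at $0$, yielding the estimate $\limsup\leqslant\mu_k(B_0[f(t,\cdot)])$ for the second term and hence \eqref{eq-subsolution-moments-f}.
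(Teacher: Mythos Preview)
Your proof is correct and follows essentially the same route as the paper: extend~\eqref{weak-form-h} to the test function~$x^k$ via truncation, deduce continuity of~$\mu_k(f(t,\cdot))$ by dominated convergence on the resulting integral representation, and derive the differential inequality from the semigroup identity together with~$e^{-m(x)\tau}\leqslant e^{K\tau}$ and the continuity of~$s\mapsto\mu_k(B_0[f(s,\cdot)])$ (via~\eqref{expansion-moment-B0}). The only cosmetic differences are that the paper bounds the first increment term directly by~$\frac{e^{K\tau}-1}{\tau}\mu_k(f(t,\cdot))$ rather than passing through reverse Fatou and the possibly ill-defined quantity~$\int x^k(-m(x))f(t,x)\,\d x$, and it does not separately discuss the zero-mass case (which is later shown not to occur when~$f^0\neq0$).
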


\begin{proof}
  We fix~$T>0$. For~$0\leqslant t<t+\tau<T$, and~$h_N$ as in~\eqref{def-hN}, we write, thanks to~\eqref{weak-form-h} and to Proposition~\ref{prop-semigroup} :
\begin{align*}
    \int_\R h_N (x) f(t+\tau,x)\d x &= \int_\R e^{-m(x)\tau} h_N (x) f(t,x)\d x \\
    &\hspace{2cm}+    \int_0^{\tau} \int_\R e^{-m(x)(\tau-s)} h_N (x) B_0[f(t+s, \cdot)](x) \d x \d s.
  \end{align*}
  By dominated convergence as~$N\to+\infty$, thanks to Lemma~\ref{moment-B0}, Proposition~\ref{moment-prop} and the fact that~$m(x)\geqslant-K$, we obtain
 \begin{equation}\label{eq-muk-weak1}
    \mu_k(f(t+\tau,\cdot)) = \int_\R e^{-m(x)\tau} x^k f(t,x)\d x  + \int_0^{\tau} \int_\R e^{-m(x)(\tau-s)} x^k B_0[f(t+s, \cdot)](x) \d x \d s.
  \end{equation}

  In particular, by setting~$t=0$ and writing~$t$ instead of~$\tau$, we get that for all~$t\geqslant0$, 
 \begin{equation}\label{eq-muk-weakt0}
    \mu_k(f(t,\cdot)) = \int_\R e^{-m(x)t} x^k f^0(x)\d x + \int_0^{t} \int_\R e^{-m(x)(t-s)} x^k B_0[f(s, \cdot)](x) \d x \d s.
  \end{equation}

  We write this expression under the form~$\mu(f(t,\cdot)) = y(t) + \int_0^Tz(t,s)\d s$, by setting
  \begin{align*}
    y(t)&=\int_\R e^{-m(x)t} x^k f^0(x)\d x,\\
    z(t,s)&=\begin{cases}\int_\R e^{-m(x)(t-s)} x^k B_0[f(s, \cdot)](x) \d x& \text{if }s<t\\0 & \text{if }s\geqslant t
    \end{cases}.
  \end{align*}

  By dominated convergence, since~$e^{-m(x)t} |x|^k \leqslant e^{KT}|x|^k$ which is~$f^0$ integrable, we obtain that~$y$ is continuous on~$[0,T]$. Similarly~$t\mapsto z(t,s)$ is continuous on~$(s,T]$ (thanks to Proposition~\ref{moment-prop}, \eqref{estimate:moment_B0} in Lemma~\ref{moment-B0} and the fact that~$m(x)\geqslant-K$) and we also have
  \[|z(t,s)|\leqslant e^{K(t-s)}\int_\mathbb{R}|x|^kf(s,x)\d x\leqslant e^{K(t-s)+(K+1)s}\int_\mathbb{R}|x|^kf^0(x)\d x.\]
  Consequently,~$z(t,s)$ is uniformly bounded on~$[0,T]^2$ by some~$C>0$.

  Obviously,~$t\mapsto z(t,s)$ is also continuous on~$[0,s)$, since it is equal to zero.
  We now want to prove that~$t\mapsto\int_0^Tz(t,s)\d s$ is continuous. 
  We fix~$t_0\in[0,T]$, and we have that for all~$s\neq t_0$,~$z(t,s)$ converges to~$z(t_0,s)$ as~$t\to t_0$, by the continuity properties of~$z$. So by dominated convergence, we get that~$\int_0^Tz(t,s)\d s$ converges to~$\int_0^Tz(t_0,s)\d s$ as~$t\to t_0$. This ends the proof of the time continuity of~$\mu_k(f(t,\cdot))$.

  We now suppose that~$k$ is even. From~\eqref{eq-muk-weak1}, we obtain
  \begin{align*}
    \mu_k(f(t+\tau,\cdot))&\leqslant\mu_k(f(t,\cdot))e^{K\tau}+\int_0^{\tau} e^{K(\tau-s)}\mu_k(B_0[f(t+s,\cdot)])\d s\nonumber \\
    &\leqslant e^{K\tau}\Big(\mu_k(f(t,\cdot)) + \int_0^{\tau} \mu_k(B_0[f(t+s,\cdot)])\d s\Big). 
  \end{align*}
 We therefore obtain
  \begin{equation}\label{variation-muk}\frac{\mu_k(f(t+\tau,\cdot))-\mu_k(f(t,\cdot))}{\tau}\leqslant\frac{e^{K\tau}-1}{\tau}\mu_k(f(t,\cdot))+\frac{e^{K\tau}}{\tau}\int_0^{\tau} \mu_k(B_0[f(t+s,\cdot)])\d s.
  \end{equation}
  By the formula~\eqref{expansion-moment-B0} and the fact that all moments of~$f$ of order~$j\leqslant k$ are continuous in time, we get that~$t\mapsto\mu_k(B_0[f(t,\cdot)])$ is continuous, and therefore the right-hand side of~\eqref{variation-muk} converges to~$K\mu_k(f(t,\cdot))+\mu_k(B_0[f(t,\cdot)])$ as~$\tau\to0$, and this proves~\eqref{eq-subsolution-moments-f}.
\end{proof}

\subsection{Centered moments of the normalized population density}

To obtain estimates on the centered moments of~$f$, the inequalities provided by 
Proposition~\ref{prop-continuity-moments-bounds} will not be sufficient ; we will 
first need a more precise description of the evolution of the moments~$\mu_k$ 
(in particular we will have to compute the time derivatives of~$\mu_0$ and~$\mu_1$). 
To this aim, we make the stronger assumption~\ref{assumption-growth} on the growth of~$m$.

Under this assumption, if~$f^0$ has finite second moment, then the evolution of quantities of the form~$\int h(x)f(t,x)\d x$ (when~$h$ is measurable and bounded) provides a simpler weak formulation of~\eqref{model}, as stated in the next proposition.

\begin{proposition} \label{prop-weak-formulation2-f} Under assumption~\ref{assumption-growth}, we suppose that~$f^0$ has finite second moment and denote by~$f$ the global solution to the model~\eqref{model} given by Theorem~\ref{thm-existence-uniqueness}. Then for all measurable and bounded function~$h$, the quantity~$\int_\mathbb{R}h(x)f(t,x)\d x$ is differentiable in time and we have
\begin{equation}\label{weak-formula2}
    \ddt\Big(\int_\mathbb{R}h(x)f(t,x)\d x\Big)=\int_\R h(x)\big(B_0[f(t,\cdot)](x) -m(x)f(t,x)\big)\d x.
\end{equation}
\end{proposition}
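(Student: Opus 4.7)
The plan is to compute the right derivative of $F(t) := \int_\mathbb{R} h(x) f(t,x)\,\d x$ using the semigroup formula of Proposition~\ref{prop-semigroup} and the weak form~\eqref{weak-form-h}, show it coincides with the announced right-hand side, verify its continuity, and then invoke Lemma~\ref{lemma-dplus} to upgrade to full differentiability.

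By Proposition~\ref{prop-semigroup} and~\eqref{weak-form-h} applied to the bounded measurable function~$h$, for any $t \geqslant 0$ and $\tau > 0$ I would start from
\[F(t+\tau) - F(t) = \int_\mathbb{R} h(x)\bigl(e^{-m(x)\tau} - 1\bigr) f(t,x)\,\d x + \int_0^{\tau} \int_\mathbb{R} h(x)\, e^{-m(x)(\tau - s)}\, B_0[f(t+s,\cdot)](x)\,\d x\,\d s.\]
Dividing the first term by~$\tau$, the pointwise limit $\bigl(e^{-m(x)\tau} - 1\bigr)/\tau \to -m(x)$ together with the bound $|(e^{-m(x)\tau}-1)/\tau| \leqslant |m(x)| e^{K\tau}$ (which uses $m \geqslant -K$) yields the limit $-\int_\mathbb{R} h(x) m(x) f(t,x)\,\d x$ by dominated convergence: Assumption~\ref{assumption-growth} provides $|m| \leqslant K + C(1+x^2)$, and Proposition~\ref{moment-prop} with $k=2$ ensures integrability of $|m|f(t,\cdot)$. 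For the second term, substituting $s = \tau u$ rewrites it as $\int_0^1 \int h(x)\, e^{-m(x)\tau(1-u)} B_0[f(t+\tau u, \cdot)](x)\,\d x\,\d u$, and dominated convergence in~$u$ then gives $\int h(x) B_0[f(t,\cdot)](x)\,\d x$: continuity of $s \mapsto \int h\, B_0[f(s,\cdot)]$ follows from the contraction estimate~\eqref{hB0-TV} and the TV-continuity of~$f$, the correction due to $e^{-m(x)\tau(1-u)} - 1$ is dominated by $\|h\|_\infty |m(x)|\tau e^{K\tau}$, and $\int |m(x)| B_0[f(s,\cdot)](x)\,\d x$ is uniformly bounded on~$[0,T]$ thanks to Lemma~\ref{moment-B0} combined with Proposition~\ref{moment-prop}. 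This identifies $\frac{\d^+}{\d t} F(t)$ with $\int h(x)\bigl(B_0[f(t,\cdot)](x) - m(x) f(t,x)\bigr)\d x$.

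I would then establish continuity in~$t$ of this right derivative. The map $t \mapsto \int h\, B_0[f(t,\cdot)]$ is continuous thanks to~\eqref{hB0-TV}. For $t \mapsto \int h(x) m(x) f(t,x)\,\d x$, I would extend the Duhamel identity~\eqref{weak-form-h} from bounded measurable functions to the polynomially controlled map $hm$ (satisfying $|hm| \leqslant \|h\|_\infty (K + C(1+x^2))$) by truncating at level~$N$ and passing to the limit via dominated convergence, using the uniform finiteness on any bounded interval of the second moments of $f^0$, of $f(s,\cdot)$, and of $B_0[f(s,\cdot)]$ (Lemma~\ref{moment-B0} plus Proposition~\ref{moment-prop}). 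A further dominated convergence argument in~$t$ on the resulting expression delivers the required continuity.

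To conclude, I would apply Lemma~\ref{lemma-dplus} to the continuous function $G(t) := F(t) - \int_0^t \frac{\d^+}{\d t} F(s)\,\d s$. Since $\frac{\d^+}{\d t} F$ is continuous, the fundamental theorem of calculus gives $\frac{\d^+}{\d t} G \equiv 0$, and applying Lemma~\ref{lemma-dplus} to $G$ and to $-G$ in turn makes $G$ both nonincreasing and nondecreasing, hence constant. This proves $F \in C^1([0,T])$ with $F'(t) = \frac{\d^+}{\d t} F(t)$, which is exactly~\eqref{weak-formula2}. The main technical hurdle throughout is handling~$m$ under only a quadratic growth bound: every dominated-convergence or Fubini-type step must be justified by carefully combining Assumption~\ref{assumption-growth}, Lemma~\ref{moment-B0} and Proposition~\ref{moment-prop} to dominate each integrand by a function integrable against $f$, $B_0[f]$, or $f^0$.
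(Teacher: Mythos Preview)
Your argument is correct and complete, but it takes a different route from the paper. The paper differentiates the Duhamel formula~\eqref{weak-form-h} directly: it applies dominated convergence to the first term~$\int h(x)e^{-m(x)t}f^0(x)\d x$ and the Leibniz rule to the time integral~$\int_0^t\int h(x)e^{-m(x)(t-s)}B_0[f(s,\cdot)](x)\d x\,\d s$, which immediately produces the boundary term~$\int h\,B_0[f(t,\cdot)]$ and a differentiated integrand; the latter two pieces are then identified with~$-\int h\,m\,f(t,\cdot)$ by extending~\eqref{weak-form-h} to~$hm$ via truncation. This yields full differentiability in one stroke, without any right-derivative machinery.

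Your approach instead uses the semigroup property of Proposition~\ref{prop-semigroup} to write the increment~$F(t+\tau)-F(t)$ in a form where the limit as~$\tau\to0^+$ is transparent, then separately establishes continuity of the resulting right derivative and invokes Lemma~\ref{lemma-dplus} to upgrade to~$C^1$. This avoids the Leibniz differentiation-under-the-integral step (which some readers find delicate), at the cost of an extra real-analysis argument at the end. Both proofs ultimately rest on the same key ingredient---extending the weak formulation to the unbounded test function~$hm$ via truncation, justified by Assumption~\ref{assumption-growth}, Lemma~\ref{moment-B0} and Proposition~\ref{moment-prop}---so the difference is one of packaging rather than of substance.
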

\begin{proof}
  From Proposition~\ref{moment-prop} and Assumption~\ref{assumption-growth}, for all measurable and bounded function~$h$, 
   we want to obtain dominations of the time derivatives of the integrands appearing in~\eqref{weak-form-h} on~$[0,T]$. 
   First of all, we have
   \[ |h(x)m(x)e^{-m(x)t}|\leqslant Ce^{KT}(1+x^2)\sup_{\mathbb{R}}|h|,\]
   and the right-hand side is~$f^0$-integrable, therefore by dominated convergence theorem we get that the term~$\int_\mathbb{R}h(x)e^{-m(x)t}f^0(x)\d x$ is differentiable in time. We also have for all~$s\in[0,T]$, and for all~$t\geqslant s$,
   \[|h(x)m(x)e^{-m(x)(t-s)}|\leqslant Ce^{KT}(1+x^2)\sup_{\mathbb{R}}|h|,\]
   which is~$B_0[f(s,\cdot)]$-integrable for all~$s\in[0,T]$, thanks 
   to Lemma~\ref{moment-B0} and Proposition~\ref{moment-prop}. 
   Therefore by dominated convergence the derivative of~$\int_\mathbb{R}h(x)e^{-m(x)(t-s)}B_0[f(s,\cdot)](x)\d x$ 
   with respect to variable~$t$ is well defined, and uniformly bounded, on~$[s,T]$. By Leibniz rule on the formula~\eqref{weak-form-h}, the derivative of~$\int_\R h(x)f(t,x)\d x$ is well defined for all~$t>0$ and we have
\begin{align*}
  \ddt \left( \int_\R h(x)f(t,x)\d x\right) = &- \int_\R h(x) m(x)e^{-m(x)t}f^0(x)\d x
  + \int_\R h(x) B_0[f(t,\cdot)](x)\d x \\
   &- \int_0^t \int_\R h(x)m(x) e^{-m(x)(t-s)}B_0[f(s,\cdot)](x)\d x \d s.
\end{align*}
Once more, by dominated convergence theorem, we can 
approximate~$h(x)m(x)$ by a bounded measurable function~$\widetilde{h}_N(x)$ and 
obtain, applying~\eqref{weak-form-h} to~$\widetilde{h}_N$, that in the limit~$N\to\infty$, 
we have
\[\begin{split}\int_\R h(x) m(x)f(s,x) \d x=\int_\mathbb{R}h(x)&m(x)e^{-m(x)t}f^0(x)\d x \\
    &+ \int_0^t\int_\mathbb{R}h(x)m(x)e^{-m(x)(t-s)}B_0[f(s,\cdot)](x)\d x\, \d s.
  \end{split}\]
We therefore obtain
\begin{equation*}
  \ddt \left( \int_\R h(x)f(t,x)\d x\right) = \int_\R h(x) B_0[f(s,\cdot)](x)\d x
  - \int_\R h(x) m(x)f(s,x) \d x.
\end{equation*}
Hence, the second weak formulation~\eqref{weak-formula2}.
\end{proof}

\begin{remark} We chose in Assumption~\ref{assumption-growth} to have at most quadratic growth of the mortality rate~$m$. If instead we suppose that it does not grow faster that~$|x|^k$, the same reasoning would apply, and Proposition~\ref{prop-weak-formulation2-f} would still apply, replacing the requirement on~$f^0$ by asking~$\int_{\R}|x|^kf^0(x)\d x$ to be finite.
\end{remark}

We are now ready to give the evolution equations for the moments of the solution~$f$, which we state in the 
following proposition.
\begin{proposition}\label{prop-evolution-moments-f} Let~$k\in\mathbb{N}$. Under Assumption~\ref{assumption-growth}, if~$f^0$ has a moment of order~$k+2$ and~$f$ is the unique weak solution to the model~\eqref{model}, then~$\mu_k(f)$ is continuously differentiable in time and we have for all~$t\geqslant0$
  \begin{equation}\label{eq-evolution-moments-f}
    \ddt \mu_k(f(t,\cdot))=\sum_{j=0}^k\binom{k}{j}\frac{\mu_j(f(t,\cdot))\mu_{k-j}(f(t,\cdot))}{2^k\mu_0(f(t,\cdot))}-\int_\mathbb{R}m(x)x^kf(t,x)\d x.
  \end{equation}
\end{proposition}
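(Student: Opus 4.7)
The plan is to differentiate the integral representation of $\mu_k(f(t,\cdot))$ established during the proof of Proposition~\ref{prop-continuity-moments-bounds}, namely equation~\eqref{eq-muk-weakt0}:
\[\mu_k(f(t,\cdot)) = \int_\R e^{-m(x)t} x^k f^0(x)\d x + \int_0^t \int_\R e^{-m(x)(t-s)} x^k B_0[f(s,\cdot)](x)\d x\, \d s.\]
The assumption $\mu_{k+2}(f^0)<+\infty$ is crucial here: via Proposition~\ref{moment-prop} and Lemma~\ref{moment-B0}, it yields a locally uniform-in-time bound on $\mu_{k+2}(f(s,\cdot))$ and on $\mu_{k+2}(B_0[f(s,\cdot)])$, and, combined with the estimate $|m(x)x^k|\leqslant(K+C(1+x^2))|x|^k\leqslant C'(1+|x|^{k+2})$ coming from Assumption~\ref{assumption-growth}, it furnishes the $L^1$ dominations needed to differentiate under the two integral signs.

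Applying Leibniz's rule then yields
\[\ddt\mu_k(f(t,\cdot)) = \mu_k(B_0[f(t,\cdot)]) - \int_\R m(x)e^{-m(x)t}x^k f^0(x)\d x - \int_0^t \int_\R m(x)e^{-m(x)(t-s)}x^k B_0[f(s,\cdot)](x)\d x\, \d s.\]
To identify the sum of the two negative terms with $-\int_\R m(x)x^k f(t,x)\d x$, the strategy is to mimic the end of the proof of Proposition~\ref{prop-weak-formulation2-f}: approximate $x\mapsto m(x)x^k$ by a sequence of bounded measurable truncations $\widetilde h_N$, apply the weak formulation~\eqref{weak-form-h} to each, and pass to the limit by dominated convergence, relying once more on the propagated moment of order $k+2$. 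The remaining boundary term $\mu_k(B_0[f(t,\cdot)])$ is then expanded via formula~\eqref{expansion-moment-B0} of Lemma~\ref{moment-B0}, producing exactly~\eqref{eq-evolution-moments-f}.

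Continuity of the derivative follows from two observations: each $\mu_j(f(t,\cdot))$ for $0\leqslant j\leqslant k$ is continuous by Proposition~\ref{prop-continuity-moments-bounds}, so the explicit sum on the right-hand side of~\eqref{eq-evolution-moments-f} is continuous (noting that $\mu_0(f(t,\cdot))$ stays positive, as is readily seen from~\eqref{eq-muk-weakt0} with $k=0$); and the continuity of $t\mapsto\int_\R m(x)x^k f(t,x)\d x$ is again obtained by the same truncation/dominated-convergence argument. The main technical obstacle is the rigorous justification of the differentiation under the integral, since the time integral runs up to $t$ itself: one needs both a uniform-in-$(t,s)$ domination of $\partial_t$ of the integrand on a bounded time window and continuity at $s=t$ of the boundary contribution $\int_\R x^k B_0[f(t,\cdot)](x)\d x$, and both control ultimately rest on the propagation of the $(k+2)$-moment guaranteed by Assumption~\ref{assumption-growth}.
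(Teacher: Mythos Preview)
Your proposal is correct and follows essentially the same approach as the paper. The only organizational difference is that the paper first applies Proposition~\ref{prop-weak-formulation2-f} to the bounded truncations~$h_N$, integrates in time, and passes to the limit to obtain directly $\mu_k(f(t,\cdot)) = \mu_k(f^0) + \int_0^t \big(\mu_k(B_0[f(s,\cdot)]) - \int_\R m(x)x^kf(s,x)\d x\big)\d s$, whereas you differentiate the representation~\eqref{eq-muk-weakt0} via Leibniz's rule and then identify the selection term; the domination arguments, the use of the $(k{+}2)$-moment, and the continuity verification are the same in both routes.
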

\begin{proof}
Let us remark that by Definition~\ref{def-weak-solution} we obtain 
that~$\mu_0(f(t,\cdot))>0$ for any time. If not, we would 
have~$\int_\mathbb{R}e^{-m(x)t}f^0(x)\d x=0$ and therefore~$e^{-m(x)t}$ 
would be zero almost everywhere with respect to the measure~$f^0$, 
giving a contradiction.
By Proposition~\ref{prop-weak-formulation2-f}, using again the 
function~$h_N$ defined in~\eqref{def-hN}, we obtain 
\begin{equation*}
  \begin{split} \int_\mathbb{R}h_N(x)f(t,x)\d x = \int_\mathbb{R}&h_N(x)f^0(x)\d x \\
    &+ \int_0^t \Big(\int_\mathbb{R}h_N(x)B_0[f(s, x)] \d x- \int_\mathbb{R}h_N(x)m(x)f(s,x)\d x\Big) \d s.
  \end{split}
\end{equation*}
Thanks to Proposition~\ref{moment-prop} and to Assumption~\ref{assumption-growth},~$m(\cdot)f(s,\cdot)$ 
has a moment of order~$k$, uniformly on~$[0,t]$, therefore we obtain 
by dominated convergence theorem 
\begin{equation*}
  \mu_k(f(t,\cdot)) = \mu_k(f^0) + \int_0^t \big(\mu_k (B_0[f(s, \cdot)]) - \mu_k(m(\cdot)f(s,\cdot)) \big) \d s.
\end{equation*}
Then, as in the proof of Proposition~\eqref{prop-continuity-moments-bounds} for the time continuity of~$\mu_k(f(t,\cdot))$, we get similarly as in~\eqref{eq-muk-weakt0} that
 \[    \mu_k(m(\cdot)f(t,\cdot)) = \int_\R e^{-m(x)t} m(x) x^k  f^0(x)\d x + \int_0^{t} \int_\R e^{-m(x)(t-s)} m(x) x^k B_0[f(s, \cdot)](x) \d x \d s. \]
And we proceed in the same way, by dominated convergence thanks to 
Assumption~\ref{assumption-growth}, to prove that~$\mu_k(m(\cdot)f(t,\cdot))$ 
is continuous in time. Thanks to Propositions~\ref{moment-prop} 
and~\eqref{prop-continuity-moments-bounds}, we have the time 
continuity of~$\mu_k (B_0[f(t, \cdot)])$ and therefore~$\mu_k(f(t,\cdot))$ is continuously differentiable in time 
and satisfies~\eqref{eq-evolution-moments-f}.
\end{proof}

From now on, we suppose that Assumption~\ref{assumption-growth} is 
satisfied and that~$f^0$ (of positive mass~$\rho^0$) has a finite second moment. 
We are interested in the normalized population distribution. 
Let~$\rho(t)$ be the total population mass at time~$t$ 
\begin{equation*}
  \rho(t) = \mu_0(f) = \int_\mathbb{R} f(t,x) \, \d x.
\end{equation*}
From Proposition~\ref{prop-weak-formulation2-f}, we obtain the 
evolution equation for~$\rho(t)$ 
\begin{equation}
  \ddt \rho(t) = \rho(t) - \int_\mathbb{R} m(x) f(t,x)\, \d x.\label{dtrho}
\end{equation}
As noted in the proof of Proposition~\ref{prop-evolution-moments-f}, we have that~$\rho$ is positive for all time, 
and we can therefore consider the normalized population distribution $g$ that is defined in~\eqref{def-g}.

Thus, from Proposition~\ref{prop-weak-formulation2-f}, we obtain a weak formulation for the 
following equation on~$g$ 
\begin{equation*}
  \partial_t g(t,x) = B_0[g](t,x) - m(x)g(t,x) - \left(1- \int_\mathbb{R} m(z)g(t,z)\,\d z \right) g(t,x).
\end{equation*}

\begin{remark}
  This evolution equation for the normalized population distribution 
  has the same form as the replicator-mutator equation, with the linear mutator term 
  replaced by a sexual reproduction one. Replicator-mutator type equations have 
  been quite studied in the form of nonlocal integro-differential equations (see e.g.~\cite{Alf-Car-2014,Alf-Car-2017,Gil-Ham-al-2017}).
  Recently, the authors of~\cite{Clo-Gab-2022} have proved concentration results and new convergence estimates 
  in the non-singular case for measure solutions of the 
  “\emph{house of cards}” replicator-mutator model. 
\end{remark}

Then, we can also define the mean trait, or center of mass, which we denote by $\overline{x}$ and which is formulated 
by~\eqref{def-xbar}. Thus, we can focus on the study of the centered moments $M_k$ and the centered 
variations of the selection $S_k$, for~$k\in\mathbb{N}$, which formulations are given in~\eqref{def-Mk}-\eqref{def-Sk}.

Indeed, the moment~$M_k$ is well-defined for all time whenever~$f^0$ has a moment of order~$k$ and~$S_k$ is well-defined if~$f^0$ has a moment of order~$k+2$. From now on, we will write~$M_k^0=M_k(0)$.

In particular we have~$M_0(t)=1$ and~$M_1(t)=0$ for all time. 
Using~\eqref{eq-evolution-moments-f} (for~$k=1$) and~\eqref{dtrho} 
we obtain the equation of evolution of~$\overline{x}$ given by~\eqref{dtxbar}
whenever~$f^0$ has a moment of order~$3$ :
\begin{align*}
  \frac{\d \overline{x}}{\d t} = \ddt\big(\frac{\mu_1(f(t,\cdot))}{\rho(t)}\big)&=-\int_\mathbb{R}m(x)xg(t,x)\d x +\frac{\mu_1(f(t,\cdot))}{\rho(t)}\int_\mathbb{R}m(x)g(t,x)\d x \\
  &= \int m(x)(\overline{x}(t)-x)g(t,x)\d x = -S_1(t),
\end{align*}
which was claimed in~\eqref{dtxbar}.

From now on, we actually assume that~$f^0$ has a moment of order~$4$ 
(we will only compute estimates for even order moments), so that this 
mean trait~$\overline{x}$ is differentiable, and we can then compute 
the evolution of the centered moments~$M_k$, thanks 
to Proposition~\ref{prop-evolution-moments-f} (or 
Proposition~\ref{prop-continuity-moments-bounds}) and to 
the evolutions~\eqref{dtrho} and~\eqref{dtxbar} of~$\rho$ and~$\overline{x}$.

The next proposition states the differential equations and inequalities that~$M_k$ satisfies. They 
will be used in the following sections to prove stability results of Dirac masses. Notice that when~$f^0$ has a moment of order~$4$, this proposition provides the results claimed in~\eqref{dtM2}-\eqref{dtM4} for the evolutions of the moments~$M_2$ and~$M_4$.
\begin{proposition}\label{prop-dtMk}
  Let~$k\geqslant2$. Under Assumption~\ref{assumption-growth}, if~$f^0$ has a moment of order~$k+2$, then~$M_k$ is continuously differentiable in time and we have
  \begin{equation}\label{dtMk}
    \frac{\d}{\d t}M_k=-\big(1-\frac1{2^{k-1}}\big)M_k+\frac1{2^k}\sum_{i=2}^{k-2}\binom{k}{i}M_{k-i}M_i -S_k+S_0M_k+k S_1 M_{k-1}.
  \end{equation}
  
  If~$k\geqslant4$ is even and~$f^0$ has only a moment of order~$k$, then~$M_k$ is continuous in time and we have for all~$t\geqslant0$,
  \begin{equation}\label{subsolMk}
    \frac{\d^+}{\d t}M_k\leqslant-\Big(\frac12-\frac1{2^{k-1}}-S_0+\eta(\overline{x})\Big)M_k + \frac1{2^k}\sum_{i=2}^{k-2}\binom{k}{i}M_{k-i}M_i +k S_1 M_{k-1}.
  \end{equation}
  \end{proposition}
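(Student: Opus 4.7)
My plan is to derive both~\eqref{dtMk} and~\eqref{subsolMk} from the weak formulation of the evolution of~$g$ with time-dependent polynomial test function~$\varphi(t,x)=(x-\overline{x}(t))^k$. Under the moment hypothesis of order~$k+2$, Proposition~\ref{prop-evolution-moments-f} gives continuous differentiability of each~$\mu_j(f(t,\cdot))$ for~$j\leqslant k$, while~\eqref{dtrho} and~\eqref{dtxbar} give the exact evolutions of~$\rho$ and~$\overline{x}$ (all well defined since~$k\geqslant2$ and~$f^0$ has a moment of order at least~$3$). Combining these via the chain and quotient rules, or equivalently by differentiating~$M_k(t)=\int(x-\overline{x}(t))^k g(t,x)\,\d x$ directly with~$\partial_t g=B_0[g]-mg+\big(\int mg-1\big)g$, I obtain
\[\ddt M_k=kS_1M_{k-1}+\int_\mathbb{R}(x-\overline{x})^kB_0[g]\,\d x-\int_\mathbb{R}(x-\overline{x})^km(x)g(x)\,\d x+\Big(\int_\mathbb{R}m(z)g(z)\,\d z-1\Big)M_k,\]
the first term coming from the time derivative of~$\varphi$ together with~$\dot{\overline{x}}=-S_1$.

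Next, I would compute each integral. Using the weak form of~$B_0$ given by~\eqref{weak-form-B0} with~$h(x)=(x-\overline{x})^k$ and expanding the binomial,
\[\int_\mathbb{R}(x-\overline{x})^kB_0[g]\,\d x=\frac{1}{2^k}\sum_{j=0}^k\binom{k}{j}M_jM_{k-j}=\frac{1}{2^{k-1}}M_k+\frac{1}{2^k}\sum_{i=2}^{k-2}\binom{k}{i}M_{k-i}M_i,\]
where the last equality uses~$M_0=1$ and~$M_1=0$. For the selection, the centered decomposition~$m(x)=m(\overline{x})+(m(x)-m(\overline{x}))$ yields~$\int(x-\overline{x})^km\,g\,\d x=m(\overline{x})M_k+S_k$ and~$\int mg\,\d x=m(\overline{x})+S_0$. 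Adding all contributions, the~$m(\overline{x})M_k$ terms cancel and we recover exactly~\eqref{dtMk}.

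For~\eqref{subsolMk}, the obstruction is that~$S_k$ is not \emph{a priori} defined when only~$\mu_k(f^0)$ is finite (since~$m$ may grow quadratically), and Proposition~\ref{prop-evolution-moments-f} is not available at order~$k$. The key observation is that for~$k\geqslant4$ even,~$(x-\overline{x})^k\geqslant0$ and~$m(x)-m(\overline{x})\geqslant\inf_\mathbb{R}m-m(\overline{x})=\eta(\overline{x})-\tfrac{1}{2}$, so that
\[S_k\geqslant\big(\eta(\overline{x})-\tfrac{1}{2}\big)M_k,\qquad\text{hence}\qquad-S_k\leqslant\big(\tfrac{1}{2}-\eta(\overline{x})\big)M_k.\]
Substituting this upper bound in place of~$-S_k$ in the formal expression of~$\ddt M_k$ yields exactly the right-hand side of~\eqref{subsolMk}. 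To make this rigorous at the level of~$\frac{\d^+}{\d t}$, I would, as in the proof of Proposition~\ref{prop-continuity-moments-bounds}, start from the integral formulation~\eqref{eq-muk-weak1} applied to~$\mu_k(f)$ combined with the exact evolutions of~$\rho$ and~$\overline{x}$, express the difference~$M_k(t+\tau)-M_k(t)$ in a mixed form (using the subsolution inequality~\eqref{eq-subsolution-moments-f} for~$\mu_k(f)$ and the sign-based upper bound for the~$S_k$-like contribution), divide by~$\tau>0$, and pass to~$\limsup_{\tau\to0^+}$.

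The main obstacle is this last step: all other terms on the right-hand side of~\eqref{subsolMk} involve only moments of order at most~$k$, so they are continuous in~$t$ by Proposition~\ref{prop-continuity-moments-bounds} and pass to the limit readily, but the~$\mu_k(f)$ contribution must be handled via its subsolution inequality rather than an equality, and the~$-S_k$ contribution must be replaced by its upper bound exploiting the parity of~$k$ \emph{before} taking~$\limsup$. Once this is carried out, Lemma~\ref{lemma-dplus} will convert the resulting inequality for~$\frac{\d^+}{\d t}M_k$ into the usable Grönwall-type estimates needed in the subsequent sections.
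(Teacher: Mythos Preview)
Your computation for~\eqref{dtMk} is correct and matches the paper's result; the formal derivation via~$\partial_t g=B_0[g]-mg+\big(\int mg-1\big)g$ with the time-dependent test function~$(x-\overline{x}(t))^k$ is equivalent to what the paper does, just organized differently. The paper instead freezes the center at time~$t_0$ by introducing the translated density~$\widetilde{f}(t,x)=f(t,x+\overline{x}(t_0))$, so that~$\rho(t_0)M_k(t_0)=\mu_k(\widetilde{f}(t_0,\cdot))$ exactly, and then applies Proposition~\ref{prop-evolution-moments-f} to~$\widetilde{f}$. Both routes lead to the same algebra.

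Where the translation trick earns its keep is in the second part. When~$f^0$ has only a moment of order~$k$, you correctly note that the obstruction is handling the~$\mu_k$ contribution via~\eqref{eq-subsolution-moments-f} rather than an equality. However, your plan to ``use~\eqref{eq-muk-weak1} for~$\mu_k(f)$ combined with the exact evolutions of~$\rho$ and~$\overline{x}$'' hides a difficulty: expanding~$M_k$ in terms of the uncentered moments~$\mu_j(f)$ produces a term~$-k\overline{x}(t)\mu_{k-1}(f(t,\cdot))/\rho(t)$, and~$\mu_{k-1}(f)$ is \emph{not} known to be differentiable (it would require a moment of order~$k+1$), nor is~$k-1$ even, so~\eqref{eq-subsolution-moments-f} does not apply to it either. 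The paper sidesteps this by translating to~$x_0=\overline{x}(t_0)$: then~$\rho(t)M_k(t)-\mu_k(\widetilde{f}(t,\cdot))=-k(\overline{x}(t)-x_0)\mu_{k-1}(\widetilde{f}(t,\cdot))+\mathcal{O}(|\overline{x}(t)-x_0|^2)$, and since the coefficient~$\overline{x}(t)-x_0$ vanishes at~$t_0$ and is differentiable there, only the \emph{continuity} of~$\mu_{k-1}(\widetilde{f})$ is needed to get a genuine derivative of this difference at~$t_0$. The~$\frac{\d^+}{\d t}$ then sits entirely on~$\mu_k(\widetilde{f})$, where~\eqref{eq-subsolution-moments-f} applies with~$K=-\inf_{\mathbb{R}}m$, and the combination~$K+m(\overline{x})=\tfrac12-\eta(\overline{x})$ recovers exactly~\eqref{subsolMk}. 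Your sketch would work once you incorporate this freezing device (or an equivalent argument isolating the~$\mu_{k-1}$ term so that only its continuity is used); as written, the ``mixed form'' step is the place where a reader would ask for details.
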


\begin{proof} Let us first remark that if we fix~$x_0\in\mathbb{R}$, then~$\widetilde{f}=f(\cdot,\cdot+x_0)$ satisfies the same equation with~$m$ replaced by~$\widetilde{m}=m(\cdot+x_0)$.
  \begin{align*}
    \rho(t)M_k(t)&=\int_\mathbb{R}(x-x_0+x_0-\overline{x}(t))^kf(t,x)\d x=\sum_{i=0}^k\binom{k}{i}(x_0-\overline{x}(t))^i\mu_{k-i}(\widetilde{f}(t,\cdot))\\
              &=\mu_k(\widetilde{f}(t,\cdot))-k(\overline{x}(t)-x_0)\mu_{k-1}(\widetilde f(t,\cdot))+\mathcal{O}(|\overline{x}(t)-x_0|^2),
  \end{align*}
where the bounds for the term~$\mathcal{O}(|\overline{x}(t)-x_0|^2)$ are uniform on~$[0,T]$, thanks to Proposition~\ref{moment-prop}.
Therefore if~$x_0=\overline{x}(t_0)$ for some~$t_0\in[0,T)$, since~$t\mapsto\mu_{k-1}(\widetilde{f}(t,\cdot))$ is continuous 
in time (and in particular at~$t_0$), and using~\eqref{dtxbar}, we have that
\[\frac{\rho(t)M_k(t)-\mu_k(\widetilde{f}(t,\cdot))}{t-t_0}\underset{t\to t_0}{\longrightarrow}-k\frac{\d\overline{x}}{\d t}(t_0)\mu_{k-1}(\widetilde{f}(t_0,\cdot))=kS_1(t_0)\rho(t_0)M_{k-1}(t_0).\]
Therefore, setting~$y(t)=\rho(t)M_k(t)-\mu_k(\widetilde{f}(t,\cdot))$, we get that~$y$ is differentiable at~$t_0$ with
\[\frac{\d y}{\d t}\big|_{t=t_0}=kS_1(t_0)\rho(t_0)M_{k-1}(t_0).\]

When~$f$ has a moment of order~$k+2$ (initially), this is the same for~$\widetilde{f}$ and therefore we can use Proposition~\ref{prop-evolution-moments-f} to get the derivative of~$\mu_k(\widetilde{f}(t,\cdot))$ at~$t_0$. We obtain
\begin{align*}\frac{\d}{\d t}\mu_k(\widetilde{f}(t,\cdot))\Big|_{t=t_0}&=\sum_{j=0}^k\binom{k}{j}\frac{\mu_j(\widetilde{f}(t_0,\cdot))\mu_{k-j}(\widetilde{f}(t_0,\cdot))}{2^k\mu_0(\widetilde{f}(t_0,\cdot))}-\int_\mathbb{R}\widetilde{m}(x)x^k\widetilde{f}(t_0,x)\d x\\
  &=\frac1{2^k}\sum_{j=0}^k\binom{k}{j}\rho(t_0)M_j(t_0)M_{k-j}(t_0)-\rho(t_0)\big(S_k(t_0)+m(\overline{x}(t_0))M_{k}(t_0)\big).
\end{align*}
Since~$M_k(t)=\frac1{\rho(t)}(y(t)+\mu_k(\widetilde{f}(t,\cdot)))$ and since~$\rho$ is differentiable at~$t_0$, we get that
\[\frac{\d}{\d t}M_k\Big|_{t=t_0}=\frac1{\rho(t_0)}\frac{\d}{\d t}(y(t)+\mu_k(\widetilde{f}(t,\cdot)))\Big|_{t=t_0}-\frac{M_k(t_0)}{\rho(t_0)}\frac{\d \rho}{\d t}\Big|_{t=t_0}.\]
The derivative of~$\rho$ at~$t_0$ is given in~\eqref{dtrho} by~$\rho(t_0)\big(1-S_0(t_0)-m(\overline{x}(t_0))\big)$, 
therefore we obtain~\eqref{dtMk} at time~$t_0$, using the fact that~$M_0(t_0)=1$ and~$M_1(t_0)=0$. 
Since the right-hand side of~\eqref{dtMk} is continuous in time, as proved in Proposition~\ref{prop-evolution-moments-f}, we obtain that~$M_k$ is continuously differentiable in time.

When~$f$ has only (initially) a moment of order~$k$ with~$k$ even and~$k\geqslant4$, we use similarly Proposition~\ref{prop-continuity-moments-bounds} instead of~\ref{prop-evolution-moments-f} to get, by setting~$K=-\inf_{\mathbb{R}}m$, 
\begin{align*}\frac{\d^+}{\d t}\mu_k(\widetilde{f}(t,\cdot))\Big|_{t=t_0}&\leqslant\sum_{j=0}^k\binom{k}{j}\frac{\mu_j(\widetilde{f}(t_0,\cdot))\mu_{k-j}(\widetilde{f}(t_0,\cdot))}{2^k\mu_0(\widetilde{f}(t_0,\cdot))}+K\mu_k(\widetilde{f}(t_0,\cdot))\\
  &\leqslant\frac1{2^k}\sum_{j=0}^k\binom{k}{j}\rho(t_0)M_j(t_0)M_{k-j}(t_0)+\rho(t_0)KM_k(t_0).
\end{align*}
Since~$k\geqslant4$,~$\rho$ and~$y$ are still differentiable at~$t_0$, and since~$\rho$ is positive, we get that
\[\frac{\d^+}{\d t}M_k\Big|_{t=t_0}=\frac1{\rho(t_0)}\Big(\frac{\d y}{\d t}\Big|_{t=t_0}+\frac{\d^+\mu_k(\widetilde{f}(t,\cdot))}{\d t}\Big|_{t=t_0}\Big)-\frac{M_k(t_0)}{\rho(t_0)}\frac{\d \rho}{\d t}\Big|_{t=t_0}.\]
The computations are therefore similar to the previous ones, and we obtain
\[ \frac{\d^+}{\d t}M_k\leqslant-\big(1-\frac1{2^{k-1}}-S_0-K-m(\overline{x})\big)M_k + \frac1{2^k}\sum_{i=2}^{k-2}\binom{k}{i}M_{k-i}M_i +k S_1 M_{k-1},\]
which corresponds to~\eqref{subsolMk}.
\end{proof}

\begin{remark}\label{remark-subsol-m-Lispschitz} If we suppose that~$m$ grows at most linearly, we only need to have a second moment to control the derivative of~$\rho$, and therefore~\eqref{subsolMk} is also valid for~$k=2$.
\end{remark}
\subsection{Fourier formulation, and rescaled profile}

Before tackling the long time behavior of~$\overline x$ and of the centered moments~$M_k$, we finish 
this section by stating the equation satisfied by the Fourier transform of the centered and rescaled formulation of~$g(t, \cdot)$. 
This equation will be further investigated in Section~\ref{section-4}.

\bigskip

From Definition~\ref{def-gamma}, we have 
\begin{equation}\label{momentsgamma}
\int_\mathbb{R}  \gamma(t,x) \, \d x = 1, 
\int_\mathbb{R} x \gamma(t,x) \, \d x = 0, \text{ and }
\int_\mathbb{R} x^2 \gamma(t,x) \, \d x = 1.
\end{equation}

We also define the signed measure~$\varphi$
\begin{equation}\label{eq-def-eps}
  \varphi(t,x) := [m \big( \sqrt{M_2(t)} x + \overline{x} (t) \big)-m(\overline{x}(t))]\gamma(t,x),
\end{equation}

Now we prove the following result.
\begin{proposition}
  Under Assumption~\ref{assumption-growth}, if~$f^0$ has a moment of order~$4$, then~$\widehat{\gamma}$ is differentiable and we have the equation on~$\widehat{\gamma}$ 
\begin{equation}\label{evolutionGammahat}
  \partial_t\widehat{\gamma}(t,\xi)=\widehat{\gamma}(t,\tfrac{\xi}2)^2-\widehat{\gamma}(t,\xi)+\frac14\xi\partial_\xi\widehat{\gamma}(t,\xi)+R(t,\xi),
\end{equation}
with~$R(t,\xi)$ given by
\begin{equation}\label{eq-defR}
  R(t,\xi):=- iS_1(t)\frac{\xi}{\sqrt{M_2(t)}} \widehat{\gamma}(t,\xi) - \widehat{\varphi}(t, \xi) 
  + S_0(t)\widehat{\gamma}(t,\xi) + \frac{1}{2}\left(\frac{S_2(t) }{M_2(t)} - S_0(t)\right) \xi \partial_\xi \widehat{\gamma}(t,\xi).
\end{equation}
\end{proposition}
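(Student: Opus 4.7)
The plan is to derive the evolution of $\widehat{\gamma}$ by combining the explicit relation between $\widehat{\gamma}$ and $\widehat{g}$ with the PDE for $g$ and the evolution equations for $\overline{x}$ and $M_2$. The starting point is the identity
\begin{equation*}
\widehat{\gamma}(t,\xi)=e^{i\xi\overline{x}(t)/\sqrt{M_2(t)}}\,\widehat{g}\bigl(t,\xi/\sqrt{M_2(t)}\bigr),
\end{equation*}
obtained by the change of variables $y=\sqrt{M_2(t)}\,x+\overline{x}(t)$ in the Fourier integral for $\gamma$, using the definition~\eqref{def-gamma}.

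First I would establish the evolution equation for $\widehat{g}$. Since $f$ has a finite moment of order~$4$, Proposition~\ref{prop-weak-formulation2-f} applies (in particular with $h(x)=e^{-i\xi x}$, approximated by bounded real and imaginary parts), so dividing by $\rho(t)$ yields in the Fourier variable
\begin{equation*}
\partial_t\widehat{g}(t,\eta)=\widehat{g}(t,\eta/2)^{2}-\widehat{mg}(t,\eta)-\bigl(1-\mu(t)\bigr)\widehat{g}(t,\eta),
\end{equation*}
where $\mu(t)=\int m(z)g(t,z)\,\d z=m(\overline{x}(t))+S_0(t)$, and we used $\widehat{B_0[g]}(\eta)=\widehat{g}(\eta/2)^{2}$.

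Next, the key step is to differentiate $\widehat{\gamma}(t,\xi)$ in time using the chain rule, with $\frac{\d}{\d t}\overline{x}=-S_1$ from~\eqref{dtxbar} and the expression of $\frac{\d}{\d t}M_2$ from~\eqref{dtM2}; the fourth-moment assumption guarantees both $M_2$ and $\overline{x}$ are $C^{1}$ in time through Proposition~\ref{prop-dtMk} and~\eqref{dtxbar}, and that $\widehat{g}$ is $C^{2}$ in $\eta$, justifying all manipulations. Three contributions appear: the time derivative of the phase $e^{i\xi\overline{x}/\sqrt{M_2}}$, the time derivative of $\widehat{g}$ evaluated at the shifted argument, and a $\xi$-derivative of $\widehat{g}$ coming from the time-dependence of the rescaling. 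Systematically factoring $e^{i\xi\overline{x}/\sqrt{M_2}}$ into $\widehat{g}$ converts it into $\widehat{\gamma}$; in particular $e^{i\xi\overline{x}/\sqrt{M_2}}\widehat{g}(\xi/(2\sqrt{M_2}))^{2}=\widehat{\gamma}(\xi/2)^{2}$ because the phase factors on the two copies of $\widehat{g}$ add up to exactly the prefactor.

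The term $\widehat{mg}$ is then handled using the signed measure $\varphi$ from~\eqref{eq-def-eps}: an analogous change of variables gives
\begin{equation*}
e^{i\xi\overline{x}/\sqrt{M_2}}\,\widehat{mg}(t,\xi/\sqrt{M_2})=\widehat{\varphi}(t,\xi)+m(\overline{x})\,\widehat{\gamma}(t,\xi),
\end{equation*}
so that the $m(\overline{x})$ piece combines with $\mu(t)=m(\overline{x})+S_0$ to leave a clean $S_0\,\widehat{\gamma}$ correction. Finally, using $a'(t)/a(t)=-\tfrac{1}{4}-\tfrac{S_2}{2M_2}+\tfrac{S_0}{2}$ (from~\eqref{dtM2}), the $\xi\partial_\xi\widehat{\gamma}$ contribution splits into the unperturbed transport term $\tfrac{1}{4}\xi\partial_\xi\widehat{\gamma}$ plus the correction $\tfrac{1}{2}(\tfrac{S_2}{M_2}-S_0)\xi\partial_\xi\widehat{\gamma}$. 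Gathering all remainder terms yields exactly~\eqref{eq-defR}, and the unperturbed pieces assemble into~\eqref{evolutionGammahat}. The only real bookkeeping obstacle is checking that the cross-terms produced by differentiating the phase and by the chain-rule on $\widehat{g}$ cancel correctly (specifically, the two terms proportional to $i\xi\overline{x}\,M_2'/M_2^{3/2}$ have opposite signs), so that only the $b'$ contribution, namely $-iS_1\xi/\sqrt{M_2}$, survives in $R$.
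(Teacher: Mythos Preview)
Your proposal is correct and follows essentially the same route as the paper: both derive the equation for $\widehat{g}$ from Proposition~\ref{prop-weak-formulation2-f} and~\eqref{dtrho}, use the identity $\widehat{\gamma}(t,\xi)=e^{i\xi\overline{x}/\sqrt{M_2}}\widehat{g}(t,\xi/\sqrt{M_2})$, apply the chain rule with~\eqref{dtxbar} and~\eqref{dtM2}, and rewrite the $\widehat{mg}$ contribution via~$\varphi$. The only cosmetic difference is in bookkeeping: the paper keeps the combination $i\overline{x}\widehat{\gamma}+e^{i\xi\overline{x}/\sqrt{M_2}}\partial_\xi\widehat{g}$ together and identifies it as $\sqrt{M_2}\,\partial_\xi\widehat{\gamma}$ at the end, whereas you track the cancellation of the two $i\xi\overline{x}\,M_2'/M_2^{3/2}$ cross-terms explicitly; the two computations are algebraically equivalent.
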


\begin{proof}
From Assumption~\ref{assumption-growth} and Proposition~\ref{prop-weak-formulation2-f}, and assuming that~$f^0$ has 
finite moments up to order 4,~$\widehat{f}$ is differentiable in time, and using~\eqref{weak-form-B0}, we obtain 
\begin{equation*}
  \partial_t \widehat{f} (t,\xi)= \frac{\left( \widehat{f}\left(t, \xi / 2\right)\right)^2 }{\widehat{f}(t,0)}
  - \widehat{mf} (t,\xi).
\end{equation*}
Thus, we obtain the equation on~$\widehat{g}$, using~\eqref{dtrho}:
\begin{equation}\label{eq-fourier-g}
  \partial_t \widehat{g}(t, \xi) = \big( \widehat{g}( t,\xi / 2)\big)^2 - \widehat{g}(t,\xi)
  - \big(1-\widehat{mg}(t,0)\big)\widehat{g}(t,\xi).
\end{equation}
Then,~$\widehat{\gamma}$ is well defined and we have, for all~$\xi \in \R$,
\begin{equation*}
  \widehat{\gamma}(t,\xi) = e^{i\frac{\xi}{\sqrt{M_2(t)}}\overline{x} (t)}\widehat{g}\left(t,\frac{\xi}{\sqrt{M_2(t)}}\right).
\end{equation*}
Thus, from Proposition~\ref{prop-dtMk} and~\eqref{eq-fourier-g},~$\widehat{\gamma}$ is differentiable in time and we can compute 
\begin{align*}
  \partial_t \widehat{\gamma} (t,\xi) =  &i\frac{\xi}{\sqrt{M_2}}\frac{\d \overline{x}}{\d t} \widehat{\gamma}(t,\xi) - i\frac{\xi \overline{x}}{2M_2^{3/2}}\frac{\d M_2}{\d t} \widehat{\gamma}(t, \xi) + e^{i\frac{\xi}{\sqrt{M_2(t)}}\overline{x} (t)}\partial_t \widehat{g}(t,\xi/\sqrt{M_2})\\
  & - e^{i\frac{\xi}{\sqrt{M_2(t)}}\overline{x}}\frac{1}{2M_2^{3/2}}\frac{\d M_2}{\d t} \xi \partial_\xi \widehat{g}(t,\xi/\sqrt{M_2}),
\end{align*}
with~$\partial_\xi \widehat{g}$ well defined since~$g$ has moment of order one.
Using~\eqref{dtxbar} and~\eqref{eq-fourier-g}, we get 
\begin{align*}
  \partial_t \widehat{\gamma} (t,\xi) = &\left( \widehat{\gamma} (t,\xi/2)\right)^2 - \widehat{\gamma}(t,\xi) -e^{i\frac{\xi}{\sqrt{M_2(t)}}\overline{x} (t)}\widehat{mg}(t,\tfrac{\xi}{\sqrt{M_2}}) + \widehat{mg}(t,0) \widehat{\gamma}(t,\xi) \\
  &- iS_1(t)\frac{\xi}{\sqrt{M_2}} \widehat{\gamma}(t,\xi) - \frac{1}{2} \frac{\d M_2}{\d t}\frac{\xi }{M_2^{3/2}}\left(i\overline{x}\widehat{\gamma}(t,\xi) + e^{i\frac{\xi}{\sqrt{M_2(t)}}\overline{x} (t)}\partial_\xi \widehat{g}(t,\tfrac{\xi}{\sqrt{M_2}})\right).
\end{align*}
And using~\eqref{dtM2}, we then compute 
\begin{align*}
  \partial_t \widehat{\gamma} (t,\xi) = &\left( \widehat{\gamma} (t,\xi/2)\right)^2 - \widehat{\gamma}(t,\xi) -e^{i\frac{\xi}{\sqrt{M_2(t)}}\overline{x} (t)}\widehat{mg}(t,\tfrac{\xi}{\sqrt{M_2}}) + \widehat{mg}(t,0) \widehat{\gamma}(t,\xi) 
  - iS_1(t)\frac{\xi}{\sqrt{M_2}} \widehat{\gamma}(t,\xi) \\
  &+ \frac{\xi}{2}\left(\frac{1}{2}\frac{1 }{\sqrt{M_2}} + \frac{S_2 }{M_2^{3/2}} - \frac{S_0}{\sqrt{M_2}} \right) 
  \left(i\overline{x}\widehat{\gamma}(t,\xi) + e^{i\frac{\xi}{\sqrt{M_2(t)}}\overline{x} (t)}\partial_\xi \widehat{g}(t,\tfrac{\xi}{\sqrt{M_2}})\right).
\end{align*}
Then, noticing the expressions  
\begin{align*}
  &\partial_\xi \widehat{\gamma}(t,\xi) =\frac{1 }{\sqrt{M_2}} \left(i\overline{x}\widehat{\gamma}(t,\xi) + e^{i\frac{\xi}{\sqrt{M_2(t)}}\overline{x} (t)}\partial_\xi \widehat{g}(t,\tfrac{\xi}{\sqrt{M_2}})\right), \\
  &\widehat{mg}(t,0) = S_0(t) - m\left(\overline{x}(t)\right),
\end{align*}
the definition~\eqref{eq-def-eps} enables to write 
\begin{equation*}
  e^{i\frac{\xi}{\sqrt{M_2(t)}}\overline{x} (t)}\widehat{mg}(t,\tfrac{\xi}{\sqrt{M_2}}) - \widehat{mg}(t,0) \widehat{\gamma}(t,\xi) =
  \widehat{\varphi}(t, \xi) - S_0(t)\widehat{\gamma}(t,\xi),
\end{equation*}
and we obtain the equation on~$\widehat{\gamma}$
\begin{align*}
  \partial_t \widehat{\gamma} (t,\xi) = &\left( \widehat{\gamma} (t,\xi/2)\right)^2 - \widehat{\gamma}(t,\xi) + \frac{1}{4} \xi \partial_\xi \widehat{\gamma}(t,\xi)
  - iS_1(t)\frac{\xi}{\sqrt{M_2}} \widehat{\gamma}(t,\xi) - \widehat{\varphi}(t, \xi) \\
  &+ S_0(t)\widehat{\gamma}(t,\xi) + \frac{1}{2}\left(\frac{S_2 }{M_2} - S_0 \right) \xi \partial_\xi \widehat{\gamma}(t,\xi).
\end{align*}
Finally, with the definition of~$R$~\eqref{eq-defR}, we get~\eqref{evolutionGammahat}.
\end{proof}

\section{Long-time behaviour of normalized moments}\label{section-moments}
\subsection{Local stability of some Dirac masses}
We begin this section by proving the local stability result of Theorem~\ref{thm-stability-dirac}, which states that a sufficient condition for the 
convergence of~$g$ towards a Dirac mass is to be initially close enough
to the Dirac mass located at~$\overline{x}_0=\overline{x}(0)$, when~$\eta(\overline{x}_0)>0$, i.e.
when the initial selection rate at the center of mass satisfies~$m(\overline{x}_0)<\inf_{\mathbb{R}}m + \frac12$.

  Before that, we state an useful technical lemma

\begin{lemma}
  \label{lemma-assumption-m}
  We suppose that Assumption~\ref{assumption-local-lipschitz} is satisfied (that is to say $m$ is locally Lipschitz, bounded below and growing at most quadratically). Then, there exists a nonnegative function~$\alpha$, which is locally bounded on~$\mathbb{R}$, and a constant~$\beta\geqslant0$ such that for all~$x,y\in\mathbb{R}$, we have
  \begin{equation}\label{eq-assumption-m}
    |m(x)-m(y)|\leqslant\alpha(y)|x-y|+\beta|x-y|^2.
  \end{equation}
\end{lemma}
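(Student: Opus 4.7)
The strategy is to prove the inequality separately in two regimes depending on the magnitude of $|x-y|$, then combine the two estimates into a single bound with $\alpha$ and $\beta$.

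First, I would fix $y \in \mathbb{R}$ and distinguish two cases. In the regime $|x-y| \leqslant 1$, both $x$ and $y$ lie in the compact interval $[y-1,y+1]$, so the local Lipschitz property of $m$ gives a Lipschitz constant $L(y)$ valid on this interval, namely
\[L(y) := \sup\Big\{\frac{|m(x_1)-m(x_2)|}{|x_1-x_2|} : x_1, x_2 \in [y-1,y+1],\, x_1 \neq x_2\Big\},\]
which is finite by local Lipschitzness. This immediately yields $|m(x)-m(y)| \leqslant L(y)|x-y|$ in this regime. I would then verify that $y \mapsto L(y)$ is locally bounded: if $y$ varies in $[a,b]$, then $[y-1,y+1] \subset [a-1,b+1]$, so $L(y)$ is bounded by the global Lipschitz constant of $m$ on $[a-1,b+1]$, which is finite.

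In the complementary regime $|x-y| \geqslant 1$, I would use Assumption~\ref{assumption-growth} to bound $|m(x)| \leqslant (K+C) + Cx^2$ and similarly for $m(y)$. Then by the triangle inequality and the elementary bound $x^2 \leqslant 2(x-y)^2 + 2y^2$, I obtain
\[|m(x)-m(y)| \leqslant 2(K+C) + 3Cy^2 + 2C(x-y)^2.\]
Since $|x-y| \geqslant 1$, the constant and $y$-dependent terms are dominated by themselves times $|x-y|$, giving
\[|m(x)-m(y)| \leqslant \bigl[2(K+C) + 3Cy^2\bigr]|x-y| + 2C|x-y|^2.\]

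Combining both regimes, I would set
\[\alpha(y) := \max\bigl(L(y),\, 2(K+C) + 3Cy^2\bigr), \qquad \beta := 2C,\]
so that \eqref{eq-assumption-m} holds in all cases. The function $\alpha$ is locally bounded because both $L(y)$ and $y \mapsto y^2$ are locally bounded. The only subtle point is confirming that $L(y)$ is itself locally bounded on $\mathbb{R}$; this is the closest thing to a real obstacle, but the argument above (embedding local neighborhoods inside a larger compact set) handles it straightforwardly.
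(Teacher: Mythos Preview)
Your proof is correct and follows the same overall strategy as the paper's — a case split with the near case handled by local Lipschitz continuity and the far case by the quadratic growth bound — but the splitting criterion is different. You split on~$|x-y|\leqslant1$ versus~$|x-y|\geqslant1$, which is perhaps the most natural choice; the paper instead splits on~$|x|\leqslant2|y|+1$ versus~$|x|>2|y|+1$. In the paper's far case the authors show~$|x-y|\geqslant\max(1,|y|,\tfrac{|x|}2)$, so that~$|m(x)|+|m(y)|\leqslant\widetilde{C}(2+x^2+y^2)$ is bounded purely by~$7\widetilde{C}|x-y|^2$, with no linear term surviving. Consequently their~$\alpha(y)$ is just the Lipschitz constant~$L_{2|y|+1}$ of~$m$ on~$[-(2|y|+1),2|y|+1]$, whereas your~$\alpha(y)$ carries an extra polynomial term~$2(K+C)+3Cy^2$. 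Both choices give a locally bounded~$\alpha$ and a constant~$\beta$, so for the purposes of the later arguments (where only local boundedness of~$\alpha$ is used) the two versions are interchangeable.
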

\begin{proof}
  For~$r>0$, let us denote~$L_r=\sup_{|x|,|y|\leqslant r, x\neq y}\frac{|m(x)-m(y)|}{|x-y|}$ the Lipschitz constant of~$m$ on~$[-r,r]$. We suppose that~$|m(x)|\leqslant \widetilde{C}(1+x^2)$ for all $x$ in~$\mathbb{R}$.

  Let us now pick $x$ and~$y$ in~$\mathbb{R}$. If $|x|\leqslant2|y|+1$, then
  \[|m(x)-m(y)|\leqslant L_{2|y|+1} |x-y|.\]
  Otherwise, we have $|x|\geqslant2|y|+1$ and therefore~$|x-y|\geqslant|x|-|y|\geqslant1+|y|\geqslant\max(1,|y|)$. Using~$|y|\leqslant\frac{|x|-1}{2}$ we also get~$|x-y|\geqslant|x|-|y|\geqslant|x|-\frac{|x|-1}{2}\geqslant \frac{|x|}{2},$
  so finally we obtain
  \[|m(x)-m(y)|\leqslant |m(x)|+|m(y)|\leqslant\widetilde{C}(2+x^2+y^2)\leqslant\widetilde{C}(2|x-y|^2+4|x-y|^2+|x-y|^2)\leqslant 7\widetilde{C}|x-y|^2.\]
  In both cases, we obtain~\eqref{eq-assumption-m} with~$\alpha(y)=L_{2|y|+1}$ and~$\beta=7\widetilde{C}$.
  
  Notice that conversely, if $m$ (bounded below) satisfies the estimate~\eqref{eq-assumption-m}, then Assumption~\ref{assumption-local-lipschitz} is true.
\end{proof}

We are now ready to prove Theorem~\ref{thm-stability-dirac}.
\begin{proof}[Proof of Theorem~\ref{thm-stability-dirac}]
  Thanks to the definitions~\eqref{def-Mk},~\eqref{def-Sk} and~\eqref{def-eta}, we obtain
  \begin{equation}
    S_2\geqslant\big(\eta(\overline{x})-\frac12\big)M_2,\quad S_4\geqslant\big(\eta(\overline{x})-\frac12\big)M_4.\label{eq-S2M2S4M4}
  \end{equation}
  By Lemma~\ref{lemma-assumption-m}, using Cauchy-Schwarz inequalities in the definitions~\eqref{def-Mk}-\eqref{def-Sk}, we obtain

  \begin{align}
    M_2&\leqslant\sqrt{M_4},\label{eq-M2M4}\\
    |M_3|&\leqslant\sqrt{M_2M_4},\label{eq-M3sqM2M4}\\
    |S_0|&\leqslant\alpha(\overline{x})\sqrt{M_2}+\beta M_2,\label{eq-S0M2}\\
    |S_1|&\leqslant\alpha(\overline{x})M_2+\beta\sqrt{M_2M_4}\label{eq-S1M2M4}.
  \end{align}
  
  Therefore we obtain, thanks to~\eqref{dtxbar},~\eqref{dtM2}, and~\eqref{dtM4}:
  \begin{align}
    \label{estim_xbar} \Big|\frac{\d\overline{x}}{\d t}\Big|&\leqslant\alpha(\overline{x})M_2+\beta\sqrt{M_2M_4},\\
    \label{estim_dtM2} \frac{\d}{\d t} M_2&\leqslant(-\eta(\overline{x})+\alpha(\overline{x})\sqrt{M_2}+\beta M_2)M_2,\\
    \label{estim_dtM4} \frac{\d^+}{\d t} M_4&\leqslant\big(-\eta(\overline{x})+5(\alpha(\overline{x})\sqrt{M_2}+\beta M_2)\big)M_4,
  \end{align}
  where we have used~\eqref{eq-M2M4} and~\eqref{eq-M3sqM2M4} to get that~$|M_2M_3|\leqslant\sqrt{M_4}\sqrt{M_2M_4}=\sqrt{M_2}M_4$.

  We now fix~$\delta_0$ such that~$0<\delta<\delta_0<\eta(\overline{x}_0)$, and by 
  continuity of~$m$ we find~$r>0$ such that~$\eta(x)\geqslant\delta_0$ for 
  all~$x\in[\overline{x}_0-r,\overline{x}_0+r]$. 
  We set~$\widetilde{\alpha}=\sup_{[\overline{x}_0-r,\overline{x}_0+r]}\alpha$, 
  which is finite by local boundedness of~$\alpha$. Using~\eqref{eq-M2M4}, we then fix~$M_4^0$ sufficiently small in order to have
  \begin{gather*}
    5\big(\widetilde{\alpha}\sqrt{M_2^0}+\beta M_2^0\big)<\delta_0-\delta,\\
    \widetilde{\alpha}M_2^0+\beta\sqrt{M_2^0M_4^0}<r\delta.
  \end{gather*}
  Finally we define~$T$ as the supremum of all~$t>0$ such that for all~$s\in[0,t]$, we have
  \begin{equation}\label{conditions-T}
    \begin{cases}
       |\overline{x}(s)-\overline{x}_0|\leqslant r,\\
     5\widetilde{\alpha}\sqrt{M_2(s)}+\beta M_2(s)\leqslant\delta_0-\delta,\\
      \widetilde{\alpha}M_2(s)+\beta\sqrt{M_2(s)M_4(s)}\leqslant r\delta,
    \end{cases}
  \end{equation}
  and the set of such~$t>0$ is non empty, therefore~$T>0$ (we may have~$T=+\infty$).

  For all~$t\in[0,T)$, we get from~\eqref{estim_dtM2}-\eqref{estim_dtM4} that~$\frac{\d}{\d t} M_2\leqslant-\delta M_2$ and~$\frac{\d^+}{\d t} M_4\leqslant-\delta M_4$, giving that~$M_2$ and~$M_4$ are nonincreasing in time and satisfying estimates~\eqref{M2-decay-delta2}-\eqref{M4-decay-delta2}, thanks to Lemma~\ref{lemma-dplus}. Thanks to~\eqref{estim_xbar}, we obtain
  \begin{align*}
    |\overline{x}(t)-\overline{x}_0|&\leqslant\int_0^t(\widetilde{\alpha}M_2^0+\beta\sqrt{M_2^0M_4^0})e^{-\delta s}\d s\\
                                    &\leqslant(\widetilde{\alpha}M_2^0+\beta\sqrt{M_2^0M_4^0})\frac{1-e^{-\delta t}}{\delta}<(1-e^{-\delta t})r.
  \end{align*}
Let us prove that~$T=+\infty$ by contradiction. If~$T$ is finite, then~$|\overline{x}(T)-\overline{x}_0|\leqslant(1-e^{-\delta T})r<r$, and by monotonicity of~$M_2$ and~$M_4$ we also get
  \begin{align*}
    & 5\widetilde{\alpha}\sqrt{M_2(T)}+\beta M_2(T)\leqslant5\widetilde{\alpha}\sqrt{M_2^0}+\beta M_2^0<\delta_0-\delta,\\
    & \widetilde{\alpha}M_2(T)+\beta\sqrt{M_2(T)M_4(T)}\leqslant\widetilde{\alpha}M_2^0+\beta\sqrt{M_2^0M_4^0}<r\delta.
  \end{align*}

  Therefore, by continuity in time of~$M_2$,~$M_4$ and~$\overline{x}$, there exists~$\tau>0$ such that for all~$s$ in~$[T-\tau,T+\tau]$, we have the estimations~\eqref{conditions-T}. Thanks to the definition of~$T$, these estimations are true for all~$s\in[0,T)$, and therefore also for all~$s\in[0,T+\tau]$, in contradiction with the definition of~$T$ as a supremum. 

  Since the derivative of~$\overline{x}$ is decaying exponentially fast, we get that~$\overline{x}(t)$ converges to some~$\overline{x}_\infty$ in~$\mathbb{R}$, and more precisely we have
  \begin{equation*}
    |\overline{x}(t)-\overline{x}_\infty|\leqslant\int_t^\infty(\widetilde{\alpha}M_2^0+\beta\sqrt{M_2^0M_4^0})e^{-\delta s}\d s=\frac{\widetilde{\alpha}M_2^0+\beta\sqrt{M_2^0M_4^0}}{\delta}e^{-\delta t}.
  \end{equation*}
  This also shows that the smaller the value of~$M_4^0$, the closer the location of~$\overline{x}_\infty$ relatively to~$\overline{x}_0$.

  Finally, the convergence in Wasserstein distance comes from the fact that~$W_4(g, \delta_{\overline{x}})=M_4(t)^{\frac14}$, and therefore
  \[W_4(g, \delta_{\overline{x}_\infty})\leqslant(M_4^0)^{\frac14}e^{-\frac{\delta}{4}t}+W_4(\delta_{\overline{x}(t)}, \delta_{\overline{x}_\infty})=(M_4^0)^{\frac14}e^{-\frac{\delta}{4}t}+|\overline{x}(t)-\overline{x}_\infty|.\qedhere \]
\end{proof}

\begin{remark} If~$m$ is Lipschitz (we can take $\beta=0$ and~$\alpha$ constant in Lemma~\ref{lemma-assumption-m}), then we only need to suppose~$M_2^0$ sufficiently small to get the estimates~\eqref{eta-xbar-delta0},~\eqref{M2-decay-delta2} and the exponential convergence of~$\overline{x}$. The proof is exactly the same, only using the estimate on the evolution of~$M_2$ given by~\eqref{subsolMk} (see Remark~\ref{remark-subsol-m-Lispschitz}), and the convergence is in~$2$-Wasserstein distance.
\end{remark}

\subsection{Improved estimates on even order moments}
We can now use this result of stability of Theorem~\ref{thm-stability-dirac} together with the equation for higher order moments to improve the bound on the rate of convergence. In this subsection, we start from the assumptions made in Theorem~\ref{thm-stability-dirac}:

\begin{assumption}\label{assumption-improvements} We suppose that 
  the selection function~$m$ satisfies Assumption~\ref{assumption-local-lipschitz}, 
  and we take~$\overline{x}_0$ and $\delta$ such that~$0<\delta<\eta(\overline{x}_0)$. 
  Finally, we suppose that the initial profile~$g_0$ is such 
  that~$\overline{x}(0)=\overline{x}_0$ and that~$M_4^0$ is sufficiently 
  small, ensuring that the conclusion of Theorem~\ref{thm-stability-dirac} is true. As in the proof of this theorem, we denote by~$\widetilde{\alpha}$ a uniform bound in time for~$\alpha(\overline{x}(t))$ (independent of~$M_4^0$, only depending on~$\eta$,~$\delta$ and~$\overline{x}_0$).
\end{assumption}

To obtain the estimates on~$M_{2k}$, the idea is to use the equations~\eqref{dtMk} and~\eqref{subsolMk}, along with the stability result
stated in Theorem~\ref{thm-stability-dirac}. 
Apart from the exponential decay rates of the moments, we will also need a control on the constants in front of these estimates. 
Furthermore we will often have to deal with quantities controlled by~$e^{-\widetilde{\lambda}t}$ for all~$\widetilde{\lambda}$ strictly less than some~$\lambda>0$ to be identified.
Therefore we introduce the following definition.
\begin{definition}\label{def-decay-controlled}
  For~$\lambda>0$, we say that a nonnegative quantity~$y(t)$ (which may depend on several parameters) has a decay controlled by the parameter~$\sigma\geqslant0$ and with rate~$[\lambda]^-$ if for all~$\widetilde{\lambda}<\lambda$, there exists a constant~$C(\sigma)>0$ converging to~$0$ when~$\sigma\to0$, and such that for all~$t>0$, we have~$y(t)\leqslant C(\sigma)e^{-\widetilde{\lambda}t}$.
\end{definition}

We will use several times the following lemma.
\begin{lemma}\label{lemma-ineq-diff}
  If~$y$ is a nonnegative continuous function in time satisfying for all~$t>0$ 
  \begin{equation*}
    \frac{\d^+}{\d t}y(t)\leqslant(-\lambda+v_1(t))y(t)+v_0(t),
  \end{equation*}
  where~$v_ 0(t)$ and~$v_1(t)$ have a decay controlled by the parameter~$\sigma$ and with respective rates~$[\omega_0]^-$ and~$[\omega_1]^-$, then~$y(t)$ has a decay controlled by~$\max(y(0),\sigma)$ and with rate~$[\min(\lambda,\omega_0)]^-$.
\end{lemma}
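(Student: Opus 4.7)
The plan is to apply the Grönwall-type estimate from Lemma~\ref{lemma-dplus}, with~$a(t)=-\lambda+v_1(t)$ and~$b(t)=v_0(t)$, yielding
\[y(t)\leqslant y(0)\,e^{-\lambda t}\exp\!\Big(\int_0^tv_1(s)\,\d s\Big)+e^{-\lambda t}\int_0^tv_0(u)\,e^{\lambda u}\exp\!\Big(\int_u^tv_1(s)\,\d s\Big)\d u.\]
The first step is to get a uniform-in-time control of the factor~$\exp(\int_0^tv_1(s)\,\d s)$. Fix any~$\widetilde{\omega}_1<\omega_1$ with~$\widetilde{\omega}_1>0$; by hypothesis, there is~$C_1(\sigma)>0$ (converging to~$0$ as~$\sigma\to0$) such that~$v_1(s)\leqslant C_1(\sigma)e^{-\widetilde{\omega}_1 s}$. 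Integrating yields~$\int_0^tv_1(s)\,\d s\leqslant C_1(\sigma)/\widetilde{\omega}_1$, so there is a constant~$K(\sigma)$, bounded as~$\sigma\to 0$ (tending to~$1$), with~$\exp(\int_u^tv_1(s)\,\d s)\leqslant K(\sigma)$ for all~$0\leqslant u\leqslant t$.

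Next, fix any target rate~$\widetilde{\lambda}<\min(\lambda,\omega_0)$. The first term is already bounded by~$y(0)K(\sigma)e^{-\lambda t}\leqslant y(0)K(\sigma)e^{-\widetilde{\lambda} t}$, and since~$y(0)\leqslant\max(y(0),\sigma)$, this contribution vanishes as~$\max(y(0),\sigma)\to0$. For the second term, pick an intermediate~$\widetilde{\omega}_0$ with~$\widetilde{\lambda}<\widetilde{\omega}_0<\omega_0$ and~$\widetilde{\omega}_0\neq\lambda$, and the associated constant~$C_0(\sigma)\to0$ with~$v_0(u)\leqslant C_0(\sigma)e^{-\widetilde{\omega}_0 u}$. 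Then
\[e^{-\lambda t}\!\int_0^tv_0(u)e^{\lambda u}\d u\leqslant C_0(\sigma)\,e^{-\lambda t}\!\int_0^te^{(\lambda-\widetilde{\omega}_0)u}\d u.\]
If~$\widetilde{\omega}_0>\lambda$, this is bounded by~$\frac{C_0(\sigma)}{\widetilde{\omega}_0-\lambda}e^{-\lambda t}\leqslant\frac{C_0(\sigma)}{\widetilde{\omega}_0-\lambda}e^{-\widetilde{\lambda}t}$; if~$\widetilde{\omega}_0<\lambda$, it is bounded by~$\frac{C_0(\sigma)}{\lambda-\widetilde{\omega}_0}e^{-\widetilde{\omega}_0 t}\leqslant\frac{C_0(\sigma)}{\lambda-\widetilde{\omega}_0}e^{-\widetilde{\lambda}t}$, using~$\widetilde{\omega}_0>\widetilde{\lambda}$. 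Multiplying by~$K(\sigma)$ does not affect the conclusion.

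Adding the two contributions gives~$y(t)\leqslant C(\max(y(0),\sigma))e^{-\widetilde{\lambda}t}$, where~$C(\cdot)$ is a constant depending on~$\widetilde{\lambda}$, the fixed choices of~$\widetilde{\omega}_0$,~$\widetilde{\omega}_1$, and involving~$K(\sigma)$,~$C_0(\sigma)$, as well as a linear factor in~$y(0)$. Since~$C_0(\sigma)\to0$ and~$y(0)\to0$ as~$\max(y(0),\sigma)\to0$ (while~$K(\sigma)$ stays bounded), the constant tends to~$0$, exactly matching Definition~\ref{def-decay-controlled}.

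The only subtlety (and the mildly delicate step) is the necessity to separate whether the forcing rate~$\omega_0$ is larger or smaller than the decay rate~$\lambda$ of the linear part, which is why one loses the sharp rate~$\min(\lambda,\omega_0)$ and only recovers it in the form~$[\min(\lambda,\omega_0)]^-$; choosing the auxiliary~$\widetilde{\omega}_0$ strictly between~$\widetilde{\lambda}$ and~$\omega_0$ (and away from~$\lambda$) bypasses the marginal logarithmic~$te^{-\lambda t}$ case cleanly.
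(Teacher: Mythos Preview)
Your proof is correct and follows essentially the same Grönwall-based approach as the paper. The only minor difference is organisational: the paper first relaxes~$-\lambda$ to~$-\widetilde{\lambda}$ in the differential inequality (since~$-\lambda\leqslant-\widetilde{\lambda}$) before integrating, which guarantees~$\widetilde{\omega}_0>\widetilde{\lambda}$ automatically and avoids your case split on whether~$\widetilde{\omega}_0$ lies above or below~$\lambda$; your route is slightly longer but equally valid.
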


\begin{proof}
  If~$\widetilde{\lambda}<\min(\lambda,\omega_0)$, we fix~$\widetilde{\omega}_0$ and~$\widetilde{\omega}_1$ such that~$\widetilde{\lambda}<\widetilde{\omega}_0<\omega_0$ and~$0<\widetilde{\omega}_1<\omega_1$, and we have for all~$t>0$
  \[\frac{\d^+}{\d t}y(t)\leqslant(-\widetilde{\lambda}+C_1(\sigma)e^{-\widetilde{\omega}_1t})y(t)+C_2(\sigma)e^{-\widetilde{\omega}_0t},\]
  with~$C_1(\sigma)$ and~$C_2(\sigma)$ converging to~$0$ as~$\sigma\to0$.
  Therefore, thanks to Lemma~\ref{lemma-dplus}, we obtain
  
  \begin{align*}
    y(t)e^{\widetilde{\lambda}t+\frac{C_1(\sigma)}{\widetilde{\omega}_1}(e^{-\widetilde{\omega}_1t}-1)}&\leqslant y(0)+C_2(\sigma)\int_0^te^{\widetilde{\lambda}s+\frac{C_1(\sigma)}{\widetilde{\omega}_1}(e^{-\widetilde{\omega}_1s}-1)}e^{-\widetilde{\omega}_0s}\d s\\
                                                                              &\leqslant y(0)+C_2(\sigma)\int_0^te^{\widetilde{\lambda}s-\widetilde{\omega}_0s}\d s=y(0)+\frac{C_2(\sigma)}{\widetilde{\omega}_0-\widetilde{\lambda}}(1-e^{-(\widetilde{\omega}_0-\widetilde{\lambda})t})\\
                                                                              &\leqslant y(0)+\frac{C_2(\sigma)}{\widetilde{\omega}_0-\widetilde{\lambda}}.
  \end{align*}
Finally we get
  \[ y(t)\leqslant\Big(y(0)+\frac{C_2(\sigma)}{\widetilde{\omega}_0-\widetilde{\lambda}}\Big)e^{\frac{C_1(\sigma)}{\widetilde{\omega}_1}(1-e^{-\widetilde{\omega}_1t})-\widetilde{\lambda}t}\leqslant\Big(y(0)+\frac{C_2(\sigma)}{\widetilde{\omega}_0-\widetilde{\lambda}}\Big)e^{\frac{C_1(\sigma)}{\widetilde{\omega}_1}}\cdot e^{-\widetilde{\lambda}t},\]
  and this ends the proof, since the constant converges to~$0$ as~$y(0)$ and~$\sigma$ converge to~$0$.
\end{proof}

First, we obtain the next proposition that presents links between rates of convergence of~$M_2$ and~$M_4$, allowing to improve the rates provided by Theorem~\ref{thm-stability-dirac}.
\begin{proposition}\label{prop-M2M4-firstimprovement}
  Under Assumption~\ref{assumption-improvements} we have that~$M_2$ and~$M_4$ have decay controlled by~$M_4^0$ and with respective rates~$[\min(\frac12,\frac38+\delta)]^-$ and~$[\frac38+\delta]^-$.
\end{proposition}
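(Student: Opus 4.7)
The plan is a bootstrap via Lemma~\ref{lemma-ineq-diff}, alternately applied to the differential (in)equations~\eqref{dtM2} and~\eqref{dtM4}. Theorem~\ref{thm-stability-dirac} provides the base case: both $M_2$ and $M_4$ have decay controlled by $M_4^0$ at rate $[\delta]^-$ (using $M_2 \leqslant \sqrt{M_4}$ from~\eqref{eq-M2M4}).

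Suppose at some stage of the bootstrap that $M_2$ has rate $[a]^-$ and $M_4$ has rate $[b]^-$, both controlled by $M_4^0$, with $a \leqslant b$. From Lemma~\ref{lemma-assumption-m} I deduce $|S_2| \leqslant \widetilde{\alpha}\sqrt{M_2 M_4} + \beta M_4$, and I rewrite~\eqref{dtM2} as
\[
\frac{\d}{\d t} M_2 \leqslant \bigl(-\tfrac12 + |S_0|\bigr) M_2 + |S_2|,
\]
with $v_1 = |S_0|$ of rate $[a/2]^-$ (by~\eqref{eq-S0M2}) and $v_0 = |S_2|$ of rate $[(a+b)/2]^-$ (the bottleneck being the $\sqrt{M_2 M_4}$ term, since $a\leqslant b$). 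Lemma~\ref{lemma-ineq-diff} then upgrades the rate of $M_2$ to $[\min(\tfrac12, \tfrac{a+b}{2})]^-$. Similarly,~\eqref{dtM4} reads
\[
\frac{\d^+}{\d t} M_4 \leqslant \bigl(-(\tfrac38+\delta) + |S_0|\bigr) M_4 + \tfrac38 M_2^2 + 4|S_1|\,|M_3|,
\]
and using~\eqref{eq-S1M2M4},~\eqref{eq-M3sqM2M4} one checks that $\tfrac38 M_2^2 + 4|S_1|\,|M_3|$ has rate $[\min(2a, \tfrac{3a+b}{2}, a+b)]^- = [2a]^-$ (again since $a \leqslant b$). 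Lemma~\ref{lemma-ineq-diff} then upgrades the rate of $M_4$ to $[\min(\tfrac38+\delta, 2a)]^-$.

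Starting from $(a_0, b_0) = (\delta, \delta)$ and iterating $a_{n+1} = \min(\tfrac12, \tfrac{a_n+b_n}{2})$, $b_{n+1} = \min(\tfrac38+\delta, 2a_{n+1})$ yields nondecreasing sequences, with $a_n \leqslant b_n$ preserved by induction, converging to the unique fixed point $(a^*, b^*) = (\min(\tfrac12, \tfrac38+\delta), \tfrac38+\delta)$ (a short case analysis shows that any other fixed point forces $a^* = b^* = 0$). Given any $\widetilde{\lambda}_2 < a^*$ and $\widetilde{\lambda}_4 < b^*$, finitely many iterations suffice to exceed them.

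The main bookkeeping issue is tracking controlling constants: each application of Lemma~\ref{lemma-ineq-diff} composes a new $C(\sigma)$ (vanishing at $\sigma=0$) with the previous ones. Since the number of iterations $N$ required to reach $(\widetilde{\lambda}_2, \widetilde{\lambda}_4)$ depends only on $\delta$ and on the chosen $\widetilde{\lambda}_i$ (not on $M_4^0$), a finite composition of such vanishing-at-zero functions still vanishes, so the final decay estimates remain controlled by $M_4^0$, yielding the claim.
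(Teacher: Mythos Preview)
Your proposal is correct and follows the same bootstrap strategy as the paper, alternately feeding the differential (in)equalities for $M_2$ and $M_4$ into Lemma~\ref{lemma-ineq-diff}. The only substantive difference is in how the cross term is handled: the paper applies Young's inequality to $\widetilde{\alpha}\sqrt{M_2M_4}$ so that the source term in the $M_2$ inequality becomes a multiple of $M_4$ alone, and it absorbs $4S_1M_3$ into the linear part of the $M_4$ inequality; this yields the cleaner one-step map $(\lambda_2,\lambda_4)\mapsto(\min(\tfrac12,\lambda_4),\min(\tfrac38+\delta,2\lambda_2))$, whose double iterate simply doubles the rates, so the induction reads $(\min(\tfrac12,\tfrac38+\delta,2^k\delta),\min(\tfrac38+\delta,2^k\delta))$ and terminates transparently. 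Your map $(a,b)\mapsto(\min(\tfrac12,\tfrac{a+b}{2}),\min(\tfrac38+\delta,2a'))$ is a bit slower per step and requires the fixed-point/monotonicity argument you sketch, but it reaches the same limits and avoids the auxiliary $\varepsilon$; both routes are equally valid.
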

\begin{proof} We first suppose that~$M_2$ and~$M_4$ have decay controlled by~$M_4^0$ and with respective rates~$[\lambda_2]^-$ and~$[\lambda_4]^-$, for~$\lambda_2,\lambda_4>0$, and show that these rates can be improved. Thanks to Lemma~\ref{lemma-assumption-m}, using once again Cauchy-Schwarz inequality in the definition~\eqref{def-Sk}, we obtain
  \begin{equation}
    |S_2|\leqslant\widetilde{\alpha}\sqrt{M_2M_4}+\beta M_4\label{eq-S2M2M4}.
  \end{equation}
  Therefore we obtain, thanks to~\eqref{dtM2} and the estimate~\eqref{eq-S0M2},
  \begin{align*}
    \frac{\d}{\d t} M_2&\leqslant\left(-\frac12+\widetilde{\alpha}\sqrt{M_2}+\beta M_2\right)M_2+\widetilde{\alpha}\sqrt{M_2M_4}+\beta M_4,\\
                       &\leqslant\left(-\frac12+\varepsilon+\widetilde{\alpha}\sqrt{M_2}+\beta M_2\right)M_2+\left(\frac{\widetilde{\alpha}^2}{4\varepsilon}+\beta\right)M_4,
  \end{align*}
  where we used Young’s inequality for some~$\varepsilon>0$. Similarly, using~\eqref{dtM4},~\eqref{eq-S2M2S4M4} and~\eqref{eta-xbar-delta0} we obtain
  \begin{equation*}
    \frac{\d^+}{\d t} M_4\leqslant\left(-\delta-\frac38+5(\widetilde{\alpha}\sqrt{M_2}+\beta M_2)\right)M_4+\frac38M_2^2.
  \end{equation*}
  Therefore, since~$\widetilde{\alpha}\sqrt{M_2}+\beta M_2$,~$M_4$ and~$M_2^2$ have decay controlled by the initial moment~$M_4^0$ and respective rates~$[\frac12\lambda_2]^-$,~$[\lambda_4]^-$ and~$[2\lambda_2]^-$, by applying Lemma~\ref{lemma-ineq-diff},~$M_4$ have decay controlled by~$M_4^0$ with rate~$[\min(\frac38+\delta,2\lambda_2)]^-$, while~$M_2$ has decay controlled by~$\max(M_2^0,M_4^0)$ (and therefore, thanks to~\eqref{eq-M2M4}, by~$M_4^0$ only) and with rate~$[\min(\frac12-\varepsilon,\lambda_4)]^-$. Since this is true for any~$\varepsilon>0$, we obtain that~$M_2$ has rate~$[\min(\frac12,\lambda_4)]^-$. Therefore, by applying once more this property, we obtain that~$M_2$ and~$M_4$ have decay controlled by~$M_4^0$ and with respective rates~$[\min(\frac12,\frac38+\delta,2\lambda_2)]^-$ and~$[\min(\frac38+\delta,2\lambda_4)]^-$. Indeed~$\min(\frac38+\delta,1,2\lambda_4)=\min(\frac38+\delta,2\lambda_4)$ since~$\delta<\eta(\overline{x}_0)\leqslant\frac12$.
  
  Hence, since~$M_2$ and~$M_4$ both have decay controlled by~$M_4^0$ and with rate~$[\delta]^-$, thanks to Theorem~\ref{thm-stability-dirac} and the estimate~\eqref{eq-M2M4}, we directly obtain by induction that for all~$k\in\mathbb{N}$,~$M_2$ and~$M_4$ have decay controlled by~$M_4^0$ and with respective rates~$[\min(\frac12,\frac38+\delta,2^k\delta)]^-$ and~$[\min(\frac38+\delta,2^{k}\delta)]^-$, so for~$k$ sufficiently large, this ends the proof.
\end{proof}

The next proposition uses these estimates on~$M_2$ and~$M_4$ to obtain estimates for the rates of convergence of all moments of even order, provided they are finite initially.

\begin{proposition}\label{prop-estimates-Mk}
   Under Assumption~\ref{assumption-improvements}, for all~$k\geqslant2$, if~$M_{2k}^0$ is finite, then~$M_{2k}(t)$ has decay controlled by~$M_{2k}^0$ and with rate~$[\frac12-\frac1{2^{2k-1}}+\delta]^-$.
\end{proposition}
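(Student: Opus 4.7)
The plan is by induction on $k$, with base case $k = 2$ provided by Proposition~\ref{prop-M2M4-firstimprovement}. For the inductive step with $k \geq 3$, I assume the claim holds for $2 \leq j < k$; writing $\lambda_{2j} := \frac{1}{2} - \frac{1}{2^{2j-1}} + \delta$, the starting point is~\eqref{subsolMk} applied with $k$ replaced by $2k$. Combined with the lower bound $\eta(\overline{x}(t)) \geq \delta$ from Theorem~\ref{thm-stability-dirac}, it gives
\[
\frac{\d^+}{\d t}M_{2k} \leq -\big(\lambda_{2k} - S_0\big)M_{2k} + \frac{1}{2^{2k}}\sum_{i=2}^{2k-2}\binom{2k}{i}M_{2k-i}M_i + 2k\, S_1 M_{2k-1}.
\]
The goal is to put this in the form required by Lemma~\ref{lemma-ineq-diff}, with $v_1 = S_0$ (whose decay rate is positive by~\eqref{eq-S0M2} and Proposition~\ref{prop-M2M4-firstimprovement}) and a remainder $v_0$ built from the last two terms whose decay rate strictly exceeds $\lambda_{2k}$.

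For the quadratic sum, I would exploit the Lyapunov inequality for the probability measure $g(t,\cdot)$: for every $2 \leq i \leq 2k-2$,
\[
|M_i| \leq M_{2k-2}^{i/(2k-2)}, \qquad \text{hence} \qquad |M_{2k-i}\,M_i| \leq M_{2k-2}^{k/(k-1)}.
\]
By the inductive hypothesis this quantity has decay rate $\frac{k}{k-1}\lambda_{2k-2}$, and a direct computation gives
\[
\tfrac{k}{k-1}\lambda_{2k-2} - \lambda_{2k} = \tfrac{1}{k-1}\Big(\tfrac{1}{2} + \delta - \tfrac{3k+1}{2^{2k-1}}\Big),
\]
which is strictly positive for every $k \geq 3$ (since $\frac{3k+1}{2^{2k-1}} \leq \frac{5}{16}$ at $k = 3$ and is decreasing afterwards). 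The cross term $2k\,S_1 M_{2k-1}$ is the delicate one: I would apply Cauchy--Schwarz, $|M_{2k-1}| \leq \sqrt{M_{2k-2}\,M_{2k}}$, followed by Young's inequality with a parameter $\varepsilon > 0$, to obtain
\[
|2k\,S_1\,M_{2k-1}| \leq \frac{k\,S_1^2\, M_{2k-2}}{\varepsilon} + k\varepsilon\, M_{2k}.
\]
The parasitic term $k\varepsilon M_{2k}$ is absorbed into the linear coefficient of $M_{2k}$, shrinking the effective rate from $\lambda_{2k}$ to $\lambda_{2k} - k\varepsilon$. Using~\eqref{eq-S1M2M4} together with Proposition~\ref{prop-M2M4-firstimprovement} one checks that $|S_1|$ decays at rate $[\lambda_2]^-$, so $S_1^2 M_{2k-2}$ decays at rate at least $2\lambda_2 + \lambda_{2k-2}$, which is strictly greater than $\lambda_{2k}$ for every $k \geq 2$.

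Fixing any $\tilde\lambda < \lambda_{2k}$ and choosing $\varepsilon$ small enough that $\lambda_{2k} - k\varepsilon > \tilde\lambda$, Lemma~\ref{lemma-ineq-diff} then yields $M_{2k}(t) \leq C\,e^{-\tilde\lambda t}$ with a constant $C$ that tends to zero as $M_{2k}^0 \to 0$; the lower-order initial moments appearing in $C$ are themselves controlled by $M_{2k}^0$ via the Lyapunov inequality applied to $f^0$. Since $\tilde\lambda < \lambda_{2k}$ is arbitrary, this closes the inductive step. The main obstacle in the strategy is the odd moment $M_{2k-1}$: unlike the even moments in the quadratic sum, it has no interpolation bound in terms of even moments of order at most $2k-2$ alone, so Cauchy--Schwarz inevitably feeds an $M_{2k}^{1/2}$ factor back into the differential inequality; absorbing it forces the Young parameter $\varepsilon$ to be taken arbitrarily small, which is permitted precisely by the $[\cdot]^-$ relaxation of Definition~\ref{def-decay-controlled}.
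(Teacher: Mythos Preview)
Your induction scheme is sound and closes, but the route differs from the paper's in how the odd-moment cross term is handled. The paper avoids your Young/$\varepsilon$ splitting entirely: instead of $|M_{2k-1}|\leqslant\sqrt{M_{2k-2}M_{2k}}$, it uses the Lyapunov bounds $|M_{2k-1}|\leqslant M_{2k}^{1-1/(2k)}$ and $\sqrt{M_2}\leqslant M_{2k}^{1/(2k)}$ to obtain directly $|S_1 M_{2k-1}|\leqslant(\widetilde\alpha\sqrt{M_2}+\beta\sqrt{M_4})\,M_{2k}$, which is absorbed wholesale into the coefficient $v_1(t)$ of $M_{2k}$ with no loss of rate. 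For the quadratic sum the paper bounds $|M_\ell M_{2k-\ell}|\leqslant M_2 M_{2k-2}$ (H\"older between orders $2$ and $2k-2$) rather than your $M_{2k-2}^{k/(k-1)}$; both bounds work, but the paper's choice exploits the improved rate for $M_2$ from Proposition~\ref{prop-M2M4-firstimprovement}. The paper's treatment is thus cleaner---no artificial $\varepsilon$ is needed---while your version shows explicitly why the $[\cdot]^-$ relaxation of Definition~\ref{def-decay-controlled} is enough to absorb such losses.

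One point needs tightening. With your convention $\lambda_{2j}=\tfrac12-\tfrac1{2^{2j-1}}+\delta$ one has $\lambda_2=\delta$, and the claim that $2\lambda_2+\lambda_{2k-2}>\lambda_{2k}$ then \emph{fails} for small $\delta$ (at $k=3$ it requires $\delta>3/64$). The remedy is immediate and implicit in the references you cite: from~\eqref{eq-S1M2M4} and Proposition~\ref{prop-M2M4-firstimprovement} the actual rate of $|S_1|$ is $[\min(\tfrac12,\tfrac38+\delta)]^-\geqslant[\tfrac38]^-$, so $S_1^2 M_{2k-2}$ has rate at least $[\tfrac34+\lambda_{2k-2}]^-$; since $\lambda_{2k}-\lambda_{2k-2}=3/2^{2k-1}\leqslant 3/32$, the required strict inequality holds for every $k\geqslant3$ and every $\delta>0$. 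You should replace ``rate $[\lambda_2]^-$'' by the rate actually furnished by Proposition~\ref{prop-M2M4-firstimprovement}.
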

\begin{proof}
  We proceed by induction, since, the case~$k=2$ is provided by Proposition~\ref{prop-M2M4-firstimprovement}.  We fix~$k>2$, and suppose the result is true for~$k-1$.

  First of all, if~$2i\leqslant\ell\leqslant2j$, using Hölder inequality in the definition~\eqref{def-Mk}, we get
  \begin{align}
    |M_\ell|&\leqslant\int_{\mathbb{R}}|x-\overline{x}(t)|^\ell g(x)\d x=\int_{\mathbb{R}}|x-\overline{x}(t)|^{2i\frac{2j-\ell}{2j-2i}+2j\frac{\ell-2i}{2j-2i}}g(x)\d x\nonumber\\
         &\leqslant M_{2i}^{\frac{2j-\ell}{2j-2i}}M_{2j}^{\frac{\ell-2i}{2j-2i}}.\label{holder-Mi}
  \end{align}
  In particular, for~$i=0$ and~$j=k$, we obtain
  \begin{equation}
    M_2\leqslant M_{2k}^{\frac1k},\quad M_4\leqslant M_{2k}^{\frac2k},\quad M_{2k-2}\leqslant M_{2k}^{1-\frac1k},\quad \text{and}\quad |M_{2k-1}|\leqslant M_{2k}^{1-\frac1{2k}}.\label{eqs-M2k}
  \end{equation}
Using~\eqref{eq-S1M2M4} to get
  \begin{equation}
    |S_1M_{2k-1}|\leqslant(\widetilde{\alpha}M_2+\beta\sqrt{M_2M_4})M_{2k}^{1-\frac1{2k}}\leqslant(\widetilde{\alpha}\sqrt{M_2}+\beta\sqrt{M_4})M_{2k}.\label{S1M2km1}
  \end{equation}
  As previously, thanks to the definitions~\eqref{def-Mk},~\eqref{def-Sk},~\eqref{def-eta} and the property~\eqref{eta-xbar-delta0}, we obtain
  \begin{equation}
    S_{2k}\geqslant\big(\delta-\frac12\big)M_{2k}.\label{S2k-delta0}
  \end{equation}
  Now, using~\eqref{holder-Mi} with~$i=1$ and~$j=k-1$, we obtain, when~$2\leqslant\ell\leqslant2k-2$:
  \begin{equation}\label{MlM2kml}|M_\ell||M_{2k-\ell}|\leqslant M_2M_{2k-2},
  \end{equation}
  and therefore, we use~\eqref{S2k-delta0},~\eqref{eq-S0M2}, and~\eqref{S1M2km1} in the differential inequality~\eqref{subsolMk} for~$M_{2k}$ to get
  \begin{equation}\label{estimate-dtM2k}
    \frac{\d^+}{\d t}M_{2k}\leqslant\big(-\tfrac12+\tfrac{1}{2^{2k-1}}-\delta+v_1(t)\big)M_{2k}+\Big(\tfrac1{2^{2k}}\sum_{\ell=2}^{2k-2}\tbinom{2k}{\ell}\Big)M_2M_{2k-2},
  \end{equation}
  where~$v_1(t)=(2k+1)\widetilde{\alpha}\sqrt{M_2}+\beta(2k\sqrt{M_4}+M_2)$. Now, using~\eqref{eqs-M2k}, we obtain that when~$M_{2k}^0$ converges to~$0$, then~$M_4^0$ and~$M_2^0$ also converge to~$0$. Therefore by induction, and thanks to Proposition~\ref{prop-M2M4-firstimprovement},~$v_1(t)$ and~$M_2M_{2k-2}$ have decay controlled by~$M_{2k}^0$ and with respective rates~$[\min(\frac14,\frac3{16}+\frac12\delta)]^-$ and~$[\omega_0]^-$, where
  \begin{equation}\label{eq-rate-M2M2km2}
    \omega_0=\big(\min(\tfrac12,\tfrac38+\delta)+(\tfrac12-\tfrac1{2^{2k-3}}+\delta)\big)>(\tfrac38+\tfrac12-\tfrac1{2^{2k-3}}+\delta)\geqslant \tfrac34+\delta,
  \end{equation}
  since~$k\geqslant3$. Therefore, since~$\frac34+\delta > \frac12-\frac1{2^{2k-1}}+\delta$, we can apply Lemma~\ref{lemma-ineq-diff} and this ends the proof.
\end{proof}

We can now use the estimate on a moment of high order to obtain better rates for all lower order moments, as stated in the following proposition, which corresponds, with Definition~\ref{def-decay-controlled}, to the claimed Proposition~\ref{prop-improved-estimates-intro} in the introduction.

\begin{proposition}\label{prop-improvements-Mk}
  We suppose that Assumption~\ref{assumption-improvements} is satisfied. If~$M_{2k_0}^0$ is finite (for~$k_0\geqslant2$), then for all~$k\leqslant k_0$,~$M_{2k}$ has decay controlled by~$M_{2k_0}^0$ with rate~$[\min(1-\frac1{2^{2k-1}},\frac12-\frac1{2^{2k_0-1}}+\delta)]^-$.
\end{proposition}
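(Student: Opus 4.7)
My plan is a downward induction on~$k$, from $k_0$ down to $1$, exploiting the sharper evolution equation~\eqref{dtMk} rather than the inequality~\eqref{subsolMk} used in Proposition~\ref{prop-estimates-Mk}. The base case $k=k_0$ follows directly from Proposition~\ref{prop-estimates-Mk}, since $\delta<\tfrac12$ forces $\min(1-\tfrac{1}{2^{2k_0-1}},\,\tfrac12-\tfrac{1}{2^{2k_0-1}}+\delta)=\tfrac12-\tfrac{1}{2^{2k_0-1}}+\delta$, which is exactly the rate that proposition provides for $M_{2k_0}$ (with constants controlled by $M_{2k_0}^0$).

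The improvement at level $k<k_0$ comes from treating the $S_{2k}$ contribution in~\eqref{dtMk} via Lemma~\ref{lemma-assumption-m} instead of through the coarse lower bound $S_{2k}\geqslant(\delta-\tfrac12)M_{2k}$ used previously. More precisely, I would write
\[|S_{2k}|\leqslant\widetilde\alpha\sqrt{M_{2k}M_{2k+2}}+\beta M_{2k+2}\leqslant\varepsilon M_{2k}+C_\varepsilon M_{2k+2}\]
by Young's inequality for any $\varepsilon>0$, so that the $M_{2k+2}$ contribution enters as a forcing term decaying at rate $\lambda^*_{2k+2}:=\min(1-\tfrac{1}{2^{2k+1}},\tfrac12-\tfrac{1}{2^{2k_0-1}}+\delta)$ by the inductive hypothesis. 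The cross products $M_{2k-i}M_i$ in the sum are controlled by $M_2M_{2k-2}$ using the log-convexity bound from~\eqref{MlM2kml}; the contribution $2k\,S_1 M_{2k-1}$ is split by a second Young inequality into an absorbable $\varepsilon'M_{2k}$ plus a fast-decaying forcing (using $|S_1|\leqslant\widetilde\alpha M_2+\beta\sqrt{M_2M_4}$ and $|M_{2k-1}|\leqslant\sqrt{M_{2k-2}M_{2k}}$); and $S_0M_{2k}$ contributes only to a vanishing perturbation $v_1(t)$ of the coefficient. Collecting everything yields
\[\frac{\d^+}{\d t}M_{2k}\leqslant-\bigl(1-\tfrac{1}{2^{2k-1}}-\varepsilon-\varepsilon'-v_1(t)\bigr)M_{2k}+v_0(t),\]
with $v_0(t)$ decaying at the minimum of $\lambda^*_{2k+2}$ and the cross-term rate. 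Applying Lemma~\ref{lemma-ineq-diff} and letting $\varepsilon,\varepsilon'\to0$ gives the decay rate $[\min(1-\tfrac{1}{2^{2k-1}},\lambda^*_{2k+2})]^-$, which simplifies to the announced $[\min(1-\tfrac{1}{2^{2k-1}},\tfrac12-\tfrac{1}{2^{2k_0-1}}+\delta)]^-$, with the constant controlled by $M_{2k_0}^0$ (since by interpolation $M_{2k}^0\leqslant(M_{2k_0}^0)^{(k-1)/(k_0-1)}(M_2^0)^{(k_0-k)/(k_0-1)}$ and similarly for the forcing constants).

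The main obstacle I anticipate is a bookkeeping check that the cross-term rate does not become the binding constraint, because the downward induction does not yet provide an improved rate for $M_{2k-2}$ at the moment one treats $M_{2k}$. Using the coarse estimates from Propositions~\ref{prop-M2M4-firstimprovement} and~\ref{prop-estimates-Mk}, the rate of $M_2M_{2k-2}$ is at least $\min(\tfrac12,\tfrac38+\delta)+(\tfrac12-\tfrac{1}{2^{2k-3}}+\delta)$; a short case split on whether $\delta\gtrless\tfrac18$ shows this always strictly exceeds $\tfrac12+\delta$, hence dominates $\lambda^*_{2k}$. The terminal step $k=1$ is in fact simpler because the sum in~\eqref{dtMk} is empty and $M_1=0$, so only the $S_2$ bound and the $S_0$ perturbation contribute, directly yielding the rate $[\min(\tfrac12,\lambda^*_4)]^-=[\lambda^*_2]^-$.
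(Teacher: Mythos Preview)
Your proposal is correct and follows essentially the same downward induction as the paper, with the same key idea of replacing the crude bound $S_{2k}\geqslant(\delta-\tfrac12)M_{2k}$ by the Young-inequality estimate $|S_{2k}|\leqslant\varepsilon M_{2k}+C_\varepsilon M_{2k+2}$, and the same bookkeeping on the cross-term rate via Propositions~\ref{prop-M2M4-firstimprovement} and~\ref{prop-estimates-Mk}. The only stylistic difference is your treatment of $S_1M_{2k-1}$: the paper uses the simpler bound $|S_1M_{2k-1}|\leqslant(\widetilde\alpha\sqrt{M_2}+\beta\sqrt{M_4})M_{2k}$ from~\eqref{S1M2km1} (obtained via $|M_{2k-1}|\leqslant M_{2k}^{1-\frac1{2k}}$ and $M_2\leqslant M_{2k}^{1/k}$), which lets this term be absorbed directly into $v_1(t)$ without an extra Young split and spares you the parameter~$\varepsilon'$.
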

\begin{proof}
  We proceed once more by induction, from~$k=k_0$ down to~$k=1$. Indeed, since we have~$\delta<\eta(\overline{x}_0)\leqslant\frac12$, we get that~$\min(1-\frac1{2^{2k_0-1}},\frac12-\frac1{2^{2k_0-1}}+\delta)=\frac12-\frac1{2^{2k_0-1}}+\delta$, which corresponds to the rate given by Proposition~\ref{prop-estimates-Mk}. We fix~$k<k_0$ and suppose that the result is true for~$k+1$.
  We have, thanks to Lemma~\ref{lemma-assumption-m} and using Young’s inequality for some~$\varepsilon>0$:
  \begin{align*}
    |S_{2k}|&\leqslant\widetilde{\alpha}\int_\mathbb{R}|x-\overline{x}(t)|^{2k+1}g(x)\d x+\beta M_{2k+2}\\
            &\leqslant\int_\mathbb{R}|x-\overline{x}(t)|^{2k}\Big(\varepsilon+\frac{\widetilde{\alpha}^2}{4\varepsilon}|x-\overline{x}(t)|^2\Big)g(x)\d x+\beta M_{2k+2}  \leqslant\varepsilon M_{2k}+\left(\frac{\widetilde{\alpha}^2}{4\varepsilon}+\beta\right)M_{2k+2}.
  \end{align*}
  Therefore, as in the proof of Proposition~\ref{prop-estimates-Mk}, using~\eqref{MlM2kml},~\eqref{eq-S0M2}, and~\eqref{S1M2km1} in the differential equation~\eqref{dtMk} for~$M_{2k}$, we get
  \begin{equation*}
    \frac{\d}{\d t}M_{2k}\leqslant\big(-1+\tfrac{1}{2^{2k-1}}+\varepsilon+v_1(t)\big)M_{2k}+\left(\tfrac1{2^{2k}}\sum_{\ell=2}^{2k-2}\tbinom{2k}{\ell}\right)M_2M_{2k-2}+\left(\frac{\widetilde{\alpha}^2}{4\varepsilon}+\beta\right)M_{2k+2},
  \end{equation*}
  where~$v_1(t)=(2k+1)\widetilde{\alpha}\sqrt{M_2}+\beta(2k\sqrt{M_4}+M_2)$ has decay controlled by~$M_4^0$ (and hence by~$M_{2k_0}^0$) and rate~$[\min(\frac14,\frac3{16}+\frac12\delta)]^-$.  By induction~$M_{2k+2}$ has decay controlled by~$M_{2k_0}^0$ and with rate~$[\widetilde{\omega}_0]^-$ where~$\widetilde{\omega}_0=\min(1-\frac1{2^{2k+1}},\frac12-\frac1{2^{2k_0-1}}+\delta)$. And we know by Propositions~\ref{prop-M2M4-firstimprovement} and~\ref{prop-estimates-Mk} that~$M_2M_{2k-2}$ has decay controlled by~$M_{2k}^0$ (and hence by~$M_{2k_0}^0$) and with rate~$[\omega_0]^-$ with~$\omega_0$ given in~\eqref{eq-rate-M2M2km2}, so when~$k\geqslant3$ we have~$\omega_0>\frac34+\delta>\frac12-\frac1{2^{2k_0-1}}+\delta\geqslant\widetilde{\omega}_0$.

  When~$k=2$ we also obtain~$\omega_0=\min(\frac34+2\delta,1)>\frac12-\frac1{2^{2k_0-1}}+\delta\geqslant\widetilde{\omega}_0$ since~$\delta<\frac12$. Finally when~$k=1$ the sum on~$\ell$ is empty.

  So, in all cases we get thanks to Lemma~\ref{lemma-ineq-diff} that~$M_{2k}$ has decay controlled by~$\max(M_{2k}^0,M_{2k_0}^0)$ (and therefore by~$M_{2k_0}^0$ only) and with rate~$[\min(1-\tfrac{1}{2^{2k-1}}-\varepsilon,\widetilde{\omega}_0)]^-$, for any~$\varepsilon>0$, which ends the proof.
\end{proof}

When the initial condition has enough moments, we then see thanks to Proposition~\ref{prop-improvements-Mk} that all~$M_{2k}$ with~$k\geqslant2$ decay with rates strictly greater than~$\frac12$, and that~$M_2$ decays with rate~$[\frac12]^-$. Therefore we expect all the moments~$M_{2k}$ to decay “faster” than~$M_2$, and we will see in the next section that this property is important to study the self-similar behaviour of the solution as time goes to infinity. However, to have such a property, we have to get a lower bound for~$M_2(t)$. By using a refinement of Lemma~\ref{lemma-ineq-diff}, we could prove that there exists~$C_2\geqslant0$ such that~$|M_2(t)-C_2e^{-\frac12t}|\leqslant Ce^{-\omega_0t}$ with~$\omega_0>\frac12$, but this is not sufficient to get a lower bound on~$M_2$, as we could have~$C_2=0$.

To overcome this difficulty, instead of asking the fourth moment~$M_4$ to be small 
initially, we will ask~$\frac{M_{2k_0}}{M_2}$ to be small initially, for~$k_0$ 
sufficiently large. This is for instance the case if we shrink the initial profile 
around~$\overline{x}_0$ by a given parameter~$\frac1{\varepsilon}$, which has the effect of scaling 
any initial moment~$M_{2k}$ by a factor~$\varepsilon^{2k}$. Before proving that this condition is 
enough to get lower bounds on~$M_2$, we will need an adaptation of Lemma~\ref{lemma-ineq-diff} 
in a nonlinear setting (the main difference being that we now require smallness of the initial condition~$y(0)$ and of the parameter~$\sigma$),
which in stated in the following result.
\begin{lemma}\label{lemma-ineq-diff-nonlin}
  We suppose that~$y$ is a nonnegative continuous function satisfying for all~$t>0$ :
  \begin{equation*}
    \frac{\d^+}{\d t}y(t)\leqslant\big(-\lambda+v_1(y(t))\big)y(t)+v_0(t),
  \end{equation*}
  where~$v_1(r)\leqslant Cr^\theta$ for all~$r\geqslant0$ sufficiently small (with~$\theta>0$ and~$C>0$), and~$v_0(t)$ has a decay controlled by the parameter~$\sigma$ and with rate~$[\omega_0]^-$.

  Then, if~$y(0)$ and~$\sigma$ are small enough,~$y(t)$ has a decay controlled by~$\max(y(0),\sigma)$ and with rate~$[\min(\lambda,\omega_0)]^-$.
\end{lemma}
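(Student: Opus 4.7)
The plan is to reduce this nonlinear differential inequality to the linear one of Lemma~\ref{lemma-ineq-diff} via a bootstrap (continuation) argument. The essential idea is that if we can guarantee that $y(t)$ stays below some small threshold $\varepsilon$, then $v_1(y(t))\leqslant C\varepsilon^\theta$ becomes a uniformly small perturbation of the coefficient $-\lambda$, and the statement follows from a Grönwall estimate via Lemma~\ref{lemma-dplus}. The issue is circular: to bound $y(t)$ uniformly we need the decay, and to get the decay we need the uniform bound. We resolve this by fixing a continuation time and closing the argument strictly, using the smallness of $y(0)$ and $\sigma$.

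More precisely, I would fix $\widetilde{\lambda}<\min(\lambda,\omega_0)$ and choose a $\widetilde{\omega}_0\in(\widetilde{\lambda},\omega_0)$. Then I would pick $\varepsilon>0$ small enough so that simultaneously the bound $v_1(r)\leqslant Cr^\theta$ is valid for $r\in[0,\varepsilon]$ and so that $C\varepsilon^\theta<\lambda-\widetilde{\lambda}$. Define
\[
T=\sup\{t\geqslant 0 : y(s)\leqslant\varepsilon \text{ for all } s\in[0,t]\},
\]
which is positive by continuity of $y$ as soon as $y(0)<\varepsilon$. On $[0,T)$, the inequality becomes $\frac{\d^+}{\d t}y(t)\leqslant-\widetilde{\lambda}\,y(t)+v_0(t)$, so Lemma~\ref{lemma-dplus} together with the bound $v_0(t)\leqslant C_0(\sigma)e^{-\widetilde{\omega}_0 t}$ (valid by the decay assumption on $v_0$ with $C_0(\sigma)\to 0$ as $\sigma\to 0$) yields
\[
y(t)\leqslant y(0)e^{-\widetilde{\lambda}t}+\frac{C_0(\sigma)}{\widetilde{\omega}_0-\widetilde{\lambda}}e^{-\widetilde{\lambda}t}=:C_1(y(0),\sigma)\,e^{-\widetilde{\lambda}t},
\]
where $C_1(y(0),\sigma)\to 0$ as $\max(y(0),\sigma)\to 0$.

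The main (and only real) obstacle is to close the bootstrap, i.e.\ to exclude $T<+\infty$. For this I would restrict to the regime where $y(0)$ and $\sigma$ are sufficiently small that $C_1(y(0),\sigma)<\varepsilon$ strictly. The estimate above then gives $y(t)<\varepsilon$ on $[0,T)$, hence $y(T)\leqslant C_1(y(0),\sigma)e^{-\widetilde{\lambda}T}<\varepsilon$ by continuity. Continuity again extends $y\leqslant\varepsilon$ to a right-neighborhood of $T$, contradicting maximality unless $T=+\infty$. Consequently the displayed inequality holds for all $t\geqslant 0$, with constant $C_1(y(0),\sigma)$ vanishing as $\max(y(0),\sigma)\to 0$, which is exactly the definition of decay controlled by $\max(y(0),\sigma)$ with rate $[\min(\lambda,\omega_0)]^-$ (since $\widetilde{\lambda}$ was arbitrary in $(0,\min(\lambda,\omega_0))$).
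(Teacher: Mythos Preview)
Your bootstrap argument is essentially the paper's, and the reduction to a Gr\"onwall estimate via Lemma~\ref{lemma-dplus} is correct. The one gap is in the last sentence, ``since $\widetilde{\lambda}$ was arbitrary'': your threshold $\varepsilon$ --- and hence the smallness required of $y(0)$ and $\sigma$ --- depends on $\widetilde{\lambda}$ through the constraint $C\varepsilon^\theta<\lambda-\widetilde{\lambda}$. When $\lambda\leqslant\omega_0$, this forces $\varepsilon\to0$ as $\widetilde{\lambda}\to\lambda$, so you have not produced a \emph{single} smallness condition under which the decay holds for every $\widetilde{\lambda}<\min(\lambda,\omega_0)$, which is what the lemma asserts.

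The paper closes this with one extra step. One fixes a single auxiliary pair $(\widetilde{\lambda},\widetilde{\omega}_0)$ once and for all, runs your bootstrap to obtain $y(t)\leqslant C_1(y(0),\sigma)e^{-\widetilde{\lambda}t}$ under a fixed threshold, and then observes that $v_1(y(t))\leqslant C\,y(t)^\theta$ now has, as a function of $t$, decay controlled by $\max(y(0),\sigma)$ with rate $[\theta\widetilde{\lambda}]^-$. Invoking the linear Lemma~\ref{lemma-ineq-diff} (with this time-dependent $v_1$) then yields the full rate $[\min(\lambda,\omega_0)]^-$ with no further restriction on $y(0)$ or $\sigma$.
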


\begin{proof}
  We fix~$\widetilde{\omega}_0<\omega_0$, then~$\widetilde{\lambda}<\min(\lambda,\widetilde{\omega}_0)$, and we pick~$r>0$ such that~$\lambda-Cr^\theta>\widetilde{\lambda}$ (and small enough so that~$v_1(\widetilde{r})\leqslant C\widetilde{r}^\theta$ as soon as~$\widetilde{r}\leqslant r$). Therefore, as soon as~$y(t)\leqslant r$, we have
  \[\frac{\d^+}{\d t}y(t)\leqslant-\widetilde{\lambda}y(t)+\widetilde{C}(\sigma)e^{-\widetilde{\omega}_0t},\]
  where~$\widetilde{C}(\sigma)\to0$ as~$\sigma\to0$. We suppose that~$y(0)+\frac{\widetilde{C}(\sigma)}{\widetilde{\omega}_0-\widetilde{\lambda}}\leqslant r$.

  We denote~$T=\sup\{t>0,\forall s\in[0,t],y(s)\leqslant r\}$. By continuity of~$y$, since~$y(0)<r$, we have~$T>0$ and therefore for all~$t\in[0,T]$,
  \[y(t)\leqslant\Big(y(0)+\widetilde{C}(\sigma)\frac{1-e^{-(\widetilde{\omega}_0-\widetilde{\lambda})t}}{\widetilde{\omega}_0-\widetilde{\lambda}}\Big)e^{-\widetilde{\lambda}t}\leqslant re^{-\widetilde{\lambda}t},\]
  showing that~$T=+\infty$. Indeed, if it was not the case, we would have~$y(T)<r$, in contradiction with the definition of~$T$, by continuity of~$y$. Therefore we also obtain
  \[v_1(y(t))\leqslant C\Big(y(0)+\widetilde{C}(\sigma)\frac{1}{\widetilde{\omega}_0-\widetilde{\lambda}}\Big)^\theta e^{-\theta\widetilde{\lambda}t},\]
  showing that~$v_1(y(t))$ has a rate controlled by~$\max(y(0),\sigma)$ with rate~$[\theta\widetilde{\lambda}]^-$. We can therefore apply Lemma~\ref{lemma-ineq-diff-nonlin}, as we now have a linear estimate (provided~$y(0)+\frac{\widetilde{C}(\sigma)}{\widetilde{\omega}_0-\widetilde{\lambda}}\leqslant r$), and this ends the proof.
\end{proof}

We are now ready to prove the following result, which leads to the statements of Theorem~\ref{thm-moments-decay}.
\begin{proposition}\label{prop-moments-decay}
  Under Assumption~\ref{assumption-improvements}, if~$\frac{M_{2k_0}^0}{M_2^0}$ is sufficiently small and if~$\frac1{2^{2k_0-1}}<\delta$, then~$\frac{M_{2k_0}}{M_2}$ has decay controlled by its initial value~$\frac{M_{2k_0}^0}{M_2^0}$ and with rate~$[\delta-\frac1{2^{2k_0-1}}]^-$.

  Furthermore, in that case, there exists~$C_{k_0}(\frac{M_{2k_0}^0}{M_2^0})$, converging to~$1$ as~$\frac{M_{2k_0}^0}{M_2^0}\to0$ such that
  \[M_2(t)\geqslant C_{k_0}\Big(\frac{M_{2k_0}^0}{M_2^0}\Big)M_2^0\,e^{-\frac{t}2}.\]

  Consequently, for all~$k\geqslant2$ (with~$k\leqslant k_0$),~$\frac{M_{2k}}{M_2}$ has decay controlled by~$\frac{M_{2k_0}^0}{M_2^0}$ and with rate~$[\min(\frac12-\frac1{2^{2k-1}},\delta-\frac1{2^{2k_0-1}})]^-$.
\end{proposition}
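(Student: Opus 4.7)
The plan is to analyze the ratio $y(t):=M_{2k_0}(t)/M_2(t)$ directly, via a differential inequality. Combining~\eqref{subsolMk} (applied with $k=2k_0$) with~\eqref{dtM2}, the logarithmic derivatives telescope into
\[
  \frac{\d^+ y}{\d t}\leqslant\Big[\tfrac{1}{2^{2k_0-1}}-\eta(\overline{x})+\tfrac{S_2}{M_2}\Big]y+\frac{1}{M_2}\Big(\tfrac{1}{2^{2k_0}}\sum_{i=2}^{2k_0-2}\tbinom{2k_0}{i}M_iM_{2k_0-i}\Big)+\frac{2k_0\,S_1 M_{2k_0-1}}{M_2}.
\]
Theorem~\ref{thm-stability-dirac} gives $\eta(\overline{x})\geqslant\delta$, while the bound $|S_2/M_2|\leqslant\widetilde{\alpha}\sqrt{M_4/M_2}+\beta M_4/M_2$ combined with the H\"older interpolation $M_4/M_2\leqslant y^{1/(k_0-1)}$ rewrites the bracket as $-(\delta-\tfrac{1}{2^{2k_0-1}})+v_1(y)$, with $v_1(r)$ bounded by a constant times $r^{1/(2(k_0-1))}$ near zero. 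The source terms are bounded by $CM_{2k_0-2}$ (for the sum, using $|M_iM_{2k_0-i}|\leqslant M_2M_{2k_0-2}$) and by $(\widetilde{\alpha}+\beta\sqrt{M_4/M_2})\sqrt{M_{2k_0-2}M_{2k_0}}$ (for the $S_1$ term, via Cauchy--Schwarz); both decay, by Proposition~\ref{prop-improvements-Mk}, at a rate strictly exceeding $\delta-\tfrac{1}{2^{2k_0-1}}$, with constants vanishing with $M_{2k_0}^0$. The key bookkeeping observation is Jensen's inequality $M_{2k_0}^0\geqslant(M_2^0)^{k_0}$, which forces $M_2^0\leqslant y(0)^{1/(k_0-1)}\to 0$ whenever $y(0)\to 0$; every constant controlled by $M_{2k_0}^0$ or by $M_2^0$ is therefore automatically controlled by $y(0)$. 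An application of Lemma~\ref{lemma-ineq-diff-nonlin} then yields the first assertion.

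For the lower bound on $M_2$, the plan is to integrate $\tfrac{\d}{\d t}\ln M_2=-\tfrac{1}{2}+S_0-S_2/M_2$ (read off~\eqref{dtM2}). Using $|S_0|\leqslant\widetilde{\alpha}\sqrt{M_2}+\beta M_2$ together with the $[1/2]^-$ decay of $M_2$ provided by Proposition~\ref{prop-improvements-Mk}, and $|S_2/M_2|\leqslant\widetilde{\alpha}\sqrt{M_4/M_2}+\beta M_4/M_2$ together with the decay of $y$ just established, both integrals $\int_0^\infty|S_0|\,\d s$ and $\int_0^\infty|S_2/M_2|\,\d s$ are convergent and tend to zero as $y(0)\to 0$ (the former via the Jensen estimate $M_2^0\to 0$, the latter via $y\to 0$). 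This delivers $M_2(t)\geqslant C_{k_0}(y(0))M_2^0\,e^{-t/2}$ with $C_{k_0}\to 1$.

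The sharper rates for the ratios $y_k(t):=M_{2k}(t)/M_2(t)$, $2\leqslant k<k_0$, are to be obtained by downward induction on $k$, the base case $k=k_0$ being the first claim. Since $2k+2\leqslant 2k_0$, the moment $M_{2k+2}$ is finite and the \emph{equality}~\eqref{dtMk} is available, yielding the improved base coefficient $-(1/2-1/2^{2k-1})$ in the derivative of $y_k$. The delicate residual term $|S_{2k}|/M_2\leqslant\widetilde{\alpha}\sqrt{y_k y_{k+1}}+\beta y_{k+1}$ is split by Young's inequality $\sqrt{y_k y_{k+1}}\leqslant\varepsilon y_k+y_{k+1}/(4\varepsilon)$ into a multiplicative correction $\varepsilon y_k$ and an additive source proportional to $y_{k+1}$, which decays at rate $\lambda_{k+1}$ by the induction hypothesis; all other source contributions decay at rate at least $1/2$. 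Lemma~\ref{lemma-ineq-diff-nonlin} and the limit $\varepsilon\to 0$ then deliver decay of $y_k$ at rate $[\min(1/2-1/2^{2k-1},\lambda_{k+1})]^-=[\lambda_k]^-$. The hardest part of the proof is the bookkeeping of smallness parameters: tracking every constant back to the ratio $y(0)$, via the Jensen bound $M_{2k_0}^0\geqslant(M_2^0)^{k_0}$, is the crucial step that turns the natural $M_{2k_0}^0$- or $M_2^0$-control into the desired $M_{2k_0}^0/M_2^0$-control.
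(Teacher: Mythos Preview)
Your overall strategy coincides with the paper's: derive a differential inequality for $y=M_{2k_0}/M_2$, feed it into Lemma~\ref{lemma-ineq-diff-nonlin}, then integrate the logarithmic derivative of $M_2$ to get the lower bound. The cancellation of $S_0$ in your bracket is a nice simplification compared to the paper, which instead combines~\eqref{estimate-dtM2k} with the explicit lower bound~\eqref{dtM2-lowerbound} and carries a few more terms in the perturbation $\widetilde w$.

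There is, however, one imprecision in your treatment of the $S_1$ source term. You bound it by $(\widetilde{\alpha}+\beta\sqrt{M_4/M_2})\sqrt{M_{2k_0-2}M_{2k_0}}$ and then claim this ``decays by Proposition~\ref{prop-improvements-Mk}''. The factor $\sqrt{M_4/M_2}$ is not covered by that proposition, precisely because you do not yet have a lower bound on $M_2$; as written, the $\beta$-part is a $y$-dependent quantity, not a pure $v_0(t)$. The fix is easy: either (as the paper does via~\eqref{S1M2km1}) bound $|S_1M_{2k_0-1}|\leqslant(\widetilde{\alpha}\sqrt{M_2}+\beta\sqrt{M_4})M_{2k_0}$ and absorb the whole term into $v_1(y)y$ (using $\sqrt{M_2}\leqslant\sqrt{M_4/M_2}\leqslant y^{1/(2(k_0-1))}$ and $\sqrt{M_4}\leqslant M_4/M_2\leqslant y^{1/(k_0-1)}$), or note via H\"older that $\sqrt{M_4/M_2}\sqrt{M_{2k_0-2}M_{2k_0}}\leqslant M_2 y$, again absorbable into $v_1(y)y$.

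For the last part, your downward induction on $k$ is more explicit than the paper's one-line appeal to Proposition~\ref{prop-improvements-Mk} divided by the lower bound on $M_2$; both routes work, and yours makes the control of the constants by $M_{2k_0}^0/M_2^0$ (rather than just $M_{2k_0}^0$) more transparent.
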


\begin{proof}
We compute
  \[    \frac{\d^+}{\d t} \big(\frac{M_{2k_0}}{M_2}\big)=\frac1{M_2}\frac{\d^+ M_{2k_0}}{\d t}  - \frac1{M_2}\frac{\d M_{2}}{\d t} \big(\frac{M_{2k_0}}{M_2}\big).\]
  Thanks to~\eqref{dtM2} and the estimates~\eqref{eq-S0M2}~\eqref{eq-S2M2M4}, we have
  \begin{equation}\label{dtM2-lowerbound}
    \frac{\d}{\d t} M_2\geqslant(-\frac12-\widetilde{\alpha}\sqrt{M_2}-\beta M_2)M_2-\widetilde{\alpha}\sqrt{M_2M_4}-\beta M_4,
  \end{equation}
  and therefore, using~\eqref{estimate-dtM2k}, since~$\tfrac1{2^{2k}}\sum_{\ell=2}^{2k-2}\tbinom{2k}{\ell}\leqslant1$, we obtain
  \begin{equation}\label{dtM2k0}
    \frac{\d^+}{\d t}\big(\frac{M_{2k_0}}{M_2}\big)\leqslant\big(-\delta+\tfrac{1}{2^{2k_0-1}}+\widetilde{w}(t)\big)\frac{M_{2k_0}}{M_2}+M_{2{k_0}-2},
  \end{equation}
  where
  \begin{equation*}
    \widetilde{w}(t)=\widetilde{\alpha}\big((2k+2)\sqrt{M_2}+\sqrt{\frac{M_4}{M_2}}\big)+\beta\big(2k\sqrt{M_4}+2M_2+\frac{M_4}{M_2}\big).
  \end{equation*}
  Using~\eqref{eq-M2M4}, we get
  \begin{equation}\label{eq-M2-M4surM2}
    M_2\leqslant\sqrt{M_4}=\frac{M_4}{\sqrt{M_4}}\leqslant\frac{M_4}{M_2},
  \end{equation}
  and therefore we obtain
  \begin{equation*}
 \widetilde{w}(t)\leqslant(2k+3)\widetilde{\alpha}\,\sqrt{\frac{M_4}{M_2}}+(2k+3)\beta\,\frac{M_4}{M_2}.
  \end{equation*}
    
  Now, using~\eqref{holder-Mi} with~$i=1$,~$\ell=4$ and~$j=k_0$, we obtain
  \begin{equation}\label{M4surM2}
    \frac{M_4}{M_2}\leqslant\frac{M_2^{\frac{2k_0-4}{2k_0-2}}M_{2k_0}^{\frac{2}{2k_0-2}}}{M_2}=\big(\frac{M_{2k_0}}{M_2}\big)^{\frac{2}{2k_0-2}},
  \end{equation}
  and therefore we get
  \begin{equation}\label{estimate-wtilde}
    \widetilde{w}(t) \leqslant(2k+3)\widetilde{\alpha}\big(\frac{M_{2k_0}}{M_2}\big)^{\frac{1}{2k_0-2}} +(2k+3)\beta\big(\frac{M_{2k_0}}{M_2}\big)^{\frac{2}{2k_0-2}}.
  \end{equation}

  Finally, thanks to Proposition~\ref{prop-improvements-Mk},~$M_{2k_0-2}$ has decay controlled by the initial moment~$M_{2k_0}^0$ and with rate~$[\min(1-\frac1{2^{2k_0-3}},\frac12-\frac1{2^{2k_0-1}}+\delta)]^-$, and since we have
  \[M_{2k_0}\leqslant\big(\frac{M_{2k_0}}{M_2}\big)\cdot M_2\leqslant\big(\frac{M_{2k_0}}{M_2}\big)^{1+\frac{2}{2k_0-2}},\]
  we get that~$M_{2k_0}^0\to0$ if~$\frac{M_{2k_0}^0}{M_2^0}\to0$, so~$M_{2k_0-2}$ has decay controlled by~$\frac{M_{2k_0}^0}{M_2^0}$.
  Thanks to~\eqref{dtM2k0} and~\eqref{estimate-wtilde}, we can apply Lemma~\ref{lemma-ineq-diff-nonlin} with~$y(t)=\frac{M_{2k_0}}{M_2}$ and~$\sigma=\frac{M_{2k_0}^0}{M_2^0}=y(0)$, the function~$w$ being given by~$v_1(r)=(2k+3)(\widetilde{\alpha}\,r^{\theta}+\beta\,r^{2\theta})$, where~$\theta=\frac{1}{2k_0-2}$. We therefore obtain that if~$\frac{M_{2k_0}^0}{M_2^0}$ is sufficiently small, then~$\frac{M_{2k_0}}{M_2}$ has decay controlled by its initial value~$\frac{M_{2k_0}^0}{M_2^0}$ and with rate~$[\delta-\tfrac{1}{2^{2k_0-1}}]^-$.

  Still denoting~$\sigma=\frac{M_{2k_0}^0}{M_2^0}$, if we pick~$\omega_0<\delta-\tfrac{1}{2^{2k_0-1}}$, we then get that there exists~$C(\sigma)$, converging to~$0$ as~$\sigma\to0$, such that we have
  \[\frac{M_{2k_0}}{M_2}\leqslant C(\sigma) e^{-\omega_0t}.\]
  Therefore, using~\eqref{dtM2-lowerbound},~\eqref{eq-M2-M4surM2} and~\eqref{M4surM2}, we obtain
  \begin{align*}
    \frac{\d}{\d t}M_2&\geqslant\Big(-\frac12 -2\widetilde{\alpha}\sqrt{\frac{M_4}{M_2}}-2\beta\frac{M_4}{M_2}\Big)\,M_2\\
                      &\geqslant\Big(-\frac12 -2\widetilde{\alpha}\big(\frac{M_{2k_0}}{M_2}\big)^{\theta}-2\beta\big(\frac{M_{2k_0}}{M_2}\big)^{2\theta}\Big)\,M_2\\
    &\geqslant\Big(-\frac12 - \widetilde{C}(\sigma)\,e^{-\theta\omega_0t} \Big)\,M_2,
  \end{align*}
  where
  \[\widetilde{C}(\sigma)=2\widetilde{\alpha}C(\sigma)^{\theta}+2\beta C(\sigma)^{2\theta},\]
  which still converges to~$0$ as~$\sigma\to0$. Solving this differential inequality for~$M_2$, we get
  \[M_2(t)\geqslant e^{-\frac{t}2+\frac{\widetilde{C}(\sigma)}{\omega_0\theta}(e^{-\omega_0\theta t}-1)}M_2^0\geqslant e^{-\frac{\widetilde{C}(\sigma)}{\omega_0\theta}}M_2^0 e^{-\frac{t}2},\]
  which gives the expected lower bound on~$M_2$.

  The last part of the proposition is a direct application of Proposition~\ref{prop-improvements-Mk}, using the fact that~$M_{2k_0}\to0$ as soon as~$\frac{M_{2k_0}^0}{M_2^0}\to0$.
\end{proof}

\section{Convergence to a self-similar profile}\label{section-4}

The goal of this section is to prove Theorem~\ref{th-cv-gamma}, 
which states the large time convergence of the centered and rescaled 
measure~$\gamma$ to a 
self-similar profile in the Fourier distance defined in~\eqref{def-fourier-distance}, for some value of~$s$.

We first discuss some prior results obtained on the following equation 
\begin{equation}\label{eq_gamma0}
  \partial_t\widehat{\gamma}(t,\xi)=\widehat{\gamma}^2\left(t,\tfrac{\xi}2 \right)-\widehat{\gamma}(t,\xi)+\frac14\xi\partial_\xi\widehat{\gamma}(t,\xi),
\end{equation}
which is precisely equation~\eqref{evolutionGammahat} in the case~$R \equiv 0$, that is when~$m$ is 
constant and then there is no selection. This equation 
on the Fourier transform of the rescaled distribution can also be derived from the kinetic model 
of Boltzmann-type studied in~\cite{Par-Tos-2006}. In this work, the authors proved a contraction property 
of the operator of equation~\eqref{eq_gamma0} in the space of probability measures that satisfy~\eqref{momentsgamma}
and have finite moments up to order or~$s = 2+ \delta$ with~$\delta>0$ well chosen,
endowed with the Fourier distance~$d_s$. Also, they proved 
the convergence in the Fourier distance of the rescaled measure towards the unique stationary solution of~\eqref{eq_gamma0} 
 with mass~$1$, centered, and second moment
equal to~$1$, which is denoted by~$\gamma_\infty$ and defined in~\eqref{def-gamma-inf}.

Here is the outline of the proof. First, we derive an estimate
on the distance between~$\gamma$ and~$\gamma_\infty$, under the assumption that 
the term~$R(t,\xi)$ exponentially decreases in time and is also controlled by a~$|\xi|^s$.
Next, we prove that, for all~$(t,\xi) \in \R_+ \times \R$,~$R(t,\xi)$ satisfy theses 
conditions in the framework we set. We eventually combine these results to conclude to the 
statement of Theorem~\ref{th-cv-gamma}.

The first step of the proof is the following result.
\begin{proposition} \label{prop-convergence-gamma}
  We fix~$s\in(2,3)$ and we set~$\lambda_s=1-\frac{s}4-2^{1-s}$ 
  (we have~$\lambda_s>0$ since~$s\mapsto\lambda_s$ is strictly concave 
  and~$\lambda_2=\lambda_3=0$). We suppose there exist~$L>0$ and~$c>0$ such that
  \begin{equation}\label{assum_R_evolution}
    |R (t,\xi)|\leqslant|\xi|^sLe^{-ct}, \quad \forall \xi \in \R.
  \end{equation}

  Then, for all time~$t$,  
\begin{itemize}
  \item if~$c=\lambda_s$, then we have
   \begin{equation*}
    d_s(\gamma,\gamma_\infty)(t)\leqslant d_s(\gamma_0,\gamma_\infty)e^{-\lambda_st}+Lt e^{-\lambda_s t} ,
  \end{equation*}
  \item otherwise, we have 
  \begin{equation*}
    d_s(\gamma,\gamma_\infty)(t)\leqslant d_s(\gamma_0,\gamma_\infty)e^{-\lambda_st}+L \frac{e^{-c t} - e^{-\lambda_s t}}{\lambda_s - c}.
  \end{equation*}
\end{itemize}
\end{proposition}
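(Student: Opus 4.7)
\textbf{Proof plan for Proposition~\ref{prop-convergence-gamma}.}

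The plan is to subtract the stationary equation satisfied by $\widehat{\gamma_\infty}$ from \eqref{evolutionGammahat} and study the difference $h(t,\xi):=\widehat{\gamma}(t,\xi)-\widehat{\gamma_\infty}(\xi)$, which satisfies
\[
\partial_t h(t,\xi)+h(t,\xi)-\tfrac14\xi\partial_\xi h(t,\xi)\;=\;\bigl[\widehat{\gamma}(t,\tfrac{\xi}2)^2-\widehat{\gamma_\infty}(\tfrac{\xi}2)^2\bigr]+R(t,\xi).
\]
The key algebraic observation is the factorization $a^2-b^2=(a+b)(a-b)$: since $\widehat{\gamma}$ and $\widehat{\gamma_\infty}$ are Fourier transforms of probability measures, the sum in the first bracket is bounded in modulus by $2$, so that the nonlinear term is controlled by $2\,|h(t,\xi/2)|$, which is the crucial ingredient to close a contraction-type estimate.

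Next, I would apply the method of characteristics, integrating along the curves $\tau\mapsto\xi_1 e^{(t-\tau)/4}$ (so that the transport term is absorbed). Writing Duhamel's formula for $h$ along these characteristics starting at $\xi_1$ at time $t$ yields
\[
h(t,\xi_1)\;=\;e^{-t}\,h(0,\xi_1 e^{t/4})\;+\;\int_0^t e^{-(t-\tau)}\bigl[\widehat{\gamma}(\tau,\tfrac{\eta_\tau}2)^2-\widehat{\gamma_\infty}(\tfrac{\eta_\tau}2)^2+R(\tau,\eta_\tau)\bigr]\d\tau,
\]
with $\eta_\tau:=\xi_1 e^{(t-\tau)/4}$. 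Dividing by $|\xi_1|^s$, using the factorization above together with $|h(\tau,\eta_\tau/2)|\leqslant 2^{-s}|\eta_\tau|^s\,U(\tau)$ where $U(\tau):=d_s(\gamma(\tau,\cdot),\gamma_\infty)$, using the assumption \eqref{assum_R_evolution} on $R$, and noticing that $|\eta_\tau|^s/|\xi_1|^s=e^{s(t-\tau)/4}$, I obtain after taking the supremum over $\xi_1\neq 0$ the integral inequality
\[
U(t)\;\leqslant\;U(0)\,e^{-(1-s/4)t}\;+\;\int_0^t e^{-(1-s/4)(t-\tau)}\bigl[2^{1-s}U(\tau)+L\,e^{-c\tau}\bigr]\d\tau.
\]

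To finish, I would denote by $G(t)$ the right-hand side and observe that $U(t)\leqslant G(t)$ with $G(0)=U(0)$. Differentiating $G$ (which is admissible since the integrand is locally bounded) gives
\[
G'(t)\;=\;-(1-\tfrac{s}4)\,G(t)+2^{1-s}U(t)+L\,e^{-ct}\;\leqslant\;-\lambda_s\,G(t)+L\,e^{-ct},
\]
using $U\leqslant G$ and the definition $\lambda_s=1-\frac{s}4-2^{1-s}$. Multiplying by $e^{\lambda_s t}$ turns this into $\bigl(e^{\lambda_s t}G(t)\bigr)'\leqslant L\,e^{(\lambda_s-c)t}$, which integrates directly: when $c=\lambda_s$ the right-hand side is simply $L$, giving the bound $G(t)\leqslant(U(0)+Lt)e^{-\lambda_s t}$; otherwise one integrates the exponential to recover the factor $(e^{-ct}-e^{-\lambda_s t})/(\lambda_s-c)$, yielding the second estimate. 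Since $U(t)\leqslant G(t)$, the proposition follows.

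The main obstacle I anticipate is purely technical: justifying that $U(t)$ is finite and measurable so that the above integrals make sense, and turning the integral inequality for a supremum into a clean differential inequality. Introducing the majorant $G(t)$ sidesteps this cleanly, since $G$ is an absolutely continuous function by construction, and one only needs $U$ to be locally bounded, which follows from the assumption $d_s(\gamma_0,\gamma_\infty)<\infty$ together with the a priori bound $|h|\leqslant 2$ away from $\xi=0$ and the vanishing of $h$ up to order $2$ at $\xi=0$ thanks to \eqref{momentsgamma} and the corresponding moments of $\gamma_\infty$.
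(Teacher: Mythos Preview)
Your proposal is correct and follows essentially the same route as the paper: both integrate the equation for $\widehat{\gamma}-\widehat{\gamma_\infty}$ along the characteristics $\xi\mapsto\xi e^{(t-\tau)/4}$, use the factorization $|\widehat{\gamma}^2-\widehat{\gamma_\infty}^2|\leqslant 2|\widehat{\gamma}-\widehat{\gamma_\infty}|$ (valid since both have modulus at most $1$), apply the hypothesis on $R$, and arrive at the same integral Grönwall inequality for $d_s(\gamma,\gamma_\infty)$. The only cosmetic difference is in how the Grönwall step is concluded: the paper introduces the auxiliary function $z(t)=e^{-2^{1-s}t}\bigl(d_s(\gamma_0,\gamma_\infty)+2^{1-s}\int_0^t e^{(1-s/4)\tau}d_s(\gamma,\gamma_\infty)(\tau)\,\d\tau\bigr)$ and bounds it directly, whereas you differentiate the full right-hand side $G(t)$; both are standard variants yielding the identical final estimate.
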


\begin{proof}
  From~\eqref{evolutionGammahat}, we obtain
  \[
    \frac{\d}{\d t}\left(e^t\widehat{\gamma}(t,e^{-\frac{t}4}\xi)\right)=e^t\widehat{\gamma}^2\left(t,e^{-\frac{t}4}\tfrac{\xi}2\right)+e^tR\left(t,e^{-\frac{t}4}\xi\right), 
  \]
  and so we get the following Duhamel formulation
  \begin{equation}\label{DuhamelR}
    e^t\widehat{\gamma}\left(t,e^{-\frac{t}4}\xi\right)=\widehat{\gamma_0}+\int_0^te^\tau\widehat{\gamma}^2\left(\tau,e^{-\frac{\tau}4}\tfrac{\xi}2\right)\d \tau+\int_0^te^\tau R\left(\tau,e^{-\frac{\tau}4}\xi\right)\d \tau. 
  \end{equation}

  Since~$\gamma_\infty$ is a stationary solution to~\eqref{evolutionGammahat} with~$R=0$, applying then in this case~\eqref{DuhamelR} with~$\widehat{\gamma}(t,\xi)=\widehat{\gamma_\infty}(\xi)$, we obtain
  \begin{equation}\label{DuhamelStationary}
    e^t\widehat{\gamma_\infty}\left(e^{-\frac{t}4}\xi\right)=\widehat{\gamma_\infty}+\int_0^te^\tau\widehat{\gamma_\infty}^2 \left(e^{-\frac{\tau}4}\tfrac{\xi}2\right)\d \tau.
  \end{equation}

  First, notice that
  \begin{align*} 
    |\widehat{\gamma}^2\left(\tau,e^{-\frac{\tau}4}\tfrac{\xi}2\right)-\widehat{\gamma_\infty}^2\left(e^{-\frac{\tau}4}\tfrac{\xi}2\right)|
    &\leqslant|\widehat{\gamma}\left(\tau,e^{-\frac{\tau}4}\tfrac{\xi}2 \right)+\widehat{\gamma_\infty}\left(e^{-\frac{\tau}4}\tfrac{\xi}2 \right)||\widehat{\gamma}\left(\tau,e^{-\frac{\tau}4}\tfrac{\xi}2\right)-\widehat{\gamma_\infty}\left(e^{-\frac{\tau}4}\tfrac{\xi}2\right)|\\
    &\leqslant2|e^{-\frac{\tau}4}\tfrac{\xi}2|^sd_s(\gamma,\gamma_\infty)(\tau)=2^{1-s}e^{-s\frac{\tau}4}|\xi|^sd_s(\gamma,\gamma_\infty)(\tau)\,
  \end{align*}
  since~$\|\widehat{\gamma}\|_\infty\leqslant1$ and~$\|\widehat{\gamma_\infty}\|_\infty\leqslant1$ ($\gamma$ and~$\gamma_\infty$ are probability measures). 
  
\medskip

  Besides, we have from~\eqref{assum_R_evolution}
  \[|R(\tau,e^{-\frac{\tau}4}\xi)|\leqslant e^{-\frac{s\tau}4}|\xi|^sLe^{-c\tau}. \]
Therefore, we obtain the estimate on the last term in formula~\eqref{DuhamelR}
  \[\int_0^te^\tau R\left(\tau,e^{-\frac{\tau}4}\xi\right)\d \tau\leqslant L|\xi|^s\int_0^te^{(1-\frac{s}4-c)\tau}\d \tau .\]

  Now, by subtracting~\eqref{DuhamelStationary} from~\eqref{DuhamelR} and dividing by~$|\xi|^s$, we obtain
  \begin{equation*} 
    e^t\frac{\left|\widehat{\gamma}\left(t,e^{-\frac{t}4}\xi\right)-\widehat{\gamma_\infty}\left(e^{-\frac{t}4}\xi \right)\right|}{|\xi|^s}\leqslant d_s(\gamma_0,\gamma_\infty)+2^{1-s}\int_0^te^{(1-\frac{s}4)\tau}d_s(\gamma,\gamma_\infty)(\tau)\d \tau+ L\int_0^te^{(1-\frac{s}4-c)\tau}\d \tau.
  \end{equation*}
  Since we assumed that~$f^0$ has finite moment up to order 4 and 
  also~$\gamma_\infty$ up to order~$s$, the distance~$d_s(\gamma (t,\cdot), \gamma_\infty)$ is finite, then we can take the supremum on~$\xi\neq0$ in the estimate above to finally obtain
  \begin{equation}\label{gronwall-ds}
    e^{(1-\frac{s}4)t}d_s(\gamma,\gamma_\infty)(t)\leqslant d_s(\gamma_0,\gamma_\infty)+2^{1-s}\int_0^te^{(1-\frac{s}4)\tau}d_s(\gamma,\gamma_\infty)(\tau)\d \tau+  L\int_0^te^{(1-\frac{s}4-c)\tau}\d \tau.
  \end{equation}
  This Grönwall estimate can be solved classically by setting
  \[z(t)=e^{-2^{1-s}t}\Big(d_s(\gamma_0,\gamma_\infty)+2^{1-s}\int_0^te^{(1-\frac{s}4)\tau}d_s(\gamma,\gamma_\infty)(\tau)\d \tau\Big),\]
  thus we obtain
  \[z'(t)=2^{1-s}e^{-2^{1-s}t}e^{(1-\frac{s}4)t}d_s(\gamma,\gamma_\infty)(t)-2^{1-s}z(t)\leqslant2^{1-s}Le^{-2^{1-s}t}\int_0^te^{(1-\frac{s}4-c)\tau}\d \tau,\]
  which gives, since~$z(0)=d_s(\gamma_0,\gamma_\infty)$,
\begin{equation}\label{estimate_z}
  z(t)\leqslant d_s(\gamma_0,\gamma_\infty)+L\left(-e^{-2^{1-s}t}\int_0^te^{(1-\frac{s}4-c)\tau}\d \tau + \int_0^te^{(\lambda_s-c)\tau}\d \tau\right).
\end{equation}
  
  Getting back to~\eqref{gronwall-ds}, we obtain
  \begin{equation*}
    d_s(\gamma,\gamma_\infty)(t) \leqslant e^{-(1-\frac{s}4)t}\Big(e^{2^{1-s}t}z(t) + L\int_0^te^{(1-\frac{s}4-c)\tau}\d \tau\Big)
  \end{equation*}
and using~\eqref{estimate_z}, we get
  \begin{multline*}
    d_s(\gamma,\gamma_\infty)(t)\leqslant d_s(\gamma_0,\gamma_\infty)e^{-\lambda_s t} + L\left( e^{-\lambda_s t}\int_0^te^{(\lambda_s-c)\tau}\d \tau - e^{-(1-\frac{s}4)t}\int_0^te^{(1-\frac{s}4-c)\tau}\d \tau\right) \\
    +L e^{-(1-\frac{s}4)t}\int_0^te^{(1-\frac{s}4-c)\tau} \d \tau.
  \end{multline*}
Hence we have 
\begin{equation*}
  d_s(\gamma,\gamma_\infty)(t)\leqslant d_s(\gamma_0,\gamma_\infty)e^{-\lambda_s t} + Le^{-\lambda_s t}\int_0^te^{(\lambda_s-c)\tau}\d \tau,
\end{equation*}
which gives the result stated in Proposition~\ref{prop-convergence-gamma}.
\end{proof}

Next, we establish some estimates on~$R(t,\xi)$ and the moments of~$g$ that will help to derive an estimate of 
the type~\eqref{assum_R_evolution}. From now on, we use the notation 
\begin{equation*}
  \widehat{\phi}'(t,\xi):= \frac{\partial \widehat{\phi}}{{\partial \xi}}(t,\xi), \quad 
  \widehat{\phi}''(t,\xi):= \frac{\partial^2 \widehat{\phi}}{{\partial \xi^2}}(t,\xi), \quad 
  \widehat{\phi}'''(t,\xi):= \frac{\partial^3 \widehat{\phi}}{{\partial \xi^3}}(t,\xi),
\end{equation*}
for the sake of simplicity. We will also omit the dependency on time in the proofs 
of the two next stated results, since all the estimates are derived at a fixed time~$t$. We will resume 
the dependency on time when we prove Theorem~\ref{th-cv-gamma}.

The lemma below provides some useful estimates on~$R$.
\begin{lemma}\label{lemma-estimates-Rgammahat}
  We fix~$t\geqslant0$. For all~$\xi\in\mathbb{R}$, we have the two following estimates:
  \begin{align}
    \label{estimate-Rgammahat-xi2}
    |R(t,\xi)|&\leqslant(\tfrac12\|\widehat{\varphi}(t,\cdot)\|_\infty+\|\widehat{\varphi}'(t,\cdot)\|_\infty+\|\widehat{\varphi}''(t,\cdot)\|_\infty)|\xi|^2,\\
    \label{estimate-Rgammahat-xi3}   |R(t,\xi)|&\leqslant\big(\tfrac1{12}(\|\widehat{\varphi}(t,\cdot)\|_\infty+3\|\widehat{\varphi}''(t,\cdot)\|_\infty)\|\widehat{\gamma}'''(t,\cdot)\|_\infty+\tfrac12\|\widehat{\varphi}'(t,\cdot)\|_\infty+\tfrac16\|\widehat{\varphi}'''(t,\cdot)\|_\infty\big)|\xi|^3.                      
  \end{align}
\end{lemma}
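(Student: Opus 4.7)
The plan begins by observing that the three coefficients appearing in~\eqref{eq-defR} are exactly the values $\widehat{\varphi}(t,0)$, $\widehat{\varphi}'(t,0)$ and $\widehat{\varphi}''(t,0)$. Indeed, performing the change of variables $y=\sqrt{M_2(t)}\,x+\overline{x}(t)$ in the definition~\eqref{eq-def-eps} of $\varphi$ and recalling~\eqref{def-Sk} gives
\begin{equation*}
\widehat{\varphi}(t,0)=S_0(t),\qquad \widehat{\varphi}'(t,0)=-i\,\frac{S_1(t)}{\sqrt{M_2(t)}},\qquad \widehat{\varphi}''(t,0)=-\frac{S_2(t)}{M_2(t)}.
\end{equation*}
Substituting into~\eqref{eq-defR} (and dropping the $t$ for readability), $R$ takes the form
\begin{equation*}
R(\xi)=-\widehat{\varphi}(\xi)+\widehat{\varphi}(0)\widehat{\gamma}(\xi)+\widehat{\varphi}'(0)\,\xi\,\widehat{\gamma}(\xi)-\tfrac{1}{2}\bigl(\widehat{\varphi}''(0)+\widehat{\varphi}(0)\bigr)\xi\,\widehat{\gamma}'(\xi).
\end{equation*}

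The next step is to add and subtract a Taylor polynomial of $\widehat{\varphi}$ at $0$ so as to expose four pieces, each of which vanishes at $\xi=0$ to the required order. Using $P_1(\xi)=\widehat{\varphi}(0)+\widehat{\varphi}'(0)\xi$ and regrouping yields
\begin{align*}
R(\xi)=-\bigl[\widehat{\varphi}(\xi)-P_1(\xi)\bigr]&+\widehat{\varphi}(0)\bigl[\widehat{\gamma}(\xi)-1-\tfrac{1}{2}\xi\widehat{\gamma}'(\xi)\bigr]\\
&+\widehat{\varphi}'(0)\,\xi\bigl[\widehat{\gamma}(\xi)-1\bigr]-\tfrac{1}{2}\widehat{\varphi}''(0)\,\xi\,\widehat{\gamma}'(\xi),
\end{align*}
while using instead $P_2(\xi)=P_1(\xi)+\tfrac{1}{2}\widehat{\varphi}''(0)\xi^2$ transfers an extra $-\tfrac{1}{2}\widehat{\varphi}''(0)\xi^2$ into the last bracket, which becomes $-\tfrac{1}{2}\widehat{\varphi}''(0)\xi[\widehat{\gamma}'(\xi)+\xi]$. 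I will use the first decomposition to prove~\eqref{estimate-Rgammahat-xi2} and the second to prove~\eqref{estimate-Rgammahat-xi3}. The moment constraints~\eqref{momentsgamma} imply $\widehat{\gamma}(0)=1$, $\widehat{\gamma}'(0)=0$, $\widehat{\gamma}''(0)=-1$ and $\|\widehat{\gamma}'\|_\infty,\|\widehat{\gamma}''\|_\infty\leqslant1$, which makes the vanishing at $\xi=0$ of each bracket effective.

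It then remains to apply Taylor's theorem with integral remainder to each bracket. For~\eqref{estimate-Rgammahat-xi2} the only slightly nontrivial bracket is the second one, which I would control through the identity
\begin{equation*}
\widehat{\gamma}(\xi)-1-\tfrac{1}{2}\xi\widehat{\gamma}'(\xi)=\int_0^\xi\bigl(\tfrac{\xi}{2}-\eta\bigr)\widehat{\gamma}''(\eta)\,\d\eta,
\end{equation*}
bounded by $\tfrac{|\xi|^2}{4}\|\widehat{\gamma}''\|_\infty\leqslant\tfrac{|\xi|^2}{4}$; the other three brackets contribute $\tfrac{|\xi|^2}{2}\|\widehat{\varphi}''\|_\infty$ (Taylor on $\widehat{\varphi}$), $|\xi|^2\|\widehat{\varphi}'\|_\infty$ (via $|\widehat{\gamma}(\xi)-1|\leqslant|\xi|$), and $\tfrac{|\xi|^2}{2}\|\widehat{\varphi}''\|_\infty$ (via $\widehat{\gamma}'(0)=0$), and the sum is majorized by the claimed right-hand side. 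For~\eqref{estimate-Rgammahat-xi3} the heart of the argument is the refined identity
\begin{equation*}
\widehat{\gamma}(\xi)-1-\tfrac{1}{2}\xi\widehat{\gamma}'(\xi)=\int_0^\xi\tfrac{(\xi-\eta)(-\eta)}{2}\,\widehat{\gamma}'''(\eta)\,\d\eta,
\end{equation*}
obtained by combining the second-order Taylor remainders of $\widehat{\gamma}$ and $\widehat{\gamma}'$; this produces the coefficient $\tfrac{1}{12}$ in the $\|\widehat{\varphi}\|_\infty\|\widehat{\gamma}'''\|_\infty$ term. The modified last bracket is handled by the analogous Taylor estimate $|\widehat{\gamma}'(\xi)+\xi|=|\widehat{\gamma}'(\xi)-\widehat{\gamma}'(0)-\widehat{\gamma}''(0)\xi|\leqslant\tfrac{|\xi|^2}{2}\|\widehat{\gamma}'''\|_\infty$, while the first and third brackets are routine. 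Summing the four contributions reproduces exactly~\eqref{estimate-Rgammahat-xi3}. The only conceptual step in the whole argument is the identification of the $S_k$'s as Taylor coefficients of $\widehat{\varphi}$ at the origin; everything else reduces to book-keeping of Taylor remainders.
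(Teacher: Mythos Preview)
Your proof is correct and rests on the same conceptual step as the paper's: the identification of $S_0$, $S_1/\sqrt{M_2}$, $S_2/M_2$ with the values $\widehat{\varphi}(0)$, $i\widehat{\varphi}'(0)$, $-\widehat{\varphi}''(0)$, after which everything reduces to exploiting that $R$ vanishes at~$\xi=0$ to order two (resp.\ three).

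The organization differs, though. The paper differentiates $R$ directly, checks that $R(0)=R'(0)=0$ (and $R''(0)=0$), bounds $|R'|$ (resp.\ $|R''|$) by a constant times~$|\xi|$, and then integrates once (resp.\ twice). You instead add and subtract a Taylor polynomial of $\widehat{\varphi}$ at the origin to split $R$ into four brackets, each with a manifest vanishing order, and bound each bracket separately via its own Taylor remainder. Your two special identities
\[
\widehat{\gamma}(\xi)-1-\tfrac12\xi\widehat{\gamma}'(\xi)=\int_0^\xi\bigl(\tfrac{\xi}{2}-\eta\bigr)\widehat{\gamma}''(\eta)\,\d\eta=\int_0^\xi\tfrac{(\xi-\eta)(-\eta)}{2}\widehat{\gamma}'''(\eta)\,\d\eta
\]
are the counterpart of the paper's computation of $R'$ and $R''$; they package the same cancellations in a slightly more structural way. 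Both routes land exactly on the constants~\eqref{estimate-Rgammahat-xi2}--\eqref{estimate-Rgammahat-xi3} (your $|\xi|^2$ bookkeeping in fact gives the sharper coefficient~$\tfrac14$ in front of~$\|\widehat{\varphi}\|_\infty$, which you then majorize by~$\tfrac12$). Neither approach requires more regularity than the other: third derivatives of $\widehat{\gamma}$ and $\widehat{\varphi}$ suffice in both.
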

\begin{proof} From the definition~\eqref{eq-defR} of~$R$, we obtain 
  \begin{equation}\label{Rgammahat} 
    R(t,\xi)=S_0(t)\widehat{\gamma}(t,\xi)-\widehat{\varphi}(t,\xi)-\frac12\left(S_0(t)-\frac{S_2(t)}{M_2(t)}\right)\xi\widehat{\gamma}'(t,\xi)-i\frac{S_1(t)}{\sqrt{M_2(t)}}\xi\widehat{\gamma}(t,\xi).
  \end{equation}

  Since~$\gamma(t,\cdot)$ is a centered probability density with second moment~$1$, we get
  \begin{equation*}
    \widehat{\gamma}(t,0)=1,\quad  \widehat{\gamma}'(t,0)=0, \quad  \widehat{\gamma}''(t,0)=-1.
  \end{equation*}
  Furthermore, for all~$\xi$, we have
  \begin{equation}\label{est-gammahat-second}
    |\widehat{\gamma}(t,\xi)|\leqslant\int_\mathbb{R}\gamma(x)\d x=1,\quad |\widehat{\gamma}''(t,\xi)|=|-\widehat{x^2\gamma}(t,\xi)|\leqslant\int_\mathbb{R}|x|^2\gamma(t,x)\d x=1.
  \end{equation}
  We also have, by Cauchy-Schwarz inequality,
  \begin{equation}\label{est-gammahat-prime}
    |\widehat{\gamma}'(t,\xi)|=|-i\widehat{x\gamma}(t,\xi)|\leqslant\int_\mathbb{R}|x|\gamma(t,x)\d x\leqslant1.
  \end{equation}

  Now, a simple change of variable reads, for~$k \in \mathbb{N}$,
  \begin{equation*}
    \frac{S_k}{\sqrt{M_2}^k}=\int_\mathbb{R}x^k\varphi(x)\d x=i^k\widehat{\varphi}^{(k)}(0),
  \end{equation*}
  (where the dependency on time~$t$ is omitted) and therefore, the expression~\eqref{Rgammahat} of~$R(t,\xi)$ can be written under the form
  \begin{equation*}
    R(t,\xi)=\widehat{\varphi}(0)\widehat{\gamma}(\xi)-\widehat{\varphi}(\xi)+\Big(\widehat{\varphi}'(0)\widehat{\gamma}(\xi)-\tfrac12\big(\widehat{\varphi}(0)+\widehat{\varphi}''(0)\big)\widehat{\gamma}'(\xi)\Big)\xi.
  \end{equation*}
  From this, we easily remark that~$R(0)=0$ and we obtain
  \begin{equation}\label{eq-Rgammahat1} \begin{split}
    R'(t,\xi)=\widehat{\varphi}'(0)\widehat{\gamma}(\xi)-\widehat{\varphi}'(\xi)&+\tfrac12\big(\widehat{\varphi}(0)-\widehat{\varphi}''(0)\big)\widehat{\gamma}'(\xi) \\
      &+ \Big(\widehat{\varphi}'(0)\widehat{\gamma}'(\xi)-\tfrac12\big(\widehat{\varphi}(0)+\widehat{\varphi}''(0)\big)\widehat{\gamma}''(\xi)\Big)\xi.
    \end{split}
  \end{equation}
Also, we remark once again that~$R'(t,0)=0$ (since~$\widehat{\gamma}'(0)=0$), and we also have from~\eqref{eq-Rgammahat1}
\begin{equation*}
  \begin{split}
    R'(t,\xi)=\int_0^{\xi}\Big(\widehat{\varphi}'(0)\widehat{\gamma}'(\eta) &-\widehat{\varphi}''(\eta)+\tfrac12\big(\widehat{\varphi}(0)-\widehat{\varphi}''(0)\big) \widehat{\gamma}''(\eta)\Big) \d \eta \\
      &+ \Big(\widehat{\varphi}'(0)\widehat{\gamma}'(\xi)-\tfrac12\big(\widehat{\varphi}(0)+\widehat{\varphi}''(0)\big)\widehat{\gamma}''(\xi)\Big)\xi.
    \end{split}
  \end{equation*}

  Using~\eqref{est-gammahat-second}-\eqref{est-gammahat-prime}, we obtain a first estimate on~$R'$ 
  \[|R'(t,\xi)|\leqslant(\|\widehat{\varphi}\|_\infty+2\|\widehat{\varphi}'\|_\infty+2\|\widehat{\varphi}''\|_\infty)|\xi|,\]
  and since~$R(t,0)=0$, by integration we get the estimation~\eqref{estimate-Rgammahat-xi2}.

\bigskip

  Similarly, we compute the second derivative of~$R$ and we obtain
  \begin{equation*}
    \begin{split}
    R''(t,\xi)=2\widehat{\varphi}'(0)\widehat{\gamma}'(\xi)&-\widehat{\varphi}''(\xi)-\widehat{\varphi}''(0)\widehat{\gamma}''(\xi)\\
      &+ \Big(\widehat{\varphi}'(0)\widehat{\gamma}''(\xi)-\tfrac12\big(\widehat{\varphi}(0)+\widehat{\varphi}''(0)\big)\widehat{\gamma}'''(\xi)\Big)\xi.
    \end{split}
  \end{equation*}
  From this, we remark once more that~$R''(t,0)=0$ (since~$\widehat{\gamma}''(0)=-1$), and we also have
  \begin{equation*}
    \begin{split}
    R''(t,\xi)=\int_0^{\xi}\big(2\widehat{\varphi}'(0)\widehat{\gamma}''(\eta)&-\widehat{\varphi}'''(\eta)-\widehat{\varphi}''(0)\widehat{\gamma}'''(\eta)\big) \d \eta \\
      &+ \Big(\widehat{\varphi}'(0)\widehat{\gamma}''(\xi)-\tfrac12\big(\widehat{\varphi}(0)+\widehat{\varphi}''(0)\big)\widehat{\gamma}'''(\xi)\Big)\xi.
    \end{split}
  \end{equation*}
  Using again~\eqref{est-gammahat-second}-\eqref{est-gammahat-prime}, we obtain
  \[|R''(t,\xi)|\leqslant\big(\tfrac12(\|\widehat{\varphi}\|_\infty+3\|\widehat{\varphi}''\|_\infty)\|\widehat{\gamma}'''\|_\infty+3\|\widehat{\varphi}'\|_\infty+\|\widehat{\varphi}'''\|_\infty)|\xi|,\]
  and since~$R'(t,0)=R(t,0)=0$, by two successive integrations we get the estimation~\eqref{estimate-Rgammahat-xi3} and this ends the proof.
\end{proof}
Next, we prove estimates on the derivatives of~$\varphi$ with the moments~$M_k$, using the results of 
Section~\ref{section-moments}. Once more, we fix~$t\geqslant0$ and omit the dependency on~$t$ in the notations, the norm~$\|\cdot\|_\infty$ being the supremum with respect to the Fourier variable~$\xi$ only.

\begin{proposition}\label{prop-estimates-eps-lip} We have
  \begin{equation}\label{est-gammahat-third}
    \|\widehat{\gamma}'''\|_\infty\leqslant\frac{\sqrt{M_4}}{M_2}, 
  \end{equation}
and if~$m$ satisfies Assumption~\ref{assumption-local-lipschitz}, we have (with~$\alpha$ and~$\beta$ given by Lemma~\ref{lemma-assumption-m}) :
  \begin{align*}
    \|\widehat{\varphi}^{(k)}\|_\infty&\leqslant \alpha(\overline{x})\sqrt{\frac{M_4}{M_2}}+\beta\frac{M_4}{M_2},&\text{for }0\leqslant k\leqslant2, \\
    \|\widehat{\varphi}^{(k)}\|_\infty&\leqslant\alpha(\overline{x})\frac{M_4}{M_2^{\frac32}}+\beta\frac{\sqrt{M_4M_6}}{M_2^{\frac32}},&\text{for }0\leqslant k\leqslant3.
  \end{align*}
\end{proposition}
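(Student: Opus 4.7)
The strategy is to express all derivatives as Fourier transforms of polynomial-weighted densities, revert the definitions of $\gamma$ and $\varphi$ to the original variable $y$ via the change $x=(y-\overline{x})/\sqrt{M_2}$, and then control the resulting integrals by the even centered moments $M_k$ through Cauchy--Schwarz. The technical input from the selection rate comes exclusively through Lemma~\ref{lemma-assumption-m}, which turns $|m(y)-m(\overline{x})|$ into a sum of two monomials $\alpha(\overline{x})|y-\overline{x}|+\beta|y-\overline{x}|^2$; the proof then amounts to tracking how these two monomials combine with the factor $|y-\overline{x}|^k$ that comes from the $k$-th derivative, and then absorbing the resulting $M_j$ with $j$ odd using Cauchy--Schwarz.

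For the first estimate, I would start from $\widehat{\gamma}'''(\xi)=-i\widehat{x^3\gamma}(\xi)$, whence
\[
|\widehat{\gamma}'''(\xi)|\leqslant\int_{\mathbb{R}}|x|^3\gamma(x)\,\d x = \frac1{M_2^{3/2}}\int_{\mathbb{R}}|y-\overline{x}|^3 g(y)\,\d y,
\]
after performing the change of variable using~\eqref{def-gamma}. Cauchy--Schwarz on $|y-\overline{x}|^3 = |y-\overline{x}|\cdot|y-\overline{x}|^2$ bounds the last integral by $\sqrt{M_2 M_4}$, which gives $\sqrt{M_4}/M_2$.

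For the $\varphi$ bounds, the same change of variable turns the Fourier transform into
\[
\widehat{\varphi}(\xi)=\int_{\mathbb{R}} e^{-i\xi (y-\overline{x})/\sqrt{M_2}}\bigl(m(y)-m(\overline{x})\bigr)g(y)\,\d y,
\]
so that $\widehat{\varphi}^{(k)}(\xi)$ picks up a factor $(-i(y-\overline{x})/\sqrt{M_2})^k$ under the integral. Taking absolute values and applying Lemma~\ref{lemma-assumption-m} yields
\[
\|\widehat{\varphi}^{(k)}\|_\infty \leqslant \frac1{M_2^{k/2}}\Bigl(\alpha(\overline{x})\!\int|y-\overline{x}|^{k+1}g(y)\,\d y+\beta\!\int|y-\overline{x}|^{k+2}g(y)\,\d y\Bigr).
\]
For the first set of bounds (valid for $0\leqslant k\leqslant 2$), I would handle the extremal case $k=2$ directly: the odd moment $\int|y-\overline{x}|^3g$ is bounded by $\sqrt{M_2M_4}$ via Cauchy--Schwarz, which produces exactly $\alpha(\overline{x})\sqrt{M_4/M_2}+\beta M_4/M_2$. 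For $k=0,1$ the analogous direct computation gives $\alpha(\overline{x})\sqrt{M_2}+\beta M_2$ and $\alpha(\overline{x})\sqrt{M_2}+\beta\sqrt{M_4}$; these are dominated by the claimed bound because $M_2\leqslant\sqrt{M_4}$ (a Cauchy--Schwarz consequence that I will invoke repeatedly).

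For the second set (valid for $0\leqslant k\leqslant 3$), the case $k=3$ is once again the sharp one: it requires $\int|y-\overline{x}|^5 g\leqslant\sqrt{M_4 M_6}$ via Cauchy--Schwarz, giving directly $\alpha(\overline{x})M_4/M_2^{3/2}+\beta\sqrt{M_4M_6}/M_2^{3/2}$. For $k\leqslant 2$ I would simply compare bounds: $\sqrt{M_4/M_2}\leqslant M_4/M_2^{3/2}$ reduces to $M_2\leqslant\sqrt{M_4}$, and $M_4/M_2\leqslant\sqrt{M_4M_6}/M_2^{3/2}$ reduces to $M_2 M_4\leqslant M_6$, which follows from the Cauchy--Schwarz inequality $M_4^2\leqslant M_2M_6$ combined with the Hölder estimate $M_2^3\leqslant M_6$. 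There is no real obstacle; the only care needed is selecting, among several possible Cauchy--Schwarz splittings, the one producing exactly the targeted combination of moments. I will present the computation for the maximal $k$ in each group and then justify the other cases by these elementary moment comparisons.
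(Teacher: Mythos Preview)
Your proof is correct and follows essentially the same route as the paper: bound Fourier derivatives by absolute moments, revert to the original variable via the scaling~\eqref{def-gamma}, invoke Lemma~\ref{lemma-assumption-m} to linearize~$|m(y)-m(\overline{x})|$, and control the resulting odd absolute moments by Cauchy--Schwarz. The only organizational difference is that the paper handles the reduction from small~$k$ to the maximal~$k$ in each group by first proving once that~$k\mapsto\int_\mathbb{R}|x|^k\gamma(x)\,\d x$ is nondecreasing for~$k\geqslant1$ (via a short Cauchy--Schwarz induction), whereas you do the equivalent comparisons case by case with inequalities like~$M_2\leqslant\sqrt{M_4}$ and~$M_2M_4\leqslant M_6$; both arguments are equivalent and equally valid.
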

\begin{proof}
  By a simple change of variable, we have
  \begin{equation*}
    \frac{M_k}{\sqrt{M_2}^k}=\int_\mathbb{R}x^k\gamma(x)\d x,
  \end{equation*}
  and in particular, we get~\eqref{est-gammahat-third} by
  \begin{equation*}
    |\widehat{\gamma}'''(\xi)|\leqslant\int_\mathbb{R}|x|^3\gamma(x)\d x\leqslant\sqrt{\int_\mathbb{R}x^2\gamma(x)\d x}\sqrt{\int_\mathbb{R}x^4\gamma(x)\d x}=\sqrt{1}\sqrt{\frac{M_4}{\sqrt{M_2}^4}}=\frac{\sqrt{M_4}}{M_2}.
  \end{equation*}

  Furthermore, the quantity~$\int_\mathbb{R}|x|^k\gamma(x)\d x$ is nondecreasing with~$k$, from~$k\geqslant1$. Indeed, by induction, thanks to~\eqref{est-gammahat-prime} we first have~$\int_\mathbb{R}|x|\gamma(x)\d x\leqslant1=\int_{\mathbb{R}}|x|^2\gamma(x)\d x$. And if for some~$k\geqslant1$, we have~$\int_\mathbb{R}|x|^{k}\gamma(x)\d x\leqslant\int_{\mathbb{R}}|x|^{k+1}\gamma(x)\d x$
  then by Cauchy-Schwarz inequality, we get:
  \begin{align*}
    \int_\mathbb{R}|x|^{k+1}\gamma(x)\d x&\leqslant\sqrt{\int_\mathbb{R}|x|^{k}\gamma(x)\d x}\sqrt{\int_\mathbb{R}|x|^{k+2}\gamma(x)\d x}\\
    &\leqslant\sqrt{\int_\mathbb{R}|x|^{k+1}\gamma(x)\d x}\sqrt{\int_\mathbb{R}|x|^{k+2}\gamma(x)\d x},
  \end{align*}
  which proves that~$\int_\mathbb{R}|x|^{k+1}\gamma(x)\d x\leqslant\int_{\mathbb{R}}|x|^{k+2}\gamma(x)\d x$.

  Now, with Lemma~\ref{lemma-assumption-m}, we obtain by the definition~\eqref{eq-def-eps} that for all~$x\in\mathbb{R}$, we have
  \begin{equation*}
    |\varphi|\leqslant[\alpha(\overline{x})\sqrt{M_2}|x|+\beta M_2|x|^2]\gamma(x).
  \end{equation*}
  Therefore we obtain
  \begin{align*}
    |\widehat{\varphi}^{(k)}(\xi)|&\leqslant\int_\mathbb{R}|x|^k|\varphi(x)|\d x\nonumber\\
                           &\leqslant\alpha(\overline{x})\sqrt{M_2}\int_{\mathbb{R}}|x|^{k+1}\gamma(x)\d x +\beta M_2\int_{\mathbb{R}}|x|^{k+2}\gamma(x)\d x,
  \end{align*}
  which the gives
  \begin{align*}
    \|\widehat{\varphi}^{(k)}\|_\infty&\leqslant \alpha(\overline{x})\sqrt{M_2}\int_{\mathbb{R}}|x|^{3}\gamma(x)\d x+\beta M_2\int_{\mathbb{R}}|x|^{4}\gamma(x)\d x&\text{for }0\leqslant k\leqslant2,\\
    \|\widehat{\varphi}^{(k)}\|_\infty&\leqslant\alpha(\overline{x})\sqrt{M_2}\int_{\mathbb{R}}|x|^{4}\gamma(x)\d x+\beta M_2\int_{\mathbb{R}}|x|^{5}\gamma(x)\d x&\text{for }0\leqslant k\leqslant3.
  \end{align*}

  We have~~$\int_{\mathbb{R}}|x|^3\gamma(x)\d x\leqslant\frac{\sqrt{M_4}}{M_2}$ thanks to~\eqref{est-gammahat-third},~$\int_{\mathbb{R}}|x|^4\gamma(x)\d x=\frac{M_4}{M_2^2}$, and finally, by Cauchy-Schwarz inequality,
  \[\int_{\mathbb{R}}|x|^5\gamma(x)\d x\leqslant\sqrt{\int_{\mathbb{R}}|x|^4\gamma(x)\d x\int_{\mathbb{R}}|x|^6\gamma(x)}\d x\leqslant\sqrt{\frac{M_4M_6}{M_2^2M_2^3}},\]
  and we therefore obtain the desired estimates.
\end{proof}

We can conclude with the proof of Theorem~\ref{th-cv-gamma}.

\begin{proof}[Proof of Theorem~\ref{th-cv-gamma}]
Combining the 
results of Lemma~\ref{lemma-estimates-Rgammahat} and 
Proposition~\ref{prop-estimates-eps-lip} gives the following 
estimates for~$|R(t,\xi)|$:
\begin{equation*}
  |R(t,\xi)|\leqslant\frac52\left(\alpha(\overline{x})\sqrt{\frac{M_4}{M_2}}+\beta\frac{M_4}{M_2}\right)|\xi|^2,
\end{equation*}
and 
\begin{align*}
  |R(t,\xi)|&\leqslant\left( \alpha(\overline{x})\frac{M_4}{M_2^{\frac{3}2}}+\beta\left(\frac13\frac{M_4^{\frac32}}{M_2^2}+\frac23\frac{\sqrt{M_4M_6}}{M_2^{\frac32}}\right)\right)|\xi|^3\nonumber\\
  &\leqslant\left( \alpha(\overline{x})\frac{M_4}{M_2^{\frac{3}2}}+\beta\frac{\sqrt{M_4M_6}}{M_2^{\frac32}}\right)|\xi|^3,
\end{align*}
with the last inequality coming from the fact that~$M_4\leqslant\sqrt{M_2M_6}$ by Cauchy-Schwarz inequality.

\medskip

Then, we fix~$\delta$ from Assumption~\ref{assumption-improvements}. By Proposition~\ref{prop-moments-decay}, assuming that~$\frac{M_{2k_0}^0}{M_2^0}$
is small enough for some~$k_0 \geqslant 3$ (and such that~$\frac{1}{2^{2k_0 - 1}} < \delta$), we have that, for each~$\omega < \min(\frac38, \delta - \frac{1}{2^{2k_0 -1}})$,
there exist constants~$C_1$ and~$C_2$ such that
\begin{equation*}
  |R(t,\xi)|\leqslant C_1e^{-\frac{\omega}{2}t}|\xi|^2,
\end{equation*}
and
\begin{equation*}
  |R(t,\xi)|\leqslant C_2e^{\left(\frac{1}{4}-\omega\right)t}|\xi|^3.
\end{equation*}

Thus, taking~$s\in (2,3)$ and defining~$c_s:=(3-s)\frac{\omega}{2} -(s-2)(\frac{1}{4}-\omega)$, we obtain 
\begin{equation*}
  |R(t,\xi)| = |R(t,\xi)|^{(3-s)+(s-2)} \leqslant C_1^{3-s} C_2^{s-2} e^{-c_st}|\xi|^s,
\end{equation*}
with~$c_s>0$ when~$s<\bar{s}:=2+\frac{\omega}{\frac{1}{2}-\omega}$.

We can distinguish two situations depending on the value of~$\omega$.
\begin{itemize}
  \item If~$\omega \geqslant \frac{1}{4}$, then we have~$c_s>0$ for all~$s\in (2,3)$. Moreover, we have that~$c_s > \lambda_s$ for each value of~$s$, which implies from Proposition~\ref{prop-convergence-gamma}
\begin{equation*}
  d_s(\gamma,\gamma_\infty)(t)\leqslant d_s(\gamma_0,\gamma_\infty)e^{-\lambda_st}+L \frac{e^{-\lambda_s t}}{c_s - \lambda_s}.
\end{equation*}
Indeed, in this case, for~$s=3$ and~$\omega=\frac14$, we have~$\lambda_3 - c_3 = 0$, and for~$s \in (2,3)$ the function~$s \mapsto \lambda_s - c_s$ is increasing (it is strictly concave and its derivative at~$s=3$ is~$\frac14(\ln 2-\frac12)>0$). Therefore for~$\omega=\frac14$ we have~$\lambda_s<c_s$ for all~$s\in(2,3)$. Since~$c_s$ is increasing with respect to~$\omega$ (and~$\lambda_s$ does not depend on~$\omega$), this provides the same result for~$\omega\geqslant\frac14$.
\item If~$\omega < \frac{1}{4}$,~$s$ must 
be in~$(2, \bar{s})$ to get~$c_s$ positive. Furthermore the function~$s \mapsto \lambda_s - c_s$ is still increasing on~$(2,3)$, since it is strictly concave and with positive derivative at~$s=3$. Since~$\lambda_3 -c_3 >0$ and~$\lambda_2-c_2<0$, we get that there is a unique value~$s_0$ for which~$\lambda_{s_0} = c_{s_0}$.
Then, Proposition~\ref{prop-convergence-gamma} can be applied. We obtain
\begin{equation*}
  d_{s_0}(\gamma,\gamma_\infty)(t)\leqslant d_{s_0}(\gamma_0,\gamma_\infty)e^{-\lambda_{s_0}t}+Lt e^{-\lambda_{s_0} t}.
\end{equation*}
And for~$s\neq s_0$, we have 
\begin{equation*}
  d_s(\gamma,\gamma_\infty)(t)\leqslant d_s(\gamma_0,\gamma_\infty)e^{-\lambda_st}+L' e^{-\min(\lambda_s,c) t},
\end{equation*}
with~$L' = \frac{L}{|\lambda_s - c|}$.
\end{itemize}

Hence the statement of Theorem~\ref{th-cv-gamma}, together with the precisions given in Remark~\ref{rk-cv-gamma}.
\end{proof}

\section*{Acknowledgements}
The authors are thankful to Florian Patout for the fruitful discussions that helped to achieve this work. A.F. wants to thank the hospitality of the LMA (Université de Poitiers) where most of this research was conducted.

\bibliographystyle{abbrv}
\bibliography{bibli.bib}

\end{document}